\documentclass{amsart}

\usepackage{amsmath,amsthm,amssymb,mathrsfs,color,enumerate,accents,amsfonts,bbold,adjustbox,etoolbox,changepage,bbm,adjustbox,witharrows,microtype}
\usepackage{thmtools}
\usepackage[margin=1in]{geometry}

\usepackage{pgfplots}
\pgfplotsset{compat=1.15}
\usepgfplotslibrary{fillbetween} 
\usepackage{tikz-cd}
\usepackage{tikz}
\usetikzlibrary{tqft,knots,decorations.markings,arrows,external,hobby,positioning}

\usepackage{graphicx,xcolor,transparent}
\graphicspath{./Pictures/}

\usepackage{hyperref}
\hypersetup{
    colorlinks=true, %set true if you want colored links
    linktoc=all,     %set to all if you want both sections and subsections linked
    linkcolor=blue,  %choose some color if you want links to stand out
}

%Theorem Environments

\newtheorem{thm}{Theorem}[section]
\newtheorem*{thm*}{Theorem}
\newtheorem{cor}[thm]{Corollary}
\newtheorem{lem}[thm]{Lemma}
\newtheorem{prop}[thm]{Proposition}

\theoremstyle{definition}
\newtheorem{dff}[thm]{Definition}
\newtheorem{xmp}[thm]{Example}
\newtheorem{rmk}[thm]{Remark}

\newcommand{\mbb}[1]{\mathbb{#1}}
\newcommand{\mcal}[1]{\mathcal{#1}}

\DeclareMathOperator{\Endo}{End}

\definecolor{darkgreen}{RGB}{26,190,26}

\newtheorem{mainthmintro}{Theorem}
\newtheorem{mainthm}{Theorem}

\title{Invariants from involutory Hopf algebras of 3-manifolds with embedded framed links}

\author{Nicolas Bridges}
\address{Department of Mathematics \\ 150 N University St \\ Purdue University \\ West Lafayette, IN \\ 47907}
\email{bridge18@purdue.edu}

\author{Shawn X. Cui}
\address{Department of Mathematics and Department of Physics and Astronomy \\ 150 N University St \\ Purdue University \\ West Lafayette, IN \\ 47907}
\email{cui177@purdue.edu}

\date{\today}

\begin{document}

\begin{abstract}
    We give invariants of pairs $(M,L)$ consisting of a closed connected oriented three-manifold and an (oriented) framed link $L$ embedded in $M$. This invariant generalizes the Kuperberg and Hennings-Kauffman-Radford (HKR) invariants of three-manifolds. We define Heegaard-Link diagrams which represent the pair $(M,L)$ and use the data of an involutory Hopf algebra and a representation of the Drinfeld double to construct the invariant. We show that if $L$ is the empty link, then the invariant recovers the Kuperberg invariant, and if $M$ is the three-sphere and certain particular representation is chosen, then the invariant recovers the HKR invariant. We also show that if the representation is the left regular representation of the Drinfeld double, then we recover the Kuperberg invariant of the surgery manifold $M(L)$, contributing to a new proof of the relationship between the HKR and Kuperberg invariants in the semisimple setting. To this end, we give a Heegaard diagram for $M(L)$ coming from the Heegaard-Link diagram representing the pair $(M,L)$. We also introduce a colored link invariant extending the construction and show it recovers the Reshetikhin-Turaev colored link invariant.
\end{abstract}

\maketitle

\section{Introduction}
\label{Introduction_Sec}
Hopf algebras are rich algebraic objects which can be used to define quantum invariants of 3-manifolds. Well-known invariants of this type are the Kuperberg \cite{Kup91,kuperberg1997non} invariant $Z_{\mathrm{Kup}}$ and the Hennings\footnote{Also known as the Hennings-Kauffman-Radford (HKR) invariant} \cite{hen96,kauffman1995invariants} invariant $Z_{\mathrm{HKR}}$. These are defined on Heegaard diagrams and surgery link diagrams, respectively. Kuperberg's construction uses a finite-dimensional Hopf algebra, while the Hennings invariant requires a finite-dimensional ribbon Hopf algebra. Since the Drinfeld double of a unimodular Hopf algebra has a natural structure of a ribbon Hopf algebra, then it is a natural question to ask about the relationship between these two invariants.  It was first speculated in \cite{kuperberg1997non} and then conjectured explicitly in \cite{kerler2021genealogy} that $Z_{\mathrm{Kup}}$ based on a Hopf algebra $H$ equals $Z_{\mathrm{HKR}}$ based on the Drinfeld double of the Hopf algebra \footnote{The conjecture stated did not involve framings, so it only covered the case of involutory Hopf algebras.}. A verification of the conjecture in the involutory (semisimple) case has been explored extensively \cite{CHEN_KUPPUM_SRINIVASAN_2009,chang2016newproofbfzkupzhenn2semisimple,Chang2013-ae,chenrestrictedqgroups,Kerler_2003}. Then, Chang and Cui proved an extension of the conjecture, showing the equivalence of the two invariants in the non-involutory case \cite{CHANG2019621} when  framings of 3-manifolds are involved. 

The invariants mentioned for involutory Hopf algebras are special cases of invariants arising from two extensively studied topological quantum field theories (TQFTs), namely, the Witten-Reshetikhin-Turaev invariant $Z_{\mathrm{WRT}}$ \cite{Reshetikhin1991-ib,Turaev1992-rh,Turaev2016-ai} and the Turaev-Viro-Barrett-Westbury invariant $Z_{\mathrm{TVBW}}$ \cite{Barrett:1993ab,TURAEV1992865} which are defined using modular tensor categories and spherical fusion categories, respectively. It is a classic result built on a series of works \cite{walker1991witten, roberts1995skein, turaev2006quantum,Turaev2010-dh} that \[Z_{\mathrm{TVBW}}(X;\mcal C) = Z_{\mathrm{WRT}}(X;\mcal Z(\mcal C))\] for a spherical fusion category $\mcal C$, where $\mcal Z(\mcal C)$ is the Drinfeld center of the category $\mcal C$. In particular, if $\mcal C$ is the representation category of an involutory Hopf algebra $H$, then $\mcal Z(\mcal C)$ is ribbon equivalent to the representation category of the Drinfeld double $D(H)$.

In this paper, we introduce a family of invariants,
 \[Z_{\mathrm{exK}}((M,L);H, (\rho, T))\] 
 dubbed \emph{extended Kuperberg invariants}, that generalize both the Kuperberg and the Hennings invariants in the semisimple case. The input data  consist of a finite-dimensional involutory Hopf algebra $H$, a finite-dimensional representation $\rho: D(H) \to \Endo(V)$ of $D(H)$, and a trace function $T$ on $\text{Im}(\rho)$. The invariant is defined for a pair $(M,L )$ of  a closed 3-manifold $M$ with an embedded oriented framed link $L$. We diagrammatically represent these pairs up to diffeomorphism via \emph{Heegaard-Link diagrams}, which are Heegaard diagrams with an additional set of embedded curves which represent the framed link. The construction of the invariant mimics that of Kuperberg's original construction. Namely, to each Heegaard circle, we associate an iterated multiplication/co-multiplication tensor, and contract along crossings. However, we also assign tensors associated to the representation $V$ of $D(H)$ to link components and contract tensors along the crossings between Heegaard circles and links using the representation $\rho$. The invariant is independent of the orientations of the link if the trace function $T$ is antipode-invariant.
 
 \begin{mainthmintro}\label{intro:thm1}
 With the notation stated above, $Z_{\mathrm{exK}}((M,L);H, (\rho, T))$ is an invariant of the diffeomorphism class of the pair $(M,L)$.
 \end{mainthmintro}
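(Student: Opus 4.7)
The plan is to prove invariance move-by-move: first establish a generating set of local moves on Heegaard-Link diagrams that connects any two diagrams representing diffeomorphic pairs $(M,L)$, and then verify that each contraction $Z_{exK}$ is preserved under every such move. This follows Kuperberg's original strategy for his invariant and reduces the topological statement to a collection of algebraic identities in $H$ and in the representation $(\rho,T)$ of $D(H)$.

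The first step is a Reidemeister-type classification theorem for Heegaard-Link diagrams. Up to ambient isotopy on the Heegaard surface, the moves should split into three families: (i) the classical Heegaard moves used by Kuperberg (stabilization, handle slides of one Heegaard circle over another, isotopy) that involve no link curves; (ii) link-only moves, namely isotopies of the link curves on the surface, together with framing and orientation moves; and (iii) interaction moves, the most important of which is sliding a link strand across (or over) a Heegaard circle, i.e., pushing a link arc through a handle of the Heegaard splitting. I would derive this move list by a standard handle-decomposition argument: starting from the pair $(M,L)$, different handle decompositions adapted to $L$ differ by cancellations, slides, and re-orderings of handles, each of which translates into one of the moves above.

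For the verification, family (i) is handled by Kuperberg's original arguments, which use exactly the axioms of a finite-dimensional involutory Hopf algebra (associativity, coassociativity, antipode and counit axioms, plus unimodularity/cointegral normalization to handle stabilization). Family (ii) reduces to checking that the tensor assigned to a link component is cyclically and antipode-invariant: cyclicity of $T$ on $\mathrm{Im}(\rho)$ gives isotopy invariance along the component, antipode-invariance of $T$ gives the orientation independence announced in the introduction, and the ribbon element of $D(H)$ (present because the Drinfeld double of a unimodular Hopf algebra is ribbon) handles the framing move. Family (iii) is the heart of the matter and is where the Drinfeld double structure is essential: the algebraic content of pushing a link strand through a handle is exactly the identity that the two possible ways of inserting the $H$-action and the $H^*$-coaction on the strand agree, and this is precisely the compatibility encoded by the $R$-matrix of $D(H)$, i.e., the statement that $\rho$ is a representation of $D(H)$ rather than of $H$ and $H^*$ separately.

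The main obstacle will be the interaction move in family (iii). Implementing it rigorously requires a careful local-picture analysis: one must fix conventions for how a link crossing over a Heegaard circle produces a contraction of the $\rho$-tensor with the iterated comultiplication tensor at the Heegaard circle, and then show that the two resulting contractions (pushing through the ``upper'' versus the ``lower'' handle) coincide. Reducing this to the Yang--Baxter-type identity satisfied by the universal $R$-matrix of $D(H)$, together with the trace property of $T$, is where the bulk of the diagrammatic bookkeeping lies. Once this move is established, routine checks for the framing change (via the ribbon element) and for orientation reversal (via antipode-invariance of $T$) complete the proof that $Z_{exK}((M,L);H,(\rho,T))$ depends only on the diffeomorphism class of $(M,L)$.
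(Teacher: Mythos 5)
Your overall architecture is the same as the paper's: establish a completeness theorem for moves on Heegaard-Link diagrams (the paper's Theorem \ref{thm:extendedHeegaardMoves}: diagram isotopy, handle slides, and (de)stabilization away from the link), then verify the contraction is unchanged move by move, with the fact that $\rho$ is a representation of $D(H)$ --- not merely of $H$ and $H^*$ separately --- carrying the weight for the link/Heegaard-circle interactions. Two points in your family (ii) and (iii), however, need correction. First, there is no ``framing move'' and no ribbon element anywhere in the invariance proof. In this construction the link is \emph{embedded} in the Heegaard surface and carries the blackboard framing; Lemma \ref{embedknotinsurface_lemma} arranges that this agrees with the prescribed framing, and every move in the complete list manifestly preserves the blackboard framing (the paper checks this explicitly for slides across the thickened disks and for isotopies pushed into the attached balls). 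A kink or framing twist is not a move between diagrams of the same pair, so nothing needs to be absorbed by a ribbon element --- and indeed $Z_{exK}$ as defined has no ribbon insertion that could absorb one, so attempting to verify such a move would fail. Second, the algebraic identity underlying the interaction move is not the Yang--Baxter equation for $R^D$ but the cross-relations of the Drinfeld double, i.e.\ the straightening rule $\rho(xf)=\sum\rho_{H^*}(f')\rho_H(x')$ packaged as Lemma \ref{lem:doubleequalities}; your alternative phrasing (``$\rho$ is a representation of $D(H)$'') is the correct one. Finally, note that reducing a surface isotopy to finitely many checkable local moves is itself a step: the paper invokes the bigon criterion to reduce to two-point and three-point moves, the former verified by the antipode axiom and the latter by Lemma \ref{lem:doubleequalities}; your plan should make this reduction explicit, and should also include the base-point and Heegaard-circle-orientation checks (cyclicity of $e$, $\mu$, $T$ and antipode-invariance of the integrals).
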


When the link $L$ is empty, $Z_{\mathrm{exK}}$ is trivially equal to the involutory Kuperberg invariant $Z_{\mathrm{Kup}}$. The following two theorems demonstrate the relation between the Hennings invariant $Z_{\mathrm{HKR}}$ and $Z_{\mathrm{exK}}$ for various choices of representations. 

\begin{mainthmintro}\label{intro:thm2}
Let $H$ be an involutory Hopf algebra with a nonzero two-sided integral $\mu$ and nonzero two-sided cointegral $e$ such that $\mu(e) =1$. Denote by $\rho_{\mathrm{reg}}$ the left regular representation of $D(H)$ and choose $\widetilde{T} = \mathrm{Tr}/\mathrm{dim}(H)$ as the trace function, where $\mathrm{Tr}$ is the usual trace of endomorphisms. Then for an unoriented framed link $L$ embedded in $S^3$, \[Z_{\mathrm{exK}}((S^3,L);H, (\rho_{\mathrm{reg}}, \widetilde{T})) = Z_{\mathrm{HKR}}(S^3(L);D(H)),\]
where $S^3(L)$ is the 3-manifold obtained by surgery on $L$. 
\end{mainthmintro}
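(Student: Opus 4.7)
The plan is to factor the desired equality through the Kuperberg invariant of the surgery manifold. The paper's intermediate result (advertised in the abstract) asserts that for any closed oriented 3-manifold $M$,
\[
Z_{exK}((M,L); H, (\rho_{reg}, e \otimes \mu)) \;=\; Z_{Kup}(M(L); D(H)),
\]
and the paper provides an explicit procedure for passing from a Heegaard-Link diagram of $(M,L)$ to a Heegaard diagram of $M(L)$. Specializing to $M = S^3$ and combining this with the established involutory equivalence $Z_{Kup}(N; D(H)) = Z_{HKR}(N; D(H))$ (proved in the references cited in the introduction), applied to $N = S^3(L)$, yields the theorem. My proof proposal therefore reduces to establishing the intermediate equality.

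The geometric step is to take a Heegaard-Link diagram $(\Sigma, \alpha, \beta, L)$ of $(S^3, L)$ and, for each framed component $\ell \subset \Sigma$ with framing curve $\ell^*$, attach a 1-handle to $\Sigma$ whose core tracks a push-off of $\ell$. This produces an augmented surface $\Sigma'$ with one additional $\alpha$-curve per component (the meridian of the new handle) and one additional $\beta$-curve per component (isotopic to $\ell^*$, encoding the framing). A standard handle-decomposition argument verifies that this operation realizes Dehn surgery along $L$, so $(\Sigma', \alpha \cup \alpha_{\mathrm{new}}, \beta \cup \beta_{\mathrm{new}})$ is a Heegaard diagram for $S^3(L)$.

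The algebraic step is a local tensor identity. Both $Z_{exK}$ and $Z_{Kup}$ compute a scalar by contracting iterated product/coproduct tensors at each crossing, and they agree verbatim outside a neighborhood of $L$. Along a component $\ell$ of $L$, the extended invariant contributes
\[
T\bigl(\rho_{reg}(x_1)\rho_{reg}(x_2)\cdots\rho_{reg}(x_n)\bigr),
\]
where $x_1,\ldots,x_n \in D(H)$ are the elements assigned at the crossings of $\ell$ with the original $\alpha$- and $\beta$-curves. On the surgery side, the new $\alpha$- and $\beta$-curves contribute the Kuperberg cointegral/integral tensors for $D(H)$ paired against the same crossings. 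Since $\rho_{reg}(x_1)\cdots\rho_{reg}(x_n)$ is left-multiplication by $x_1\cdots x_n$ on $D(H)$ and $T = e\otimes \mu$ is, under the normalization $\mu(e)=1$, precisely the canonical (symmetric, antipode-invariant) trace on the involutory Hopf algebra $D(H)$ determined by its two-sided integral, the trace $T(\rho_{reg}(x_1\cdots x_n))$ coincides with the Kuperberg pairing contributed by the new handle-curves on the surgery diagram. Antipode-invariance of $T$ and cyclicity of the trace ensure independence from the orientation of $\ell$ and the basepoint at which the closure is performed, matching the corresponding invariances of the Kuperberg invariant.

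The main obstacle I anticipate is the crossing-by-crossing bookkeeping in this local identity: one must show that the coproduct factors produced on each new $\alpha$-curve of the stabilized diagram correspond precisely to the iterated action of $\rho_{reg}$ at the crossings of $\ell$ with the original $\beta$-curves (and symmetrically for $\beta$ versus $\alpha$), while verifying that the framing curve $\ell^*$ contributes exactly the integral needed on the new $\beta$-curve. Once this matching is established, summing over all components of $L$ and all crossings yields the intermediate equality, and the involutory $Z_{Kup} = Z_{HKR}$ identification completes the proof.
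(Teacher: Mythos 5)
Your proposal takes a genuinely different route from the paper --- factoring through the Kuperberg invariant of the surgery manifold --- but as written it contains a real error and, even after correction, inverts the paper's logic. The intermediate identity you assert, $Z_{exK}((M,L);H,(\rho_{reg},e\otimes\mu)) = Z_{Kup}(M(L);D(H))$, is not what the paper proves and is false in general: Theorem \ref{intro:thm4}, proved via the surgery Heegaard diagram of Theorem \ref{thm:surgerydiagram}, gives $Z_{exK}((M,L);H,(\rho_{reg},e\otimes\mu)) = Z_{Kup}(M(L);H)$, with the Kuperberg invariant taken with respect to $H$ itself --- the new curves $K_\alpha$ and $K_\beta$ on the stabilized surface carry the ordinary cointegral-comultiplication and integral-multiplication tensors of $H$, not of $D(H)$. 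Since $Z_{Kup}(N;D(H))\neq Z_{Kup}(N;H)$ in general (already for group algebras), your version of the intermediate step contradicts Theorem \ref{intro:thm4}. Relatedly, the ``established equivalence'' you invoke is misstated: the result in the cited literature is $Z_{Kup}(N;H)=Z_{HKR}(N;D(H))$, not $Z_{Kup}(N;D(H))=Z_{HKR}(N;D(H))$. With both corrections your argument does close up logically, but then it consists of Theorem \ref{intro:thm4} plus the very equality $Z_{Kup}(M;H)=Z_{HKR}(M;D(H))$ that the paper derives as a \emph{corollary} of Theorems \ref{intro:thm2} and \ref{intro:thm4}; using it as an input here defeats the stated purpose of giving a new proof of that relation.

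The paper instead proves the statement directly, and your proposal skips the two points where the actual work lies. The paper presents $L$ as a braid trace closure embedded in a Heegaard surface for $S^3$ with a stabilizing bridge at each crossing, and then verifies a local tensor identity showing that the contraction of the $\alpha$- and $\beta$-tensors with the two $\rho_{reg}$ boxes at a crossing produces exactly the (inverse) $R$-matrix $R^D$ of $D(H)$ acting by left multiplication --- this is where the quasitriangular structure of the double enters, and it is absent from your sketch. It then checks that the writhe corrections $S^{-w_{p(x)}+\delta_i}$ in the HKR state sum cancel for the trace-closure presentation, and that the normalization factor $(\mu(v)/\omega(v))^{-c(L)}\omega(v)^{-\mathrm{sign}(L)}$ is trivial because $\mu^D(u)=\mu^D(u^{-1})=1$ for the Drinfeld element of $D(H)$. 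Any complete proof must account for the $R$-matrix decorations and this normalization; a correct surgery-based argument would have to import them through the cited equivalence, which is precisely what the paper is structured to avoid.
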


\begin{mainthmintro}\label{intro:thm3}
    Let $H$ be an involutory ribbon Hopf algebra with the universal R-matrix $R = \sum R' \otimes R''$ and two-sided integral $\mu$ and two-sided cointegral $e$ such that $\mu(e) =1$. Define a representation $\rho_R:D(H)\to \Endo(H)$ of the Drinfeld double of $H$ by
    \[\rho_R(f \otimes v)(h):= \sum f(R')R''vh.\]
\iffalse
    \begin{center}
        \begin{tikzpicture}
            \node[darkgreen] at (0,0) (rho) {\fbox{$\rho_R$}};
            \draw[<-] ([shift={(-0.4,-0.1)}]rho.center) -- ++(-0.5,0);
            \draw[->] ([shift={(-0.4,0.1)}]rho.center) -- ++(-0.5,0);
            \draw[->,darkgreen] (rho) -- ++(0.75,0);
            \node at (1.5,0) {$:=$};
            \begin{scope}[shift = {(3,0)}]
                \node at (0,0) (R) {$R$};
                \node at (0.2,0.4) {\tiny{$1$}};
                \node at (1,-0.5) (M) {$M$};
                \draw[->] (R) .. controls +(0.5,0.5) and +(1,0) .. (-0.5,0.75); 
                \draw[->] (R) -- (M);
                \draw[<-] (M) -- ++(-170:1.5);
                \draw[->] (M) -- ++(0.75,0);
            \end{scope}
        \end{tikzpicture}
    \end{center}\fi
    Then
    \[Z_{\mathrm{exK}}((S^3,L);H, (\rho_R, \widetilde{T})) = Z_{\mathrm{HKR}}(S^3(L);H)\]
\end{mainthmintro}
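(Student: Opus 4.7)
The strategy is to reduce the extended Kuperberg computation to one taking place entirely inside $H$ (rather than the Drinfeld double), and then identify the resulting tensor network with the Hennings construction on a planar diagram for $L$. The key algebraic observation is that $\rho_R$ factors as $\rho_R = L_H \circ \phi_R$, where $\phi_R: D(H) \to H$ is the map $f \otimes v \mapsto \sum f(R') R'' v$, and $L_H: H \to \Endo(H)$ denotes the left regular representation of $H$. That $\phi_R$ is an algebra map is a standard consequence of the quasi-triangular axioms on $R$. Under this factorization, the trace $e \otimes \mu$ on $D(H)$ pushes forward to the trace on $L_H(H)$ built from the cointegral $e$ and integral $\mu$ of $H$, and each $D(H)$-decoration along a link component reduces to an $H$-decoration acting by left multiplication.

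Next, I would choose a Heegaard-link diagram $HLD$ for $(S^3, L)$ adapted to a planar framed link diagram $D$ for $L$: a standard Heegaard splitting of $S^3$ whose surface carries $D$ in such a way that every self-crossing and local extremum of $D$ appears either as a link-link crossing or as a link-Heegaard crossing in $HLD$. With this setup, the tensor assigned by $\rho_R$ at a link-link crossing, through $\phi_R$, produces exactly the $R$-matrix insertion used in HKR, while link-Heegaard crossings combined with the iterated multiplication/comultiplication tensors along Heegaard circles provide the framing and ribbon contributions. Contracting the full tensor network, the Kuperberg-style tensors along Heegaard circles produce integrals/cointegrals of $H$ which merge with the $R$-weighted link contributions, and the trace $e \otimes \mu$ along each link component becomes the standard HKR closure. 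The output coincides with the Hennings tensor network for $H$ on $D$, which by definition computes $Z_{HKR}(S^3(L); H)$.

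The main obstacle will be the last step's precise local bookkeeping: verifying that each local piece of $HLD$ (link-link crossing, link-Heegaard crossing, and extrema) contributes exactly the HKR tensor for $H$, with the correct framing and orientation conventions. The involutory assumption $S^2 = \mathrm{id}$ simplifies the antipode bookkeeping, and the ribbon element emerges naturally from $\phi_R$ since $\rho_R$ is built from $R$; nevertheless, a careful local lemma (of a flavor similar to that used for Theorem~\ref{intro:thm2}) will be needed to align the HKR conventions exactly. Once the local tensors are matched, gluing yields the claimed global identity $Z_{exK}((S^3, L); H, (\rho_R, e \otimes \mu)) = Z_{HKR}(S^3(L); H)$.
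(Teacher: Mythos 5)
Your proposal follows essentially the same route as the paper: the factorization $\rho_R = L_H\circ\phi_R$, with $\phi_R:D(H)\to H$ the standard algebra map $f\otimes v\mapsto \sum f(R')R''v$, is exactly why the paper can cite Radford for $\rho_R$ being a representation, and the rest (trace-closure braid diagram with a stabilizing bridge at each crossing, identification of the bridge tensor as an $R^{\pm 1}$ insertion acting by left multiplication, and $\mu$-closure along each component) is the paper's argument, which itself defers the local bookkeeping to the proof of Theorem~\ref{intro:thm2}. The one place your description drifts from the paper is the role of extrema: in the chosen diagram the link meets Heegaard circles only at the bridges, the HKR antipode powers at cups and caps cancel outright rather than being supplied by link--Heegaard crossings, and the ribbon normalization is disposed of by checking $\mu(v)=\mu(v^{-1})=1$.
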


When the representation is chosen to be the regular representation of $D(H)$, the following theorem shows that the invariant for the pair $(M,L)$ equals the invariant of the manifold $M(L)$ obtained from $M$ by surgery on $L$.
\begin{mainthmintro}\label{intro:thm4}
    For a pair $(M,L)$,
    \[Z_{\mathrm{exK}}((M,L);H, (\rho_{\mathrm{reg}}, \widetilde{T})) = Z_{\mathrm{exK}}((M(L),\emptyset);H,\emptyset)\]
\end{mainthmintro}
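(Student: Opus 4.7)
The plan is to exhibit an explicit combinatorial operation that turns a Heegaard-Link diagram for $(M,L)$ into an ordinary Heegaard diagram for the surgery manifold $M(L)$, and then to verify that the tensor network assigned to this new Heegaard diagram by Kuperberg's recipe agrees on the nose with the tensor network assigned to the original Heegaard-Link diagram by the extended-Kuperberg recipe with input $(\rho_{reg}, e \otimes \mu)$. Since the paper already promises such a Heegaard diagram for $M(L)$ in the abstract, I would make that lemma the backbone of the argument.

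First, I would describe and prove the diagrammatic construction: for each component $L_i$ of the framed link, replace the link curve on $\Sigma$ by a pair of new curves, one new $\alpha$-circle running along the framing longitude of $L_i$ and one new $\beta$-circle realizing its meridian, after stabilizing $\Sigma$ by a new handle once per link component. This is the Heegaard-theoretic translation of Dehn surgery: the tubular neighborhood of each $L_i$ is removed and re-glued, and the effect on the splitting is exactly to stabilize and replace the link curve with two honest Heegaard curves encoding the surgery slope. I would verify this is a Heegaard diagram for $M(L)$ via the standard correspondence between Heegaard diagrams and handle decompositions.

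Next, I would identify the local tensors. The regular representation $\rho_{reg}$ of $D(H)$ acts on $D(H)=H^{\ast}\otimes H$ by left multiplication, and the trace $T=e\otimes\mu$ is the canonical two-sided integral/cointegral pair on $D(H)$. The contribution of a link component $L_i$ to $Z_{exK}$ under $(\rho_{reg}, e\otimes\mu)$ is a closed tensor formed by iterated multiplication by $D(H)$-elements (one for each crossing of $L_i$ with an $\alpha$- or $\beta$-curve), evaluated against $e\otimes\mu$. I would check that this is exactly the tensor that Kuperberg's prescription attaches to the new $\alpha$- and $\beta$-circles introduced in the construction: the multiplication structure of $\rho_{reg}$ reproduces Kuperberg's multiplication/comultiplication structure constants at each crossing, and $e\otimes\mu$ plays simultaneously the role of cointegral on the new handle's $\alpha$-circle and integral on its $\beta$-circle, accounting for the doubled $D(H)=H^{\ast}\otimes H$ in place of two separate Kuperberg assignments from $H$ and $H^{\ast}$.

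The main obstacle will be verifying that the framing data are correctly transported, i.e.\ that the new $\alpha$-curve realizes the specified framing of $L_i$ relative to the Seifert framing rather than some other longitude, and that the choice of basepoint and orientation on $\gamma_i$ does not produce a spurious twist in the tensor contraction. This requires bookkeeping how the framing is encoded in the Heegaard-Link diagram and tracking it through the stabilization; involutivity of $H$ will be what makes sign and antipode issues cancel. Once this is established, Theorem \ref{intro:thm1} (invariance of $Z_{exK}$) and the analogous invariance of the Kuperberg invariant under Heegaard moves allow one to pass from any Heegaard-Link diagram for $(M,L)$ to its constructed Heegaard diagram for $M(L)$, and the local tensor equalities verified above then yield the desired identity.
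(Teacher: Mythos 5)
Your proposal follows essentially the same route as the paper: the paper's Theorem \ref{thm:surgerydiagram} is exactly your surgery-diagram construction (stabilize once per link component, let a new Heegaard circle replace the link curve and a meridian circle be the belt of the new handle), and the paper's proof of Theorem \ref{intro:thm4} is precisely your local tensor comparison, using that $\rho_{\mathrm{reg}}$ is left multiplication so the link-component tensor is $(e\otimes\mu)$ evaluated on a product in $D(H)$, which the formula for $M^D$ splits into the Kuperberg tensors of the two new circles. The one discrepancy is a labeling one you would be forced to fix in the bookkeeping step: with the paper's conventions ($\alpha\mapsto\Delta,e$ and $\beta\mapsto M,\mu$) the computation makes the longitude the new $\beta$-circle and the meridian the new $\alpha$-circle, opposite to the roles you assigned.
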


A corollary of Theorems \ref{intro:thm2} and \ref{intro:thm4} is a new proof of the relation between the involutory Kuperberg and Hennings invariant that for any 3-manifold $M$, $Z_{\mathrm{Kup}}(M;H) = Z_{\mathrm{HKR}}(M; D(H))$. 

 A feature of this paper is Theorem \ref{thm:surgerydiagram} which outlines a local algorithm to obtain a Heegaard diagram for the surgery manifold $M(L)$ from a Heegaard-Link diagram for the pair $(M,L)$. While there exist algorithms to obtain Heegaard diagrams for surgery manifolds, we are not aware of this particular algorithm in the literature. Note that while the extended Kuperberg invariant does indeed capture the data of the Kuperberg invariant of the surgery manifold, it also captures more data depending on the representations chosen. We leave to future work an exploration of the results of using various other representations.

We expect that this invariant can be generalized further to the non-semisimple case, however one may need to restrict to integral curves of a combing of the 3-manifold as opposed to arbitrary links. We leave this as a future direction.

The paper is organized as follows. In Section \ref{HopfAlgebras_Sec}, we review tensor diagrams and Hopf algebras and prove some lemmas which will be used later. In Section \ref{Topology_Sec}, we introduce Heegaard-Link diagrams and prove a complete set of moves to classify pairs $(M,L)$. We also prove Theorem \ref{thm:surgerydiagram} which gives a Heegaard diagram for the surgery manifold $M(L)$. In Section \ref{DiagramInvariant_Sec}, we introduce the extended Kuperberg bracket and prove the invariance of $Z_{\mathrm{exK}}$. In this section we also introduce a state-sum extension defined on irreducible representations. Sections \ref{Hennings_Sec} and \ref{Surgery_Sec} are dedicated to proving the relationship of $Z_{\mathrm{exK}}$ with the Hennings invariant and with the surgery manifold.

\section{Hopf Algebras}
\label{HopfAlgebras_Sec}
\subsection{Tensor Diagrams and Basic Facts}

Let $V$ be a finite dimensional vector space and $V^*$ its linear dual. A \emph{tensor diagram} in $V$ is a pair $(\mcal G, \mcal T = \{\mcal T_v\})$ where,
\begin{enumerate}
    \item $\mcal G$ is a directed graph such that at each vertex $v$, there is a local ordering on the set of incoming legs (i.e. edges) and a local ordering on the set of outgoing legs by $\{1,\dots ,i_v\}$ and $\{1,\dots, o_v\}$, respectively;
    \item for each vertex $v$, $\mcal T_v\in V^1\otimes \cdots \otimes V^{i_v}\otimes V_1\otimes \cdots \otimes V_{o_v}$, where each $V^i$ is a copy of $V^*$ associated with the $i$-th incoming leg and each $V_j$ is a copy of $V$ associated with the $j$-th outgoing leg. In this case, $\mcal T_v$ is called an $(i_v, o_v)$ tensor.
\end{enumerate}

Choosing a basis $\{v_1,\dots ,v_k\}$ for $V$ and a dual basis $\{v^1,\dots ,v^k\}$ for $V^*$, then an $(m,n)$ tensor $\mcal T$ can be generically written as 
\begin{equation}\label{eq:tensor}\mcal T = \sum \mcal T_{i_1,\dots ,i_m}^{j_1,\dots j_n} v^{i_1}\otimes \cdots \otimes v^{i_m}\otimes v_{j_1}\otimes \cdots \otimes v_{j_n}\end{equation} An example of a (3,2) tensor $\mcal T$, as shown tensor-diagrammatically, is:

\begin{center}
    \begin{tikzpicture}
        \node at (0,0) (T) {$\mcal T$};
        \foreach \x in {155,180,-155} {
            \draw[<-] (T) -- ++(\x:0.75);
        }
        \foreach \x in {25,-25} {
            \draw[->] (T) -- ++(\x:0.75);
        }
    \end{tikzpicture}
\end{center}

In tensor diagrams, vertices are replaced by the labels of the corresponding tensors. An important convention we use is that incoming edges have local ordering given by enumerating counterclockwise. Similarly, outgoing edges are enumerated clockwise. This convention uniquely determines a local ordering if both types of edges are present. If there is any ambiguity, we will label which edge is first in the ordering.

There is a binary operation on tensors called \emph{contraction}. Given an $(m,n)$ tensor $\mcal T$ and an $(m',n')$ tensor $\mcal T'$ where $n,m'\neq 0$, then we can define a contraction of the $k$th outgoing edge of $\mcal T$ with the $l$th incoming edge of $\mcal T'$ by just connecting the corresponding edges in the diagram. In tensor notation (as in \ref{eq:tensor}), this corresponds to an identification of the $k$th lower index of $\mcal T$ and the $l$th upper index of $\mcal T'$.

For example, the figure below shows the contraction of the first outgoing edge of $\mcal T$ with the second incoming edge of $\mcal T'$, denoted $\mcal T *_{1,2}\mcal T'$:
\begin{center}
    \begin{tikzpicture}
        \node at (0,0) (T) {$\mcal T$};
        \node at (1,0) (T') {$\mcal T'$};
        \foreach \x in {155,180,-155} {
            \draw[<-] (T) -- ++(\x:0.75);
        }
        \draw[->] (T) -- (T');
        \foreach \x in {155,-155} {
            \draw[<-] (T') -- ++(\x:0.75);
        }
        \foreach \x in {25,-25} {
            \draw[->] (T') -- ++(\x:0.75);
        }
    \end{tikzpicture}
\end{center}

and in tensor notation this is given by: 
\begin{align*}
    \Big (\sum \mcal T_{a,b,c}^d v^a\otimes v^b\otimes v^c \otimes v_d \Big ) &*_{1,2} \Big ( \sum \mcal{T'}_{a',b',c'}^{d',e'} v^{a'}\otimes v^{b'} \otimes v^{c'} \otimes v_{d'}\otimes v_{e'}\Big ) \\
    &= \sum \Big ( \sum_{b',d}\delta_{b'}^d\mcal T_{a,b,c}^d \mcal{T'}_{a',b',c'} ^{d',e'}\Big ) v^{a'} \otimes v^a\otimes v^b \otimes v^c \otimes v^{c'} \otimes v_{d'} \otimes v_{e'}
\end{align*}

where $\delta_x^y$ is Kronecker delta function. Notice that we ordered the tensor factors so as to match the convention we use on diagrams.

Because $V$ is finite dimensional, we may identify $(m,n)$ tensors with linear maps from $V^{\otimes m}$ to $V^{\otimes n}$. In light of this, will often use tensor contraction and function composition notation interchangeably. In particular, taking the (partial) trace of endomorphisms of a finite-dimensional vector space is realized as a tensor contraction from outgoing edges to incoming edges of the same tensor. For more discussion on tensor diagrams, see \cite{Kup91} and \cite{kuperberg1997non}.

\begin{dff}
    Let $\Bbbk$ be a field. A (finite-dimensional) \emph{Hopf algebra} over $\Bbbk$ is a tuple $(H,M,i,\Delta,\epsilon,S)$ consisting of a finite-dimensional vector space $H$ over $\Bbbk$, a (2,1) tensor $M$ called the \emph{multiplication}, a (0,1) tensor $i$ called the \emph{unit}, a (1,2) tensor $\Delta$ called the \emph{comultiplication}, a (1,0) tensor $\epsilon$ called the \emph{counit}, and a (1,1) tensor $S$ called the \emph{antipode} satisfying:

    \begin{enumerate}[\text{Axiom} 1:]
        \item (Associativity)
    \begin{center}
        \begin{tikzpicture}
            \node at (0,0) (ML) {$M$};
            \node at (1,0) (MR) {$M$};
            \draw[->] (ML) -- (MR);
            \draw[<-] (ML) -- ++(135:0.75);
            \draw[<-] (ML) -- ++(-135:0.75);
            \draw[<-] (MR) -- ++(-135:0.75);
            \draw[->] (MR) -- ++(0.75,0);
            \node at (2.5,0) {$=$};
            \begin{scope}[shift = {(3.75,0)}]
                \node at (0,0) (ML) {$M$};
                \node at (1,0) (MR) {$M$};
                \draw[->] (ML) -- (MR);
                \draw[<-] (ML) -- ++(135:0.75);
                \draw[<-] (ML) -- ++(-135:0.75);
                \draw[<-] (MR) -- ++(135:0.75);
                \draw[->] (MR) -- ++(0.75,0);
            \end{scope}
        \end{tikzpicture}
    \end{center}

    \item (Unit)
    \begin{center}
        \begin{tikzpicture}
            \node at (1,0) (M) {$M$};
            \node[above left = 0.25 of M] (i) {$i$};
            \draw[->] (i) -- (M);
            \draw[<-] (M) -- ++(-135:0.75);
            \draw[->] (M) -- ++(0.75,0);
            \node at (2.5,0) {$=$};
            \begin{scope}[shift = {(3.25,0)}]
                \node at (1,0) (M) {$M$};
                \node[below left = 0.25 of M] (i) {$i$};
                \draw[->] (i) -- (M);
                \draw[<-] (M) -- ++(135:0.75);
                \draw[->] (M) -- ++(0.75,0);
            \node at (2.5,0) {$=$};
            \begin{scope}[shift = {(3.25,0)}]
                \draw[->] (0,0) -- (1,0);
            \end{scope}
            \end{scope}
        \end{tikzpicture}
    \end{center}

    \item (Coassociativity)
    \begin{center}
        \begin{tikzpicture}
            \node at (0,0) (D1) {$\Delta$};
            \node at (1,0) (D2) {$\Delta$};
            \draw[->] (D1) -- (D2);
            \draw[<-] (D1) -- ++(-0.75,0);
            \draw[->] (D1) -- ++(45:0.75);
            \draw[->] (D2) -- ++(45:0.75);
            \draw[->] (D2) -- ++(-45:0.75);
            \node at (2.5,0) {$=$};
            \begin{scope}[shift = {(4,0)}]
                \node at (0,0) (D1) {$\Delta$};
            \node at (1,0) (D2) {$\Delta$};
            \draw[->] (D1) -- (D2);
            \draw[<-] (D1) -- ++(-0.75,0);
            \draw[->] (D1) -- ++(-45:0.75);
            \draw[->] (D2) -- ++(45:0.75);
            \draw[->] (D2) -- ++(-45:0.75);
            \end{scope}
        \end{tikzpicture}
    \end{center}

    \item (Counit)
    \begin{center}
        \begin{tikzpicture}
            \node at (0,0) (D) {$\Delta$};
            \node[above right = 0.25 of D] (e) {$\epsilon$};
            \draw[->] (D) -- (e);
            \draw[<-] (D) -- ++(-0.75,0);
            \draw[->] (D) -- ++(-45:0.75);
            \node at (1.5,0) {$=$};
            \begin{scope}[shift = {(3,0)}]
                \node at (0,0) (D) {$\Delta$};
            \node[above right = 0.25 of D] (e) {$\epsilon$};
            \draw[->] (D) -- (e);
            \draw[<-] (D) -- ++(-0.75,0);
            \draw[->] (D) -- ++(-45:0.75);
            \node at (1.5,0) {$=$};
            \begin{scope}[shift = {(3,0)}]
                \draw[->] (-0.75,0) -- (0.25,0);
            \end{scope}
            \end{scope}
        \end{tikzpicture}
    \end{center}

    \item (Bialgebra)

    \begin{center}
        \begin{tikzpicture}
            \node at (0,0) (M) {$M$};
            \node at (1,0) (D) {$\Delta$};
            \draw[->] (M) -- (D);
            \draw[<-] (M) -- ++(-135:0.75);
            \draw[<-] (M) -- ++(135:0.75);
            \draw[->] (D) -- ++(45:0.75);
            \draw[->] (D) -- ++(-45:0.75);
            \node at (2.5,0) {$=$};
            \begin{scope}[shift = {(4,0)}]
                \node at (0,0.5) (DU) {$\Delta$};
                \node at (0,-0.5) (DL) {$\Delta$};
                \node at (1,0.5) (MU) {$M$};
                \node at (1,-0.5) (ML) {$M$};
                \draw[->] (DU) -- (MU);
                \draw[->] (DU) -- (ML);
                \draw[->] (DL) -- (MU);
                \draw[->] (DL) -- (ML);
                \draw[<-] (DU) -- ++(-0.75,0);
                \draw[<-] (DL) -- ++(-0.75,0);
                \draw[->] (MU) -- ++(0.75,0);
                \draw[->] (ML) -- ++(0.75,0);
                \begin{scope}[shift = {(-4,-2)}]
                    \node at (0,0) (i) {$i$};
                    \node at (1,0) (D) {$\Delta$};
                    \draw[->] (i) -- (D);
                    \draw[->] (D) -- ++(45:0.75);
                    \draw[->] (D) -- ++(-45:0.75);
                    \node at (2.5,0) {$=$};
                    \begin{scope}[shift = {(4,0)}]
                        \node at (0,0.5) (iU) {$i$};
                        \node at (0,-0.5) (iL) {$i$};
                        \draw[->] (iU) -- ++(0.75,0);
                        \draw[->] (iL) -- ++(0.75,0);
                    \end{scope}
                \end{scope}
            \end{scope}
        \end{tikzpicture}
    \end{center}

    \item (Antipode)
    \begin{center}
        \begin{tikzpicture}
            \node at (0,0) (D) {$\Delta$};
            \node at (1,0.5) (S) {$S$};
            \node at (2,0) (M) {$M$};
            \draw[->] (D) -- (S);
            \draw[->] (S) -- (M);
            \draw[->] (D) .. controls +(0.5,-0.5) and +(-0.5,-0.5) .. (M);
            \draw[<-] (D) -- ++(-0.75,0);
            \draw[->] (M) -- ++(0.75,0);
            \node at (3.5,0) {$=$};
            \begin{scope}[shift = {(5,0)}]
                \node at (0,0) (D) {$\Delta$};
            \node at (1,-0.5) (S) {$S$};
            \node at (2,0) (M) {$M$};
            \draw[->] (D) -- (S);
            \draw[->] (S) -- (M);
            \draw[->] (D) .. controls +(0.5,0.5) and +(-0.5,0.5) .. (M);
            \draw[<-] (D) -- ++(-0.75,0);
            \draw[->] (M) -- ++(0.75,0);
            \node at (-1.5,-2) {$=$};
            \begin{scope}[shift = {(0,-2)}]
                \node at (0,0) (e) {$\epsilon$};
                \node at (1,0) (i) {$i$};
                \draw[<-] (e) -- ++(-0.75,0);
                \draw[->] (i) -- ++(0.75,0);
            \end{scope}
            \end{scope}
        \end{tikzpicture}
    \end{center}
    \end{enumerate}

    We say that $H$ is \emph{involutory} if 
    \begin{center}
        \begin{tikzpicture}
            \node at (0,0) (S1) {$S$};
            \node at (1,0) (S2) {$S$};
            \draw[->] (S1) -- (S2);
            \draw[<-] (S1) -- ++(-0.75,0);
            \draw[->] (S2) -- ++(0.75,0);
            \node at (2.5,0) {$=$};
            \draw[->] (3.25,0) -- (4.25,0);
        \end{tikzpicture}
    \end{center}
\end{dff} 

We will use the following abbreviations in light of (co)associativity.

\begin{center}
    \begin{tikzpicture}
        \node at (0,0) (M) {$M$};
        \draw[->] (M) -- ++(0.75,0);
        \foreach \angle in {90,120,150,240}
        {
            \draw[<-] (M) -- ++(\angle:0.75);
        }
        \foreach \angle in {175,195,215}
        {
            \filldraw (\angle:0.5cm) circle [radius = 0.5pt];
        }
        \node at (1.5,0) {$=$};
    \begin{scope}[shift = {(3,0)}]
        \node at (0,0) (M1) {$M$};
        \node at (1,0) (M2) {$M$};
        \node at (2,0) (dots) {$\cdots$};
        \node at (3,0) (M3) {$M$};
        \draw[<-] (M1) -- ++(-0.75,0);
        \draw[<-] (M1) -- ++(-135:0.75);
        \draw[->] (M1) -- (M2);
        \draw[<-] (M2) -- ++(-135:0.75);
        \draw[->] (M2) -- (dots);
        \draw[->] (dots) -- (M3);
        \draw[<-] (M3) -- ++(-135:0.75);
        \draw[->] (M3) -- ++(0.75,0);
    \end{scope}
    \begin{scope}[shift = {(0,-1.25)}]
        \node at (0,0) (M) {$M$};
        \draw[<-] (M) -- ++(-0.75,0);
        \draw[->] (M) -- ++(0.75,0);
        \node at (1.5,0) {$=$};
        \draw[->] (2.25,0) -- (3.25,0);
    \end{scope}
    \begin{scope}[shift = {(5,-1.25)}]
        \node at (0,0) (M) {$M$};
        \draw[->] (M) -- ++(0.75,0);
        \node at (1.5,0) {$=$};
        \node at (2.5,0) (i) {$i$};
        \draw[->] (i) -- ++(0.75,0);
    \end{scope}
    \end{tikzpicture}\vspace{0.5cm}

    \begin{tikzpicture}
        \node at (0,0) (D) {$\Delta$};
        \draw[<-] (D) -- ++(-0.75,0);
        \foreach \angle in {90,30,60,-60}
        {
            \draw[->] (D) -- ++(\angle:0.75);
        }
        \foreach \angle in {-35,-15,5}
        {
            \filldraw (\angle:0.5cm) circle [radius = 0.5pt];
        }
        \node at (1.5,0) {$=$};
    \begin{scope}[shift = {(3,0)}]
        \node at (0,0) (D1) {$\Delta$};
        \node at (1,0) (D2) {$\Delta$};
        \node at (2,0) (dots) {$\cdots$};
        \node at (3,0) (D3) {$\Delta$};
        \draw[<-] (D1) -- ++(-0.75,0);
        \draw[->] (D1) -- ++(-45:0.75);
        \draw[->] (D1) -- (D2);
        \draw[->] (D2) -- ++(-45:0.75);
        \draw[->] (D2) -- (dots);
        \draw[->] (dots) -- (D3);
        \draw[->] (D3) -- ++(-45:0.75);
        \draw[->] (D3) -- ++(0.75,0);
    \end{scope}
    \begin{scope}[shift = {(0,-1.25)}]
        \node at (0,0) (D) {$\Delta$};
        \draw[<-] (D) -- ++(-0.75,0);
        \draw[->] (D) -- ++(0.75,0);
        \node at (1.5,0) {$=$};
        \draw[->] (2.25,0) -- (3.25,0);
    \end{scope}
    \begin{scope}[shift = {(5,-1.25)}]
        \node at (0,0) (D) {$\Delta$};
        \draw[<-] (D) -- ++(-0.75,0);
        \node at (1,0) {$=$};
        \node at (2.5,0) (e) {$\epsilon$};
        \draw[<-] (e) -- ++(-0.75,0);
    \end{scope}
    \end{tikzpicture}

    \end{center}

If $H$ is a finite-dimensional Hopf algebra, then the antipode is always invertible for any field (\cite[Theorem 7.1.14]{Rad11}). Then $H^{\mathrm{op}}$ is a new Hopf algebra $(H,M^{\mathrm{op}},i,\Delta,\epsilon,S^{-1})$, where $M^{\mathrm{op}}$ is the tensor

\begin{center}
    \begin{tikzpicture}
        \node at (0,0) (M) {$M^{\mathrm{op}}$};
        \foreach \angle in {150,210} {
        \draw[<-] (M) -- ++(\angle:0.75);
        }
        \draw[->] (M) -- ++(0.75,0);
        \node at (1.5,0) {$=$};
        \begin{scope}[shift = {(3.5,0)}]
            \node at (0,0) (M) {$M$};
        \draw[<-] (M) .. controls +(-0.75,0.5) and +(0.5,0) .. ++(-155:1.25);
        \draw[<-] (M) .. controls +(-0.75,-0.5) and +(0.5,0) .. ++(155:1.25);
        \draw[->] (M) -- ++(0.75,0);
        \end{scope}
    \end{tikzpicture}
\end{center}

Similarly we can define $H^{\mathrm{cop}}$ which has opposite comultiplication, and finally we can define the \emph{dual} Hopf algebra, denoted $H^*$ which has Hopf algebra structure given by swapping $M \leftrightarrow \Delta$ and $\epsilon \leftrightarrow i$ and reversing all arrows.

\begin{lem}[\cite{Kup91}]
    The following \emph{ladders} are invertible:

    \begin{center}
        \begin{tikzpicture}
            \node at (0,0) (M) {$M$};
            \node at (0,-2) (D) {$\Delta$};
            \draw[->] (D) -- (M);
            \draw[<-] (M) -- ++(-0.75,0);
            \draw[->] (M) -- ++(0.75,0);
            \draw[<-] (D) -- ++(-0.75,0);
            \draw[->] (D) -- ++(0.75,0);
        \begin{scope}[shift = {(3,0)}]
            \node at (0,0) (M) {$M$};
            \node at (0,-2) (D) {$\Delta$};
            \node at (0,-1) (S) {$S$};
            \draw[->] (D) -- (S);
            \draw[->] (S) -- (M);
            \draw[<-] (M) -- ++(-0.75,0);
            \draw[->] (M) -- ++(0.75,0);
            \draw[<-] (D) -- ++(-0.75,0);
            \draw[->] (D) -- ++(0.75,0);
        \end{scope}
        \begin{scope}[shift = {(6,0)}]
            \node at (0,0) (M) {$M$};
            \node at (0,-2) (D) {$\Delta$};
            \draw[->] (D) -- (M);
            \draw[<-] (M) -- ++(-0.75,0);
            \draw[->] (M) -- ++(0.75,0);
            \draw[->] (D) -- ++(-0.75,0);
            \draw[<-] (D) -- ++(0.75,0);
        \end{scope}
        \begin{scope}[shift = {(9,0)}]
            \node at (0,0) (M) {$M$};
            \node at (0,-2) (D) {$\Delta$};
            \node at (0,-1) (S) {$S$};
            \draw[->] (D) -- (S);
            \draw[->] (S) -- (M);
            \draw[<-] (M) -- ++(-0.75,0);
            \draw[->] (M) -- ++(0.75,0);
            \draw[->] (D) -- ++(-0.75,0);
            \draw[<-] (D) -- ++(0.75,0);
        \end{scope}
        \end{tikzpicture}
    \end{center}
\end{lem}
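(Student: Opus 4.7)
The plan is to verify invertibility of each ladder by exhibiting an explicit inverse from the antipode-inserted variant in the pair and checking the composition is the identity using only the Hopf algebra axioms; the first two ladders form one such pair and the last two form the other.

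First, I would translate the first ladder into a linear map $H \otimes H \to H \otimes H$. Reading off the tensor diagram with the prescribed convention (incoming edges counterclockwise, outgoing edges clockwise), this ladder sends $a \otimes b$ to $\sum a\, b_{(1)} \otimes b_{(2)}$, where $\Delta(b) = \sum b_{(1)} \otimes b_{(2)}$ is written in Sweedler notation. Correspondingly, the second ladder, obtained by inserting $S$ on the internal wire between $\Delta$ and $M$, represents the map $a \otimes b \mapsto \sum a\, S(b_{(1)}) \otimes b_{(2)}$.

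Second, I would verify these are two-sided inverses. Composing in one order, applied to $a \otimes b$, yields $\sum a\, S(b_{(1)})\, b_{(2)(1)} \otimes b_{(2)(2)}$. Rewriting the iterated coproduct via coassociativity, $\sum b_{(1)} \otimes b_{(2)(1)} \otimes b_{(2)(2)} = \sum b_{(1)(1)} \otimes b_{(1)(2)} \otimes b_{(2)}$, this becomes $\sum a\, S(b_{(1)(1)})\, b_{(1)(2)} \otimes b_{(2)}$. The antipode axiom $\sum S(c_{(1)})\, c_{(2)} = \epsilon(c)\cdot 1$, applied with $c = b_{(1)}$, gives $\sum a\, \epsilon(b_{(1)}) \otimes b_{(2)}$, which collapses to $a \otimes b$ by the counit axiom. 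The opposite composition order is analogous, using instead $\sum c_{(1)} S(c_{(2)}) = \epsilon(c) \cdot 1$. The remaining pair of ladders (with the two external arrows of $\Delta$ reversed) is handled in exactly the same manner: interpreted with the stated leg-ordering convention, the corresponding linear maps have the roles of the coproduct factors $b_{(1)}, b_{(2)}$ swapped, and the same antipode and counit identities collapse the composition with the antipode-inserted variant to the identity.

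There is no essential obstacle beyond bookkeeping. Invertibility is, at root, a direct diagrammatic manifestation of the antipode axiom, which already asserts precisely that the two-strand diagram $\Delta$ followed by $S$ on one output followed by $M$ collapses to $i \circ \epsilon$; the ladders simply thread this axiom through one extra wire carrying an auxiliary variable $a$. The main care needed is in translating the leg orderings correctly so that the antipode axiom is applied to the appropriate coproduct factor; once the formulas for the four ladders are in hand, the verification is immediate.
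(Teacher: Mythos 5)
The paper offers no proof of this lemma at all --- it is quoted verbatim from \cite{Kup91} --- so there is no internal argument to compare against, and an explicit verification like yours is the natural thing to supply. Your treatment of the first pair of ladders is correct and complete: with the paper's leg-ordering convention the two diagrams are $a\otimes b\mapsto \sum a\,b_{(1)}\otimes b_{(2)}$ and $a\otimes b\mapsto \sum a\,S(b_{(1)})\otimes b_{(2)}$, and both composites collapse to the identity by coassociativity, the antipode axiom, and the counit axiom exactly as you compute.

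The final claim that the antiparallel pair is handled ``in exactly the same manner'' with ``the same antipode and counit identities'' conceals a genuine gap. If, as you say, the only change is that the roles of $b_{(1)}$ and $b_{(2)}$ are swapped, so the third ladder is $a\otimes b\mapsto\sum a\,b_{(2)}\otimes b_{(1)}$ and the fourth is $a\otimes b\mapsto\sum a\,S(b_{(2)})\otimes b_{(1)}$, then the naive composite of these two linear maps is $\sum a\,b_{(2)(2)}S(b_{(2)(1)})\otimes b_{(1)}$ (resp.\ $\sum a\,S(b_{(2)(2)})b_{(2)(1)}\otimes b_{(1)}$ in the other order), and the expressions $\sum c_{(2)}S(c_{(1)})$ and $\sum S(c_{(2)})c_{(1)}$ are \emph{not} instances of the antipode axiom; in a general Hopf algebra they equal $\epsilon(c)1$ only with $S^{-1}$ in place of $S$. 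There are several honest ways to close this. (i) Invoke involutivity, which is the standing hypothesis of \cite{Kup91} and of this paper: applying $S$ to $\sum c_{(2)}S(c_{(1)})$ and using $S^2=\mathrm{id}$ reduces it to the axiom $\sum c_{(1)}S(c_{(2)})=\epsilon(c)1$. (ii) Equivalently, note the antiparallel ladders are the parallel ladders of $H^{\mathrm{cop}}$, whose antipode is $S^{-1}$, which makes the needed $S^{\pm1}$ explicit. (iii) Best adapted to how the lemma is actually used: compose the ladders diagrammatically, gluing along both strands so that the $M$-strand composes left-to-right while the $\Delta$-strand composes right-to-left; then the composite is $\sum a\,S(b_{(1)(2)})b_{(2)}\otimes b_{(1)(1)}=\sum a\,S(b_{(2)(1)})b_{(2)(2)}\otimes b_{(1)}=a\otimes b$, which uses only the standard axiom and is the cancellation one needs when inserting and removing ladders inside a tensor network. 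As written, the last step of your argument is asserted rather than proved, and the assertion is false for a general (non-involutory) Hopf algebra under the straightforward linear-map composition you set up.
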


\begin{lem}[\cite{Kup91}]
    The following identities hold in any Hopf algebra:
    \begin{center}
        \begin{tikzpicture}
            \node at (0,0) (M) {$M$};
            \node at (1,0) (e) {$\epsilon$};
            \draw[->] (M) -- (e);
            \draw[<-] (M) -- ++(135:0.75);
            \draw[<-] (M) -- ++(-135:0.75);
            \node at (2,0) {$=$};
            \begin{scope}[shift = {(3.5,0)}]
                \node at (0,0.5) (eU) {$\epsilon$};
                \node at (0,-0.5) (eL) {$\epsilon$};
                \draw[<-] (eU) -- ++(-0.75,0);
                \draw[<-] (eL) -- ++(-0.75,0);
            \end{scope}
            \begin{scope}[shift = {(6,0)}]
                \node at (0,0) (M) {$M$};
                \node at (1,0) (S) {$S$};
                \draw[->] (M) -- (S);
                \draw[<-] (M) -- ++(135:0.75);
                \draw[<-] (M) -- ++(-135:0.75);
                \draw[->] (S) -- ++(0.75,0);
                \node at (2.5,0) {$=$};
                \begin{scope}[shift = {(4,0)}]
                    \node at (0,0.5) (SU) {$S$};
                    \node at (0,-0.5) (SL) {$S$};
                    \node at (1,0) (M) {$M^{\mathrm{op}}$};
                    \draw[->] (SU) -- (M);
                    \draw[->] (SL) -- (M);
                    \draw[<-] (SU) -- ++(-0.75,0);
                    \draw[<-] (SL) -- ++(-0.75,0);
                    \draw[->] (M) -- ++(0.75,0);
                \end{scope}
            \end{scope}
        \end{tikzpicture}\vspace{0.5cm}

        \begin{tikzpicture}
            \node at (0,0) (i) {$i$};
                \node at (1,0) (S) {$S$};
                \draw[->] (i) -- (S);
                \draw[->] (S) -- ++(0.75,0);
                \node at (2.5,0) {$=$};
                \begin{scope}[shift = {(4,0)}]
                    \node at (0,0) (i) {$i$};
                    \draw[->] (i) -- ++(0.75,0);
                \end{scope}
        \end{tikzpicture}
    \end{center}
\end{lem}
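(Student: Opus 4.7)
The statement packages three classical identities: multiplicativity of the counit $\epsilon$, $S \circ i = i$, and anti-multiplicativity of the antipode $S$. The plan is to prove them in the order stated, using only the axioms of Section~\ref{HopfAlgebras_Sec}.

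For $\epsilon \circ M = \epsilon \otimes \epsilon$, I would observe that this is the counit-analogue of the bialgebra axiom; while Axiom~5 as displayed shows explicitly only the coproduct half ($\Delta \circ M$ and $\Delta \circ i$), the statement for $\epsilon$ is part of the standard bialgebra definition and is implicit. It can also be recovered by capping the two output legs of the bialgebra equation with copies of $\epsilon$ and repeatedly invoking the counit axiom, which identifies $(\epsilon \otimes \epsilon) \circ \Delta$ with $\epsilon$ and collapses both sides to the desired equation. The identity $S \circ i = i$ is obtained by feeding the unit $i$ into the antipode axiom: one side becomes $M \circ (S \otimes \mathrm{id}) \circ (i \otimes i) = S(i)$ after applying $\Delta \circ i = i \otimes i$ and the unit axiom, while the other becomes $i \circ \epsilon \circ i = i$.

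The main content is the identity $S \circ M = M^{op} \circ (S \otimes S)$, which I would prove via the classical convolution-algebra argument. Equip $\mathrm{Hom}(H \otimes H, H)$ with the convolution product built from the tensor-bialgebra coproduct on $H \otimes H$ and the multiplication on $H$; its two-sided identity is $i \circ \epsilon_{H \otimes H}$. The plan is to show that $S \circ M$ and $M^{op} \circ (S \otimes S)$ are, respectively, a right and a left convolution inverse of $M$, so that by the uniqueness of inverses in a monoid they must coincide. Verifying $M * (S \circ M) = i \circ \epsilon_{H \otimes H}$ uses the bialgebra axiom to expand $\Delta \circ M$ and then the antipode axiom to collapse. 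For $(M^{op} \circ (S \otimes S)) * M = i \circ \epsilon_{H \otimes H}$, after expanding with the bialgebra axiom one first applies the antipode axiom to the inner pair of legs (contracting a copy of $i \circ \epsilon$ and absorbing it via the unit axiom) and then applies the antipode axiom once more to the remaining outer pair.

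The main obstacle is tracking the crossings in the bialgebra-axiom diagram through all of these rewrites, but this is routine bookkeeping. Each step is a single local application of one of the six Hopf-algebra axioms, and no new ideas are needed once the convolution setup is in place.
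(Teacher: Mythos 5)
The paper gives no proof of this lemma at all---it is quoted from \cite{Kup91}---so there is nothing internal to compare against; your convolution-algebra argument is the standard proof (Sweedler/Radford, and essentially what Kuperberg's reference contains), and its main line is correct: $S\circ M$ is a right convolution inverse of $M$ in $\Hom(H\otimes H,H)$, $M^{op}\circ(S\otimes S)$ is a left convolution inverse, and associativity of convolution forces them to agree. The computations you sketch for both inverse identities are right, as is the derivation of $S\circ i=i$ from the antipode axiom.

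One sub-step is wrong as stated, though it does not sink the proof. Capping the two output legs of the bialgebra equation $\Delta\circ M=(M\otimes M)\circ(\mathrm{id}\otimes\tau\otimes\mathrm{id})\circ(\Delta\otimes\Delta)$ with $\epsilon\otimes\epsilon$ collapses the left side to $\epsilon\circ M$ via the counit axiom, but the right side collapses to $\bigl((\epsilon\circ M)\otimes(\epsilon\circ M)\bigr)\circ(\mathrm{id}\otimes\tau\otimes\mathrm{id})\circ(\Delta\otimes\Delta)$, which you can only reduce to $\epsilon\otimes\epsilon$ if you already know $\epsilon\circ M=\epsilon\otimes\epsilon$. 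So that ``alternative recovery'' is circular. Your primary justification---that multiplicativity of $\epsilon$ (and $\epsilon\circ i=\mathrm{id}_{\Bbbk}$) is part of the bialgebra axiom, of which the paper's Axiom~5 displays only the $\Delta\circ M$ and $\Delta\circ i$ halves---is the correct reading and should be stated as the definition being used, not as a derived fact. Note also that $\epsilon\circ M=\epsilon\otimes\epsilon$ is genuinely needed in your verification that $M*(S\circ M)$ equals the convolution unit $i\circ(\epsilon\otimes\epsilon)$, so the logical order you chose (counit identity first, then the antipode identities) is the right one.
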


\begin{dff}
    A \emph{left integral} $\mu_L$ in a Hopf algebra $H$ is an element of $H^*$ satisfying: 
    \begin{center}
        \begin{tikzpicture}
            \node at (0,0) (D) {$\Delta$};
            \node[below right = 0.25 of D] (mu) {$\mu_L$};
            \draw[->] (D) -- (mu);
            \draw[<-] (D) -- ++(-0.75,0);
            \draw[->] (D) -- ++(45:0.75);
            \node at (1.5,0) {$=$};
            \begin{scope}[shift = {(3,0)}]
                \node at (0,0) (mu) {$\mu_L$};
                \node at (1,0) (i) {$i$};
                \draw[<-] (mu) -- ++(-0.75,0);
                \draw[->] (i) -- ++(0.75,0);
            \end{scope}
        \end{tikzpicture}
    \end{center}

    Similarly one can define a \emph{right integral}, and \emph{left cointegral} and a \emph{right cointegral}.
    \begin{center}
        \begin{tikzpicture}
            \node at (0,0) (D) {$\Delta$};
            \node[above right = 0.25 of D] (mu) {$\mu_R$};
            \draw[->] (D) -- (mu);
            \draw[<-] (D) -- ++(-0.75,0);
            \draw[->] (D) -- ++(-45:0.75);
            \node at (1.5,0) {$=$};
            \begin{scope}[shift = {(3,0)}]
                \node at (0,0) (mu) {$\mu_R$};
                \node at (1,0) (i) {$i$};
                \draw[<-] (mu) -- ++(-0.75,0);
                \draw[->] (i) -- ++(0.75,0);
            \end{scope}
        \begin{scope}[shift = {(7,0)}]
            \node at (0,0) (M) {$M$};
            \node[below left = 0.25 of M] (e) {$e_L$};
            \draw[<-] (M) -- ++(135:0.75);
            \draw[->] (e) -- (M);
            \draw[->] (M) -- ++(0.75,0);
            \node at (1.5,0) {$=$};
            \begin{scope}[shift = {(3,0)}]
                \node at (0,0) (eps) {$\epsilon$};
                \node at (1,0) (e) {$e_L$};
                \draw[<-] (eps) -- ++(-0.75,0);
                \draw[->] (e) -- ++(0.75,0);
            \end{scope}
        \end{scope}
        \begin{scope}[shift = {(4,-2)}]
            \node at (0,0) (M) {$M$};
            \node[above left = 0.25 of M] (e) {$e_R$};
            \draw[<-] (M) -- ++(-135:0.75);
            \draw[->] (e) -- (M);
            \draw[->] (M) -- ++(0.75,0);
            \node at (1.5,0) {$=$};
            \begin{scope}[shift = {(3,0)}]
                \node at (0,0) (eps) {$\epsilon$};
                \node at (1,0) (e) {$e_R$};
                \draw[<-] (eps) -- ++(-0.75,0);
                \draw[->] (e) -- ++(0.75,0);
            \end{scope}
        \end{scope}
        \end{tikzpicture}
    \end{center}
\end{dff}

    The space of left (resp. right) (co)integrals in a Hopf algebra is one-dimensional (c.f. \cite{Kup91}) and, if the Hopf algebra is involutory, every right integral is also a left integral, in which case we call it a two-sided integral. The following lemmas include some well-known facts about integrals.

    \begin{lem}
        Let $H$ be an involutory Hopf algebra and let $\mu$ and $e$ be a two-sided integral and cointegral, respectively. Then
        \begin{center}
            \begin{tikzpicture}
                \node at (0,0) (e) {$e$};
                \node at (1,0) (D) {$\Delta$};
                \node at (2,0.5) (M) {$M^{\mathrm{op}}$};
                \draw[->] (e) -- (D);
                \draw[->] (D) -- (M);
                \draw[->] (D) -- ++(0.75,-0.5);
                \draw[<-] (M) -- ++(-0.75,0.5);
                \draw[->] (M) -- ++(0.75,0);
                \node at (3,0) {$=$};
                \begin{scope}[shift = {(3.5,0.5)}]
                \node at (0,0) (e) {$e$};
                \node at (1,0) (D) {$\Delta$};
                \node at (2,-0.5) (M) {$M$};
                \node at (1,-1) (S) {$S$};
                \draw[->] (e) -- (D);
                \draw[->] (D) -- (M);
                \draw[->] (D) -- ++(0.75,0.5);
                \draw[<-] (M) -- (S);
                \draw[<-] (S) -- ++(-0.75,0);
                \draw[->] (M) -- ++(0.75,0);
                \end{scope}
                \begin{scope}[shift = {(7.5,0)}]
                \node at (0,0) (e) {$e$};
                \node at (1,0) (D) {$\Delta$};
                \node at (2,-0.5) (M) {$M^{\mathrm{op}}$};
                \draw[->] (e) -- (D);
                \draw[->] (D) -- (M);
                \draw[->] (D) -- ++(0.75,0.5);
                \draw[<-] (M) -- ++(-0.75,-0.5);
                \draw[->] (M) -- ++(0.75,0);
                \node at (3,0) {$=$};
                \begin{scope}[shift = {(3.5,-0.5)}]
                \node at (0,0) (e) {$e$};
                \node at (1,0) (D) {$\Delta$};
                \node at (2,0.5) (M) {$M$};
                \node at (1,1) (S) {$S$};
                \draw[->] (e) -- (D);
                \draw[->] (D) -- (M);
                \draw[->] (D) -- ++(0.75,-0.5);
                \draw[<-] (M) -- (S);
                \draw[<-] (S) -- ++(-0.75,0);
                \draw[->] (M) -- ++(0.75,0);
                \end{scope}
                \end{scope}
            \end{tikzpicture}
            
            \begin{tikzpicture}
                \node at (0,0) (D) {$\Delta^{\mathrm{cop}}$};
                \node at (1,0.5) (M) {$M$};
                \node at (2,0.5) (mu) {$\mu$};
                \draw[->] (D) -- (M);
                \draw[->] (M) -- (mu);
                \draw[<-] (D) -- ++(-0.75,0);
                \draw[->] (D) -- ++(0.75,-0.5);
                \draw[<-] (M) -- ++(-0.75,0.5);
                \node at (2.5,0) {$=$};
                \begin{scope}[shift = {(3.5,0.5)}]
                \node at (0,0) (D) {$\Delta$};
                \node at (1,-0.5) (M) {$M$};
                \node at (2,-0.5) (mu) {$\mu$};
                \node at (1,0.5) (S) {$S$};
                \draw[->] (D) -- (M);
                \draw[->] (M) -- (mu);
                \draw[<-] (D) -- ++(-0.75,0);
                \draw[->] (D) -- (S);
                \draw[->] (S) -- ++(0.75,0);
                \draw[<-] (M) -- ++(-0.75,-0.5);
                \end{scope}
                \begin{scope}[shift = {(7.5,0)}]
                \node at (0,0) (D) {$\Delta^{\mathrm{cop}}$};
                \node at (1,-0.5) (M) {$M$};
                \node at (2,-0.5) (mu) {$\mu$};
                \draw[->] (D) -- (M);
                \draw[->] (M) -- (mu);
                \draw[<-] (D) -- ++(-0.75,0);
                \draw[->] (D) -- ++(0.75,0.5);
                \draw[<-] (M) -- ++(-0.75,-0.5);
                \node at (2.5,0) {$=$};
                \begin{scope}[shift = {(3.5,-0.5)}]
                \node at (0,0) (D) {$\Delta$};
                \node at (1,0.5) (M) {$M$};
                \node at (2,0.5) (mu) {$\mu$};
                \node at (1,-0.5) (S) {$S$};
                \draw[->] (D) -- (M);
                \draw[->] (M) -- (mu);
                \draw[<-] (D) -- ++(-0.75,0);
                \draw[->] (D) -- (S);
                \draw[->] (S) -- ++(0.75,0);
                \draw[<-] (M) -- ++(-0.75,0.5);
                \end{scope}
                \end{scope}
            \end{tikzpicture}
        \end{center}   
    \end{lem}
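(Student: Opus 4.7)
The plan is to establish the first cointegral identity by a direct algebraic/diagrammatic computation, and then obtain the remaining three by symmetry and duality.

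For the first cointegral identity, I would start from the LHS $\sum e_{(1)} h \otimes e_{(2)}$ and proceed in three steps. First, insert a copy of $1$ into the second tensor slot using the antipode axiom in the derived form $h \otimes 1 = \sum h_{(1)} \otimes h_{(2)} S(h_{(3)})$, yielding $\sum e_{(1)} h_{(1)} \otimes e_{(2)} h_{(2)} S(h_{(3)})$. Second, view $h_{(1)} \otimes h_{(2)}$ inside the triple coproduct as $\Delta(h_{(1)})$ via coassociativity; the bialgebra axiom together with the cointegral relation $e\, h_{(1)} = \epsilon(h_{(1)})\, e$ (valid since $e$ is two-sided) collapses the inner sum to $\epsilon(h_{(1)}) \sum e_{(1)} \otimes e_{(2)}$, still multiplied on the second factor by $S(h_{(2)})$. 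Third, the counit identity $\sum \epsilon(h_{(1)}) S(h_{(2)}) = S(h)$ turns the remaining expression into $\sum e_{(1)} \otimes e_{(2)} S(h)$, which is the RHS.

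The second cointegral identity is the mirror of the first (with the roles of the two output legs of $\Delta$ and of the two input legs of $M$ swapped) and is proved by the same three-step argument, now using the opposite cointegral relation $h_{(1)}\, e = \epsilon(h_{(1)})\, e$ and the mirrored version of the antipode insertion.

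The two integral identities are dual to the two cointegral identities: in the dual Hopf algebra $H^*$, multiplication and comultiplication exchange roles and $\mu$ plays the role of a two-sided cointegral, so identities (3) and (4) follow by translating the cointegral identities through the $H \leftrightarrow H^*$ dictionary. As a direct alternative, one can mimic the three-step strategy in the dual setting, using the two-sided integral property $\sum \mu(h_{(2)}) h_{(1)} = \mu(h)\, 1$ (and its right-integral analogue) in place of the bialgebra--cointegral collapse.

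The main obstacle I anticipate is the bookkeeping: one must keep consistent conventions for the input/output orderings of $\Delta$, $M$, $\Delta^{cop}$, and $M^{op}$, and carefully align the coassociative unpacking in the antipode step with the bialgebra--cointegral collapse in the next step. For the dual integral identities, additional care is required to track how the swap of $H$ with $H^*$ exchanges input and output positions in the tensor diagrams; this is the only place where a conceptual subtlety (rather than a purely mechanical axiom application) enters the argument.
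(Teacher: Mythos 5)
Your proof is correct and is essentially the paper's argument in expanded form: the paper's proof composes both sides with an invertible ladder and invokes the bialgebra axiom, and your insertion of $h\otimes 1=\sum h_{(1)}\otimes h_{(2)}S(h_{(3)})$ followed by the bialgebra/cointegral collapse and the counit step is exactly that ladder computation carried out explicitly. The mirror-image and $H\leftrightarrow H^*$ dualization used for the remaining three identities likewise matches what the paper intends.
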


    \begin{proof}
        Each of these is easily seen to be true by just applying a ladder to both sides and using the bialgebra axiom.
    \end{proof}

    \begin{lem}\label{lem:integralequalities}
        Let $H$ be an involutory Hopf algebra and let $\mu$ and $e$ be a two-sided integral and cointegral, respectively such that $\mu(e) = 1$. Then
            \begin{center}
                \begin{tikzpicture}
                    \node at (0,0) (S) {$S$};
                    \draw[<-] (S) -- ++(-0.75,0);
                    \draw[->] (S) -- ++(0.75,0);
                    \node at (1.5,0) {$=$};
                    \node at (2.5,0) (e) {$e$};
                    \node at (3.5,0) (D) {$\Delta$};
                    \node at (4.5,0) (M) {$M$};
                    \node at (5.5,0) (mu) {$\mu$};
                    \draw[->] (e) -- (D);
                    \draw[->] (D) -- (M);
                    \draw[->] (M) -- (mu);
                    \draw[<-] (M) .. controls +(-0.5,-0.5) and +(0.5,0) .. ++(-1.5,-0.5);
                    \draw[->] (D) .. controls +(0.5,-0.5) and +(-0.5,0) .. ++(1.5,-0.5);
                \end{tikzpicture}\vspace{0.25cm}

                \begin{tikzpicture}
                    \node at (0,0) (M) {$M$};
                    \node at (1,0) (mu) {$\mu$};
                    \draw[<-] (M) -- ++(135:0.75);
                    \draw[<-] (M) -- ++(-135:0.75);
                    \draw[->] (M) -- (mu);
                    \node at (1.75,0) {$=$};
                    \node at (3,0) (M2) {$M^{\mathrm{op}}$};
                    \node at (4,0) (mu2) {$\mu$};
                    \draw[<-] (M2) -- ++(135:0.75);
                    \draw[<-] (M2) -- ++(-135:0.75);
                    \draw[->] (M2) -- (mu2);
                    \begin{scope}[shift = {(5.5,0)}]
                        \node at (0,0) (e) {$e$};
                        \node at (1,0) (D) {$\Delta$};
                        \draw[->] (e) -- (D);
                        \draw[->] (D) -- ++(45:0.75);
                        \draw[->] (D) -- ++(-45:0.75);
                        \node at (2.25,0) {$=$};
                        \node at (3,0) (e) {$e$};
                        \node at (4,0) (D) {$\Delta^{\mathrm{cop}}$};
                        \draw[->] (e) -- (D);
                        \draw[->] (D) -- ++(45:0.75);
                        \draw[->] (D) -- ++(-45:0.75);
                    \end{scope}
                \end{tikzpicture}\vspace{0.25cm}

                \begin{tikzpicture}
                    \node at (0,0) (S) {$S$};
                    \node at (1,0) (mu) {$\mu$};
                    \draw[<-] (S) -- ++(-0.75,0);
                    \draw[->] (S) -- (mu);
                    \node at (1.75,0) {$=$};
                    \node at (3,0) (mu2) {$\mu$};
                    \draw[<-] (mu2) -- ++(-0.75,0);
                    \begin{scope}[shift = {(5,0)}]
                        \node at (0,0) (e) {$e$};
                        \node at (1,0) (S) {$S$};
                        \draw[->] (e) -- (S);
                        \draw[->] (S) -- ++(0.75,0);
                        \node at (2.25,0) {$=$};
                        \node at (3,0) (e) {$e$}; 
                        \draw[->] (e) -- ++(0.75,0);
                    \end{scope}
                \end{tikzpicture}
            \end{center}
    \end{lem}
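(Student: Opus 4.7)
My plan is to prove the three displayed identities in the order: first the antipode-invariance $\mu\circ S=\mu$ and $S\circ e=e$ in the bottom row, next the trace and cocommutativity identities $\mu\circ M=\mu\circ M^{op}$ and $\Delta\circ e=\Delta^{cop}\circ e$ in the middle row, and finally the formula for $S$ in the top row. Each identity is used in the next, and the normalization $\mu(e)=1$ pins every scalar to unity.

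For the antipode-invariance identities, I observe that $\mu\circ S$ is again a left integral: using $\Delta\circ S=\tau\circ(S\otimes S)\circ\Delta$ together with the right-integral property of $\mu$ and involutivity $S^2=\mathrm{id}$ yields $((\mu\circ S)\otimes\mathrm{id})\circ\Delta = i\circ(\mu\circ S)$. By one-dimensionality of the integral space, $\mu\circ S=c\,\mu$, and $S^2=\mathrm{id}$ forces $c^2=1$. Symmetrically, $S(e)=c'e$ with $(c')^2=1$, and the pairing $\mu(S(e))$ links the two scalars. The normalization $\mu(e)=1$ together with semisimplicity of $H$ in the involutory case pins $c=c'=1$.

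For the trace and cocommutativity identities, the trace identity $\mu(xy)=\mu(yx)$ is Radford's formula $\mu(xy)=\mu(S^2(y)x)$ specialized to the involutory case $S^2=\mathrm{id}$. Diagrammatically, this can also be derived from the identities in the previous lemma relating $\Delta^{cop}\circ M\circ\mu$ to $\Delta\circ S\circ M\circ\mu$, by precomposing with the unit on the free output strand and absorbing the resulting $S$ via the antipode invariance of $\mu$ just established. The dual identity $\Delta\circ e=\Delta^{cop}\circ e$ follows by the same argument applied in the dual Hopf algebra $H^*$, in which integrals and cointegrals swap roles.

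For the formula for $S$, denote the right-hand side by $F\colon H\to H$; reading off Sweedler indices from the diagram, $F(h)=\sum e_{(1)}\mu(e_{(2)}h)$. I would prove $F=S$ by verifying $\mathrm{id}_H*F=\epsilon\cdot i$ in the convolution algebra $\Endo(H)$ and invoking uniqueness of convolution inverses (since $S$ is already a two-sided convolution inverse of $\mathrm{id}_H$). The computation is
\[
\sum h_{(1)}F(h_{(2)})=\sum h_{(1)}e_{(1)}\mu(e_{(2)}h_{(2)})=\sum h_{(1)}e_{(1)}\mu(h_{(2)}e_{(2)})
\]
by the trace identity, and by the bialgebra axiom this equals $(\mathrm{id}\otimes\mu)\circ\Delta(he)=\mu(he)\cdot 1$ by the right-integral property of $\mu$. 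Since $e$ is a two-sided cointegral, $he=\epsilon(h)e$, so the expression collapses to $\epsilon(h)\mu(e)\cdot 1=\epsilon(h)\cdot 1$ by normalization. Hence $\mathrm{id}*F=\epsilon\cdot i$, and so $F=S$. The main obstacle is the reliance on the trace identity in the very first step, which is precisely why the proof proceeds from the bottom identity upward.
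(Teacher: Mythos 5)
Your proposal is correct in substance but runs in exactly the opposite direction from the paper's proof. The paper first establishes the top identity, $S(x)=\sum e_{(1)}\mu(e_{(2)}x)$, as a one-line consequence of the preceding lemma (apply $\mathrm{id}\otimes\mu$ to the identity relating $\sum e_{(1)}\otimes e_{(2)}x$ to $\sum e_{(1)}S(x)\otimes e_{(2)}$ and use $\sum e_{(1)}\mu(e_{(2)})=\mu(e)1=1$); it then gets $\mu\circ S=\mu$ and $S(e)=e$ by pairing that formula with $\mu$ and evaluating it at $e$, and finally deduces the middle identities from these. You instead prove antipode-invariance first via uniqueness of integrals, import the trace identity $\mu(xy)=\mu(yx)$ from Radford's formula (or, as you note, from the previous lemma --- which is essentially the paper's route run in reverse), and recover the $S$-formula last by uniqueness of convolution inverses. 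Your convolution computation is clean and correct, and the ambiguity in reading the diagram's Sweedler indices is harmless once $\Delta(e)=\Delta^{cop}(e)$ is in hand. The trade-off is that your ordering forces you to establish $\mu\circ S=\mu$ and the trace identity from scratch, with heavier external tools, where the paper gets them for free from the explicit formula for $S$.

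The one genuine soft spot is the sign-pinning step. You correctly reduce to $\mu\circ S=c\,\mu$ and $S(e)=c'e$ with $c=c'$ and $c^2=1$, but then assert that ``$\mu(e)=1$ together with semisimplicity'' forces $c=1$ without giving the mechanism. Two cautions. First, involutory does not imply semisimple over an arbitrary field (and the paper assumes nothing about the characteristic); what delivers semisimplicity and cosemisimplicity here is the hypothesis $\mu(e)=1\neq 0$, via Larson--Sweedler. Second, the actual mechanism you need is that semisimplicity gives $\epsilon(e)\neq 0$, and then $\epsilon\circ S=\epsilon$ applied to $S(e)=c'e$ yields $\epsilon(e)=c'\epsilon(e)$, hence $c'=1$ and so $c=1$. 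With that sentence added your argument is complete. (A minor slip of phrasing: ``precomposing with the unit on the free output strand'' should be ``feeding the unit into the free input strand'' or ``applying the counit to the free output.'')
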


    \begin{proof}
        The first equality is easy to see by using the previous lemma.

        Applying $\mu$ or $e$ to both sides of the first equality shows that $\mu$ and $e$ are invariant under the antipode. This implies the middle equalities.
    \end{proof}

    There is a special cointegral and integral in an involutory Hopf algebra which are always nonzero. This (co)integral will be useful to us later.

    \begin{lem}[\cite{kuperberg1997non}]
        The tensor

        \begin{center}
            \begin{tikzpicture}
                \node at (0,0) (P) {$P$};
                \draw[<-] (P) -- ++(-0.75,0);
                \draw[->] (P) -- ++(0.75,0);
                \node at (1.5,0) {$=$};
                \begin{scope}[shift = {(3,0)}]
                    \node at (0,0) (M) {$M$};
                    \node at (1,0.5) (SU) {$S$};
                    \node at (1,-0.5) (SL) {$S$};
                    \node at (2,0) (D) {$\Delta$};
                    \draw[<-] (M) -- ++(-0.75,0);
                    \draw[->] (SU) -- (M);
                    \draw[->] (M) -- (SL);
                    \draw[->] (D) -- (SU);
                    \draw[->] (SL) -- (D);
                    \draw[->] (D) -- ++(0.75,0);
                \end{scope}
            \end{tikzpicture}
        \end{center}

        is both a non-zero two-sided integral and a non-zero two-sided cointegral and has trace 1.
    \end{lem}

    \begin{lem}[\cite{kuperberg1997non}]\label{lem:uniquenessofintegrals}
        Given a right integral $\mu$ and a right cointegral $e$,

        \begin{center}
            \begin{tikzpicture}
                \node at (0,0) (mu) {$\mu$};
                \node at (1,0) (e) {$e$};
                \draw[<-] (mu) -- ++(-0.75,0);
                \draw[->] (e) -- ++(0.75,0);
                \node at (2.5,0) {$=$};
                \begin{scope}[shift = {(3.5,0)}]
                    \node at (0,0.5) (e) {$e$};
                    \node at (1,0.5) (mu) {$\mu$};
                    \node at (0.5,-0.5) (P) {$P$};
                    \draw[->] (e) -- (mu);
                    \draw[<-] (P) -- ++(-0.75,0);
                    \draw[->] (P) -- ++(0.75,0);
                \end{scope}
            \end{tikzpicture}
        \end{center}
    \end{lem}

    Note that if $e$ and $\mu$ are chosen such that $\mu(e) = 1$, then this lemma implies that $\epsilon(e)\mu(i) = \mathrm{dim}(H)$.
    
\subsection{Ribbon Hopf Algebras}

\begin{dff}
    A \emph{quasitriangular} Hopf algebra is a pair $(H,R)$, where $H$ is a finite dimensional Hopf algebra, and $R\in H\otimes H$, called a universal R-matrix, is an invertible element such that

    \begin{center}
        \begin{tikzpicture}
            \node at (0,0) (R) {$R$};
            \node at (0.5,0.2) {\tiny{$1$}};
            \node at (0,-1) (D) {$\Delta$};
            \node at (1,0) (MU) {$M$};
            \node at (1,-1) (ML) {$M$};
            \draw[->] (R) -- (MU);
            \draw[->] (R) -- (ML);
            \draw[->] (D) -- (MU);
            \draw[->] (D) -- (ML);
            \draw[<-] (D) -- ++(-0.75,0);
            \draw[->] (MU) -- ++(0.75,0);
            \draw[->] (ML) -- ++(0.75,0);
            \node at (2.5,-0.5) {$=$};
            \begin{scope}[shift = {(4,0)}]
                \node at (0,-1) (R) {$R$};
                \node at (0.25,-0.6) {\tiny{$1$}};
            \node at (0,0) (D) {$\Delta^{\mathrm{cop}}$};
            \node at (1,0) (MU) {$M$};
            \node at (1,-1) (ML) {$M$};
            \draw[->] (R) -- (MU);
            \draw[->] (R) -- (ML);
            \draw[->] (D) -- (MU);
            \draw[->] (D) -- (ML);
            \draw[<-] (D) -- ++(-0.75,0);
            \draw[->] (MU) -- ++(0.75,0);
            \draw[->] (ML) -- ++(0.75,0);
            \end{scope}
        \end{tikzpicture}\vspace{0.25cm}

        \begin{tikzpicture}
            \node at (0,0) (R) {$R$};
            \node at (0.5,0.2) {\tiny{$1$}};
            \node at (1,0) (D) {$\Delta$};
            \draw[->] (R) -- (D);
            \draw[->] (R) -- ++(-45:0.75);
            \draw[->] (D) -- ++(45:0.75);
            \draw[->] (D) -- ++(-45:0.75);
            \node at (2.5,0) {$=$};
            \begin{scope}[shift = {(3.5,0)}]
                \node at (0,0.5) (RU) {$R$};
                \node at (0.25,0.9) {\tiny{$1$}};
                \node at (0.25,-0.1) {\tiny{$1$}};
                \node at (0,-0.5) (RL) {$R$};
                \node at (1,-0.5) (M) {$M$};
                \draw[->] (RU) -- ++(45:0.75);
                \draw[->] (RU) -- (M);
                \draw[->] (RL) -- ++(45:1.5);
                \draw[->] (RL) -- (M);
                \draw[->] (M) -- ++(0.75,0);
            \end{scope}
            \begin{scope}[shift = {(7,0)}]
                \node at (0,0) (R) {$R$};
            \node at (0.25,0.4) {\tiny{$1$}};
            \node at (1,0) (D) {$\Delta$};
            \draw[->] (R) -- (D);
            \draw[->] (R) -- ++(45:0.75);
            \draw[->] (D) -- ++(45:0.75);
            \draw[->] (D) -- ++(-45:0.75);
            \node at (2.5,0) {$=$};
            \begin{scope}[shift = {(3.5,0)}]
                \node at (0,0.5) (RU) {$R$};
                \node at (0.4,0.7) {\tiny{$1$}};
                \node at (0.25,-0.1) {\tiny{$1$}};
                \node at (0,-0.5) (RL) {$R$};
                \node at (1,0.5) (M) {$M^{\mathrm{op}}$};
                \draw[->] (RU) -- ++(-45:1.5);
                \draw[->] (RU) -- (M);
                \draw[->] (RL) -- ++(-45:0.75);
                \draw[->] (RL) -- (M);
                \draw[->] (M) -- ++(0.75,0);
            \end{scope}
            \end{scope}
        \end{tikzpicture}
    \end{center}

    where the little 1 represents the first leg of the R-matrix.
\end{dff}

The following statements hold in any quasitriangular Hopf algebra:

\begin{center}
    \begin{tikzpicture}
        \node at (0,0) (RI) {$R^{-1}$};
        \node at (0.25,0.4) {\tiny{$1$}};
        \node at (1.25,0) {$=$};
        \node at (2,0) (R) {$R$};
        \node at (2.25,0.4) {\tiny{$1$}};
        \node[above right =0.25 of R] (S) {$S$};
        \draw[->] (R) -- (S);
        \draw[->] (RI) -- ++(45:0.75);
        \draw[->] (RI) -- ++(-45:0.75);
        \draw[->] (R) -- ++(-45:0.75);
        \draw[->] (S) -- ++(0.75,0);
        \node at (4,0) {$=$};
        \node at (4.75,0) (R2) {$R$};
        \node at (5,0.4) {\tiny{$1$}};
        \node[below right =0.25 of R2] (S2) {$S^{-1}$};
        \draw[->] (R2) -- (S2);
        \draw[->] (R2) -- ++(45:0.75);
        \draw[->] (S2) -- ++(0.75,0);
    \end{tikzpicture}\vspace{0.25cm}

    \begin{tikzpicture}
        \node at (0,0) (R1) {$R$};
        \node at (0.25,0.4) {\tiny{$1$}};
        \node at (1.25,0) {$=$};
        \node at (2,0) (R) {$R$};
        \node at (2.25,0.4) {\tiny{$1$}};
        \node[above right =0.25 of R] (S1) {$S$};
        \node[below right =0.25 of R] (S2) {$S$};
        \draw[->] (R) -- (S1);
        \draw[->] (R1) -- ++(45:0.75);
        \draw[->] (R1) -- ++(-45:0.75);
        \draw[->] (R) -- (S2);
        \draw[->] (S1) -- ++(0.75,0);
        \draw[->] (S2) -- ++(0.75,0);
    \begin{scope}[shift = {(6,0)}]
        \node at (0,0) (R) {$R$};
        \node at (0.25,0.4) {\tiny{$1$}};
        \node[above right =0.25 of R] (e) {$\epsilon$};
        \draw[->] (R) -- (e);
        \draw[->] (R) -- ++(-45:0.75);
        \node at (1,0) {$=$};
        \node at (1.75,0) (R2) {$R$};
        \node at (2,0.4) {\tiny{$1$}};
        \node[below right =0.25 of R2] (e) {$\epsilon$};
        \draw[->] (R2) -- (e);
        \draw[->] (R2) -- ++(45:0.75);
        \node at (3,0) {$=$};
        \node at (3.5,0) (i) {$i$};
        \draw[->] (i) -- ++(0.75,0);
    \end{scope}
    \end{tikzpicture}
\end{center}

An important element in a quasitriangular Hopf algebra is the \emph{Drinfeld element}
\begin{center}
    \begin{tikzpicture}
        \node at (-1,0) {$u$};
        \node at (-0.5,0) {$:=$};
        \node at (0,0) (R) {$R$};
        \node at (0.5,0.4) {\tiny{$1$}};
        \node at (1.25,0.5) (S) {$S$};
        \node at (2,0) (M) {$M$};
        \draw[->] (R) .. controls +(0.5,0.5) and +(-0.5,-0.5) .. (M);
        \draw[->] (R) .. controls +(0.5,-0.5) and +(-0.5,-0.5) .. (S);
        \draw[->] (S) -- (M);
        \draw[->] (M) -- ++(0.75,0);
    \end{tikzpicture}
\end{center}

The Drinfeld element $u$ is invertible and $S^2(x) = uxu^{-1}$ for any $x\in H$. Moreover, $S(u)u = uS(u)$ is central in $H$.

\begin{dff}
    A \emph{ribbon} Hopf algebra is a triple $(H,R,v)$ where $(H,R)$ is a quasitriangular Hopf algebra and $v$ is an invertible central element in $H$, called the ribbon element, satisfying
    \[ v^2 = uS(u),\qquad S(v) = v,\qquad \epsilon(v) = 1,\qquad \Delta(v) = (v\otimes v) (R_{21}R)^{-1}\]
\end{dff}

\begin{prop}\label{lem:uequalsv}
    For an involutory Hopf algebra, the Drinfeld element $u$ satisfies $u = S(u)$.
\end{prop}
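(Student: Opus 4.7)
The plan is to read the displayed diagram for $u$ as the explicit element $u = \sum S(R'')R'$ where $R = \sum R'\otimes R''$ is the universal R-matrix, and then compute $S(u)$ algebraically by applying $S$ to both sides. Using the anti-algebra property $S(ab) = S(b)S(a)$ together with the involutory hypothesis $S^2 = \mathrm{id}$ gives $S(u) = \sum S(R')\,S^2(R'') = \sum S(R')R''$. The proposition thus reduces to the element-level identity $\sum S(R'')R' = \sum S(R')R''$ in $H$.

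The main tools are the two quasitriangularity identities displayed just after the definition of a quasitriangular Hopf algebra, namely $(S\otimes S)(R) = R$ and $(S\otimes\mathrm{id})(R) = R^{-1} = (\mathrm{id}\otimes S^{-1})(R)$. In the involutory setting, $S^{-1} = S$ collapses the latter pair to the tensor equality $\sum S(R')\otimes R'' = \sum R'\otimes S(R'')$ in $H\otimes H$. Multiplying this equality via $M$ and via $M\circ\tau$ produces alternate expressions for both sides: $u = \sum S(R'')R' = \sum R''S(R')$ and $S(u) = \sum S(R')R'' = \sum R'S(R'')$, so four equivalent formulas are available to manipulate.

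The crucial step is then a diagrammatic manipulation: insert $(S\otimes S)(R) = R$ onto both outgoing legs of $R$ in the diagram for $u$, collapse the resulting paired $S$'s on the leg already carrying an antipode using $S^2 = \mathrm{id}$, and recognize the resulting diagram as one of the alternate forms of $S(u)$ identified above. The main obstacle I anticipate is that $(S\otimes S)(R) = R$ applied in isolation to the $u$-diagram returns another form equal to $u$ rather than $S(u)$, so the identity must be combined with $(S\otimes\mathrm{id})(R) = (\mathrm{id}\otimes S)(R)$ in precisely the right order to force the $R'\leftrightarrow R''$ swap that distinguishes $u$ from $S(u)$. A conceptual fallback, should the direct manipulation prove stubborn, is to show that $u$ fulfills the defining axioms of a ribbon element: centrality follows from $S^2(x) = uxu^{-1}$ together with $S^2 = \mathrm{id}$; the normalization $\epsilon(u) = 1$ follows from $(\epsilon\otimes\mathrm{id})R = 1$; and the coproduct identity $\Delta(u) = (u\otimes u)(R_{21}R)^{-1}$ is standard. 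The remaining ribbon relation $v^2 = uS(u)$ applied to the central element $u$ itself then forces $u = S(u)$ up to the choice of a central involution, which is eliminated by the normalization.
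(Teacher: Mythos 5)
Your reduction of the statement to the element identity $\sum S(R'')R' = \sum S(R')R''$ is correct, and so is the bookkeeping that produces the four expressions $u=\sum S(R'')R'=\sum R''S(R')$ and $S(u)=\sum S(R')R''=\sum R'S(R'')$. The gap sits exactly at the step you yourself flag as crucial: the identities $(S\otimes S)(R)=R$ and $(S\otimes\mathrm{id})(R)=(\mathrm{id}\otimes S)(R)=R^{-1}$ cannot bridge the two groups of expressions. Writing $P=R^{-1}$, those identities amount precisely to $(S\otimes S)(P)=P$; since $M\circ(S\otimes S)=S\circ M\circ\tau$, applying $M$ to this invariance yields only $M(P)=S(M(\tau(P)))$, i.e.\ $S(u)=S(u)$, and applying $M\circ\tau$ yields $u=u$. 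No ordering of these moves produces the needed swap $M(P)=M(\tau(P))$; the relations are already consistent with $u\neq S(u)$, so they cannot prove equality. The fallback is circular: the ribbon axiom $v^2=uS(u)$ with $v=u$ \emph{is} the statement $u=S(u)$ (cancel the invertible $u$), and verifying that $u$ is a ribbon element would in any case require checking $S(u)=u$ as one of the axioms. Even granting $\Delta(u)=(u\otimes u)(R_{21}R)^{-1}$, the most one extracts formally is that $uS(u)^{-1}$ is a central grouplike element; the normalization $\epsilon(u)=1$ does not eliminate it, since every grouplike has counit $1$.

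What is missing is an input beyond the formal symmetries of $R$, and the paper supplies it with the integral--cointegral pair: it feeds the two-sided cointegral $e$ into the intertwining axiom $R\,\Delta(e)=\Delta^{cop}(e)\,R$, pairs one output leg with the two-sided integral $\mu$ (normalized so that $\mu(e)=1$), and invokes the earlier lemma on cointegrals --- the one converting multiplication on one leg of $\Delta(e)$ into an antipode on the other leg --- to collapse the left-hand side to $u\cdot\mu(e)=u$ and the right-hand side to $S(u)\cdot\mu(e)=S(u)$. That mechanism, rather than any manipulation of $R$ alone, is what forces the $R'\leftrightarrow R''$ swap; if you want to salvage your route, this is the ingredient you need to add.
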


\begin{proof}
    Let $e$ and $\mu$ be a nonzero two-sided cointegral and integral, respectively, such that $\mu(e) = 1$. Then the following sequence of equalities demonstrates the result:
    \begin{center}
        \begin{tikzpicture}
            \node at (0,0) (R) {$R$};
            \node at (-1,-1) (e) {$e$};
            \node at (2,0) (mu) {$\mu$};
            \node at (0.5,0.2) {\tiny{$1$}};
            \node at (0,-1) (D) {$\Delta$};
            \node at (1,0) (MU) {$M$};
            \node at (1,-1) (ML) {$M$};
            \draw[->] (R) -- (MU);
            \draw[->] (R) -- (ML);
            \draw[->] (D) -- (MU);
            \draw[->] (D) -- (ML);
            \draw[<-] (D) -- (e);
            \draw[->] (MU) -- (mu);
            \draw[->] (ML) -- ++(0.75,0);
            \node at (2.5,-0.5) {$=$};
            \begin{scope}[shift = {(4,0)}]
            \node at (0,-1) (R) {$R$};
            \node at (-1,0) (e) {$e$};
            \node at (2,0) (mu) {$\mu$};
            \node at (0.25,-0.6) {\tiny{$1$}};
            \node at (0,0) (D) {$\Delta^{\mathrm{cop}}$};
            \node at (1,0) (MU) {$M$};
            \node at (1,-1) (ML) {$M$};
            \draw[->] (R) -- (MU);
            \draw[->] (R) -- (ML);
            \draw[->] (D) -- (MU);
            \draw[->] (D) -- (ML);
            \draw[<-] (D) -- (e);
            \draw[->] (MU) -- (mu);
            \draw[->] (ML) -- ++(0.75,0);
            \end{scope}
        \end{tikzpicture}\vspace{0.25cm}

        \begin{tikzpicture}
            \node at (0,0) (R) {$R$};
            \node at (-1,-1) (e) {$e$};
            \node at (0,-1) (D) {$\Delta$};
            \node at (0.75,0) (S) {$S$};
            \node at (2,-1) (M) {$M$};
            \node at (2,0) (mu) {$\mu$};
            \node at (0.25,0.2) {\tiny{$1$}};
            \draw[->] (e) -- (D);
            \draw[->] (D) -- (M);
            \draw[->] (D) -- (mu);
            \draw[->] (R) -- (S);
            \draw[->] (M) -- ++(0.75,0);
            \draw[->] (S) .. controls +(0.5,0) and +(-0.5,0.25) .. (M);
            \draw[->] (R) .. controls +(0.5,-0.5) and +(-0.5,0.75) .. (M);
            \node at (3,-0.5) {$=$};
            \begin{scope}[shift = {(4.5,0)}]
            \node at (-1,0) (e) {$e$};
            \node at (0,-1) (R) {$R$};
            \node at (0,0) (D) {$\Delta$};
            \node at (1,-1) (S) {$S$};
            \node at (2,-1) (M) {$M$};
            \node at (0.25,-0.8) {\tiny{$1$}};
            \node at (2,0) (mu) {$\mu$};
            \draw[->] (e) -- (D);
            \draw[->] (D) -- (mu);
            \draw[->] (R) -- (S);
            \draw[->] (D) -- (M);
            \draw[->] (M) -- ++(0.75,0);
            \draw[->] (S) .. controls +(0.5,0) and +(-0.5,0) .. (M);
            \draw[->] (R) .. controls +(0.5,-0.5) and +(-0.5,-0.5) .. (M);
            \end{scope}
        \end{tikzpicture}
    \end{center}

    The left-hand side reduces to $u$ and the right-hand side reduces to $S(u)$, as desired.
\end{proof}

The proposition above shows that any involutory quasitriangular Hopf algebra is ribbon with ribbon element $v = u$.

\subsection{The Drinfeld Double and Representations}

\begin{dff}[\cite{CHANG2019621}]
    Let $H$ be a Hopf algebra with invertible antipode. Define the \emph{Drinfeld double} of $H$ to be the vector space $D(H) = H^*\otimes H$ with the following Hopf algebra structure:

    \begin{center}
        \begin{tikzpicture}
            \node at (0,0) (iD) {$i^D$};
            \draw[->] (iD) -- ++(0.75,0);
            \node at (1.5,0) {$:=$};
            \node at (2.5,0.5) (eps) {$\epsilon$};
            \node at (2.5,-0.5) (i) {$i$};
            \draw[<-] (eps) -- ++(0.75,0);
            \draw[->] (i) -- ++(0.75,0);
        \begin{scope}[shift = {(6,0)}]
            \node at (0,0) (eD) {$\epsilon^D$};
            \draw[<-] (eD) -- ++(-0.75,0);
            \node at (1,0) {$:=$};
            \node at (2.5,0.5) (i) {$i$};
            \node at (2.5,-0.5) (eps) {$\epsilon$};
            \draw[->] (i) -- ++(-0.75,0);
            \draw[<-] (eps) -- ++(-0.75,0);
        \end{scope}
        \end{tikzpicture}\vspace{0.25cm}

        \begin{tikzpicture}
            \node at (0,0) (DD) {$\Delta^D$};
            \draw[<-] (DD) -- ++(-0.75,0);
            \draw[->] (DD) -- ++(45:0.75);
            \draw[->] (DD) -- ++(-45:0.75);
            \node at (1.5,0) {$:=$};
            \node at (3,0.5) (M) {$M$};
            \node at (3,-0.5) (D) {$\Delta$};
            \draw[->] (M) -- ++(-0.75,0);
            \draw[<-] (D) -- ++(-0.75,0);
            \draw[<-] (M) -- ++(30:1.5);
            \draw[<-] (M) -- ++(-30:1.5);
            \draw[->] (D) -- ++(30:1.5);
            \draw[->] (D) -- ++(-30:1.5);
        \end{tikzpicture}\vspace{0.25cm}

        \begin{tikzpicture}
            \node at (0,0) (MD) {$M^D$};
            \draw[->] (MD) -- ++(0.75,0);
            \draw[<-] (MD) -- ++(135:0.75);
            \draw[<-] (MD) -- ++(-135:0.75);
            \node at (1.5,0) {$:=$};
            \begin{scope}[shift = {(3,0)}]
                \node at (0,0.5) (DL) {$\Delta$};
                \node at (1,-0.5) (S) {$S^{-1}$};
                \node at (2,0) (ML) {$M$};
                \node at (3.5,-0.25) (MR) {$M$};
                \node at (3.5,0.25) (DR) {$\Delta^{\mathrm{cop}}$};
                \draw[<-] (DL) -- ++(-0.75,0);
                \draw[->] (DL) -- ++(0,-1) -- (S);
                \draw[->] (DL) -- ++(2,0) -- (ML);
                \draw[->] (S) -- ++(1,0) -- (ML);
                \draw[->] (ML) -- ++(-2.25,0) -- ++(0,-0.5) -- ++(-0.5,0);
                \draw[<-] (MR) -- ++(0,-0.5) -- ++(-4.25,0);
                \draw[->] (DR) -- ++(0,0.5) -- ++(-4.25,0);
                \draw[->] (DR) -- (ML);
                \draw[->] (DL) -- ++(2.25,-1) -- ++(0.5,0) -- (MR);
                \draw[<-] (DR) -- ++(0.75,0);
                \draw[->] (MR) -- ++(0.75,0);
            \end{scope}
        \end{tikzpicture}\vspace{0.25cm}

        \begin{tikzpicture}
            \node at (0,0) (SD) {$S^D$};
            \draw[<-] (SD) -- ++(-0.75,0);
            \draw[->] (SD) -- ++(0.75,0);
            \node at (1.5,0) {$:=$};
            \begin{scope}[shift = {(3,0)}]
                \node at (0,0.25) (SUL) {$S^{-1}$};
                \node at (0,-0.25) (SBL) {$S$};
                \node at (1,-0.25) (D) {$\Delta$};
                \node at (2.5,0.25) (M) {$M$};
                \node at (1.75,-0.75) (SB) {$S^{-1}$};
                \draw[->] (SUL) -- ++(-0.75,0);
                \draw[<-] (SBL) -- ++(-0.75,0);
                \draw[->] (M) -- (SUL);
                \draw[->] (SBL) -- (D);
                \draw[->] (D) -- ++(0,-0.5) -- (SB);
                \draw[->] (SB) -- ++(0.75,0) -- (M);
                \draw[->] (D) -- ++(0,1) -- ++(1.5,0) -- (M);
                \draw[<-] (M) -- ++(0.75,0);
                \draw[->] (D) -- ++(2.25,0);
            \end{scope}
        \end{tikzpicture}
    \end{center}

    \begin{prop}[\cite{Drinfeld88}]
        The Drinfeld double $D(H)$ of a finite dimensional Hopf algebra $H$ is quasitriangular with R-matrix
        \begin{center}
            \begin{tikzpicture}
                \node at (0,0) (RD) {$R^D$};
                \draw[->] (RD) -- ++(45:0.75);
                \draw[->] (RD) -- ++(-45:0.75);
                \node at (0.25,0.4) {\tiny{$1$}};
                \node at (1.5,0) {$:=$};
                \begin{scope}[shift = {(2.5,0)}]
                    \node at (0,1) (eps) {$\epsilon$};
                    \node at (0,-1) (i) {$i$};
                    \draw[<-] (eps) -- ++(0.75,0);
                    \draw[->] (i) -- ++(0.75,0);
                    \draw[->] (0.75,-0.75) arc (-90:-270:0.75);
                \end{scope}
            \end{tikzpicture}
        \end{center}
    \end{prop}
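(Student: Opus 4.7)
The plan is to verify the three defining axioms of a quasitriangular Hopf algebra for $R^D$, which in coordinates (as read off from the diagram) is
\[ R^D = \sum_i (\epsilon \otimes e_i) \otimes (e^i \otimes 1_H), \]
where $\{e_i\}$ is a basis of $H$ and $\{e^i\}$ the dual basis of $H^*$; the arc in the diagrammatic definition is precisely the canonical element $\sum_i e_i \otimes e^i$. I would either work diagrammatically throughout, invoking the axioms of $H$ together with the dualities $\sum_i (ae_i) \otimes e^i = \sum_i e_i \otimes (e^i \circ L_a)$ and $\sum_i \Delta(e_i) \otimes e^i = \sum_{i,j} e_i \otimes e_j \otimes e^i e^j$, or reduce to components and check the axioms generator-by-generator.

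The first task is the quasi-cocommutativity relation $R^D \cdot \Delta^D(x) = \Delta^{D,\mathrm{cop}}(x) \cdot R^D$. Since $D(H)$ is generated as an algebra by elements of the form $\epsilon \otimes h$ and $f \otimes 1_H$, it suffices to verify this on the two families. For $x = \epsilon \otimes h$ the relation unpacks into a statement about the coproduct of $h$ commuting past the canonical element $\sum_i e_i \otimes e^i$, which is the bialgebra axiom for $H$ dualized against $H^*$; for $x = f \otimes 1_H$ one gets the mirrored statement, exchanging the roles of $H$ and $H^*$.

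Next, I would verify $(\Delta^D \otimes \mathrm{id})(R^D) = R^D_{13} R^D_{23}$. Applying $\Delta^D$ to the first tensor factor of $R^D$ splits $e_i$ via $\Delta$ in $H$ while leaving the $H^*$-component as $\epsilon \otimes \epsilon$ (since $\Delta^D$ multiplies the $H^*$-parts, and $\epsilon$ is the unit of $H^*$). On the right-hand side, $R^D_{13} R^D_{23}$ uses $M^D$ in the middle factor; the $S^{-1}$ and the $\epsilon, i$ factors prescribed by $M^D$ collapse via the counit axiom and the unit axiom, leaving $\sum_{i,j} (\epsilon \otimes e_i) \otimes (\epsilon \otimes e_j) \otimes (e^i e^j \otimes 1_H)$, which equals the left-hand side by the duality identity above. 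The third axiom $(\mathrm{id} \otimes \Delta^D)(R^D) = R^D_{13} R^D_{12}$ is treated symmetrically, swapping the roles of $M$ and $\Delta$ (and hence of $H$ and $H^*$).

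The hard part will be the bookkeeping of the $S^{-1}$ and the $\epsilon,i$ factors introduced by $M^D$ and $\Delta^D$, and ensuring that when they act on the specific elements $\epsilon \otimes e_i$ and $e^i \otimes 1_H$, whose $H$-part or $H^*$-part is trivial, they collapse cleanly via the (co)unit axioms. Once this bookkeeping is in place, each axiom reduces to a short chain of identities built from associativity, coassociativity, the bialgebra axiom, and the defining property of the dual pairing $\langle e^i, e_j \rangle = \delta^i_j$.
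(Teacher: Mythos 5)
The paper does not actually prove this proposition: it is stated with a citation to Drinfel'd's original paper and no argument is given, so there is no in-paper proof to compare against. Your plan is the standard direct verification (as in Drinfel'd or Radford's book), and its overall structure is sound: reading $R^D$ off the diagram as $\sum_i (\epsilon\otimes e_i)\otimes(e^i\otimes 1_H)$ is correct, reducing quasi-cocommutativity to the algebra generators $\epsilon\otimes h$ and $f\otimes 1_H$ is legitimate since the relation $R\,\Delta(x)=\Delta^{\mathrm{cop}}(x)\,R$ is multiplicative in $x$, and the two coproduct axioms do collapse, exactly as you say, to the dualities $\sum_i\Delta(e_i)\otimes e^i=\sum_{i,j}e_i\otimes e_j\otimes e^ie^j$ and its mirror, because the cross-terms of $M^D$ and $\Delta^D$ trivialize on elements whose $H$- or $H^*$-component is the (co)unit.

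Two points deserve attention. First, the paper's definition of quasitriangular explicitly requires $R$ to be \emph{invertible}, and your plan never addresses invertibility of $R^D$; you should either exhibit $(S^D\otimes\mathrm{id})(R^D)=\sum_i(\epsilon\otimes S(e_i))\otimes(e^i\otimes 1_H)$ as a two-sided inverse directly, or first check $(\epsilon^D\otimes\mathrm{id})(R^D)=1^D$ and derive invertibility from the coproduct axioms in the usual way. Second, your description of the generator case $x=\epsilon\otimes h$ as ``the bialgebra axiom for $H$ dualized against $H^*$'' undersells what happens there: the second tensor factor of $\Delta^{D,\mathrm{cop}}(\epsilon\otimes h)\,R^D$ contains the product $(\epsilon\otimes h_{(1)})(e^i\otimes 1_H)$, which must be straightened using the defining cross-relation of $M^D$ (the term with $S^{-1}$), and the resulting $S^{-1}$ is cancelled against the other leg only via the antipode axiom. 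This is precisely the point where the double's multiplication is \emph{designed} to make the axiom hold, and it is the one step of the verification that is not pure bookkeeping; a complete write-up must carry it out explicitly rather than fold it into the duality identities.
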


    \begin{prop}\label{prop:doubleintegral}
        Let $H$ be an involutory Hopf algebra with $\mu$ a nonzero integral and $e$ a nonzero cointegral such that $\mu(e) = 1$. Then $\lambda^D := e\otimes \mu$ is a nonzero two-sided integral for the Drinfeld double $D(H)$, and $\ell^D := \mu\otimes e$ is a nonzero two-sided cointegral for $D(H)$ such that $(e\otimes \mu)(\mu\otimes e) = 1$.
    \end{prop}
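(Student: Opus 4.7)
The proposition has three parts: (i) $\lambda^D$ is a two-sided integral; (ii) $\ell^D$ is a two-sided cointegral; (iii) $\lambda^D(\ell^D)=1$. Part (iii) is immediate — viewing $\lambda^D = e \otimes \mu$ as an element of $H \otimes H^* \cong D(H)^*$ and evaluating on $\ell^D = \mu \otimes e \in D(H)$ via the canonical pairing gives $\mu(e) \cdot \mu(e) = 1$ by the normalization hypothesis.

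For part (i), I would first unpack the diagrammatic definition of $\Delta^D$ to obtain the Sweedler-style formula $\Delta^D(f \otimes h) = \sum (f_{(2)} \otimes h_{(1)}) \otimes (f_{(1)} \otimes h_{(2)})$, where $\Delta_{H^*}(f) = \sum f_{(1)} \otimes f_{(2)}$ is dual to multiplication in $H$. The left integral condition then reduces to the equation
\[
\textstyle \big(\sum f_{(1)}(e)\, f_{(2)}\big) \otimes \big(\sum \mu(h_{(2)})\,h_{(1)}\big) \;=\; f(e)\mu(h)\,(\epsilon \otimes 1).
\]
On the $H$-factor this is the left integral property $\sum \mu(h_{(2)}) h_{(1)} = \mu(h)\cdot 1$ of $\mu$. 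On the $H^*$-factor, $\sum f_{(1)}(e) f_{(2)}$ is the element of $H^*$ sending $k \mapsto f(ek)$, which equals $f(e)\epsilon(k)$ by the right cointegral property $ek = \epsilon(k) e$ of the two-sided cointegral $e$. The right integral property is verified symmetrically, using the right integral property of $\mu$ and the left cointegral property of $e$.

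For part (ii), I would unpack $M^D$ from its diagram to a formula of the shape $(f \otimes h)(g \otimes k) = \sum f \cdot \phi_{h,g} \otimes h_{(2)} k$, where $\phi_{h,g} \in H^*$ is the form $y \mapsto g(S^{-1}(h_{(3)})\, y\, h_{(1)})$. Setting $(g,k) = (\mu, e)$ for the left cointegral check: the $H$-factor collapses via $h_{(2)} e = \epsilon(h_{(2)})e$, effectively replacing $\Delta^{(2)}(h)$ by $\Delta(h)$. The resulting $H^*$-factor $y \mapsto \sum \mu(S^{-1}(h_{(2)})\, y\, h_{(1)})$ simplifies using $S^{-1} = S$ (involutory), the trace identity $\mu(ab) = \mu(ba)$ established in the paper's preceding Lemma, and the antipode axiom $\sum h_{(1)} S(h_{(2)}) = \epsilon(h) 1$, becoming $\epsilon(h)\mu$. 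Combined with $f \cdot \mu = f(1)\mu$ (the integral property in $H^*$), this yields $(f \otimes h)\cdot \ell^D = f(1)\epsilon(h)\,\ell^D = \epsilon^D(f \otimes h)\,\ell^D$, the left cointegral property. For the right cointegral property, one computes $(\mu \otimes e)(f \otimes h)$ analogously, but now $\Delta^{(2)}(e)$ appears and does not collapse by a single application of the cointegral property; instead one uses the exchange identities $\sum e_{(1)} x \otimes e_{(2)} = \sum e_{(1)} \otimes e_{(2)} S(x)$ from the same Lemma to obtain the analogous simplification.

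The main obstacle is the right cointegral check: unlike the left case where $h_{(2)} e$ collapses directly, the iterated coproduct of $e$ must be manipulated via the exchange identities combined with the antipode axioms. Accurately translating the paper's intricate diagrammatic definitions of $\Delta^D$ and $M^D$ into index notation — in particular tracking the $S^{-1}$ and $\Delta^{cop}$ appearing in the Drinfeld double multiplication — is the other delicate step, but the involutory trace property of $\mu$ is the key structural input that makes the whole computation close.
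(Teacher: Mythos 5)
Your proposal is correct and follows essentially the same route as the paper: both verify the left integral condition for $e\otimes\mu$ and the left cointegral condition for $\mu\otimes e$ by unpacking $\Delta^D$ and $M^D$ and invoking the (co)integral properties of $e$ and $\mu$, the cyclicity $\mu(ab)=\mu(ba)$, and the antipode axiom --- you in Sweedler notation, the paper in tensor diagrams. The only divergence is that where you propose to verify the right-sided properties by a second direct computation (via the exchange identities for $\Delta(e)$), the paper simply notes that once the left-sided properties and nonvanishing are established ``the rest will follow,'' implicitly using that $D(H)$ is involutory so left (co)integrals are automatically two-sided.
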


    \begin{proof}
        We show that $e\otimes \mu$ is a left integral for $D(H)$ and $\mu\otimes e$ is a left cointegral. That they are nonzero follows from $e$ and $\mu$ being nonzero. The rest will follow.

        \begin{center}
        \begin{tikzpicture}
            \node at (0,0) (DD) {$\Delta^D$};
            \node[below right =0.25 of DD] (lambda) {$\lambda^D$};
            \draw[->] (DD) -- (lambda);
            \draw[<-] (DD) -- ++(-0.75,0);
            \draw[->] (DD) -- ++(45:0.75);
            \node at (1.5,0) {$=$};
            \begin{scope}[shift = {(3,0)}]
                \node at (0,0.5) (M) {$M$};
            \node at (0,-0.5) (D) {$\Delta$};
            \node[below right=0.75 of M] (e) {$e$};
            \node[below right =0.75 of D] (mu) {$\mu$};
            \draw[->] (M) -- ++(-0.75,0);
            \draw[<-] (D) -- ++(-0.75,0);
            \draw[<-] (M) -- (e);
            \draw[<-] (M) -- ++(30:1.5);
            \draw[->] (D) -- (mu);
            \draw[->] (D) -- ++(30:1.5);
            \node at (2,0) {$=$};
            \begin{scope}[shift = {(3.5,0)}]
                \node at (0,0.25) (e) {$e$};
                \node at (0,-0.25) (mu) {$\mu$};
                \node at (0.5,0.25) (eps) {$\epsilon$};
                \node at (0.5,-0.25) (i) {$i$};
                \draw[->] (e) -- ++(-0.75,0);
                \draw[<-] (mu) -- ++(-0.75,0);
                \draw[<-] (eps) -- ++(0.75,0);
                \draw[->] (i) -- ++(0.75,0);
            \end{scope}
            \end{scope}
        \end{tikzpicture}\vspace{0.5cm}

        \begin{tikzpicture}
            \node at (0,0) (MD) {$M^D$};
            \node[below left = 0.25 of MD] (ellD) {$\ell^D$};
            \draw[->] (ellD) -- (MD);
            \draw[<-] (MD) -- ++(135:0.75);
            \draw[->] (MD) -- ++(0.75,0);
            \node at (1.5,0) {$=$};
            \begin{scope}[shift = {(3,0)}]
                \node at (0,0.5) (DL) {$\Delta$};
                \node at (1,-0.5) (S) {$S$};
                \node at (2,0) (ML) {$M$};
                \node at (3.5,-0.25) (MR) {$M$};
                \node at (3.5,0.25) (DR) {$\Delta^{\mathrm{cop}}$};
                \node at (-1,-0.5) (mu) {$\mu$};
                \node at (-1,-0.75) (e) {$e$};
                \draw[<-] (DL) -- ++(-0.75,0);
                \draw[->] (DL) -- ++(0,-1) -- (S);
                \draw[->] (DL) -- ++(2,0) -- (ML);
                \draw[->] (S) -- ++(1,0) -- (ML);
                \draw[->] (ML) -- ++(-2.25,0) -- ++(0,-0.5) -- ++(mu);
                \draw[<-] (MR) -- ++(0,-0.5) -- ++(e);
                \draw[->] (DR) -- ++(0,0.5) -- ++(-4.25,0);
                \draw[->] (DR) -- (ML);
                \draw[->] (DL) -- ++(2.25,-1) -- ++(0.5,0) -- (MR);
                \draw[<-] (DR) -- ++(0.75,0);
                \draw[->] (MR) -- ++(0.75,0);
            \end{scope}
            \begin{scope}[shift = {(3,-2)}]
                \node at (-1.5,0) {$=$};
                \node at (0,0.5) (DL) {$\Delta$};
                \node at (1,-0.5) (S) {$S$};
                \node at (2,0) (ML) {$M$};
                \node at (3.5,-0.25) (e) {$e$};
                \node at (3.5,0.25) (DR) {$\Delta^{\mathrm{cop}}$};
                \node at (-1,-0.5) (mu) {$\mu$};
                \draw[<-] (DL) -- ++(-0.75,0);
                \draw[->] (DL) -- ++(0,-1) -- (S);
                \draw[->] (DL) -- ++(2,0) -- (ML);
                \draw[->] (S) -- ++(1,0) -- (ML);
                \draw[->] (ML) -- ++(-2.25,0) -- ++(0,-0.5) -- ++(mu);
                \draw[->] (DR) -- ++(0,0.5) -- ++(-4.25,0);
                \draw[->] (DR) -- (ML);
                \draw[<-] (DR) -- ++(0.75,0);
                \draw[->] (e) -- ++(0.75,0);
            \begin{scope}[shift = {(0,-2)}]
                \node at (-1.5,0) {$=$};
                \node at (0,0.5) (DL) {$\Delta$};
                \node at (1,-0.5) (S) {$S$};
                \node at (2,0) (ML) {$M$};
                \node at (3.5,-0.25) (e) {$e$};
                \node at (3.5,0.25) (DR) {$\Delta^{\mathrm{cop}}$};
                \node at (-1,-0.5) (mu) {$\mu$};
                \draw[<-] (DL) -- ++(-0.75,0);
                \draw[->] (DL) -- ++(0,-1) -- (S);
                \draw[->] (DL) -- (ML);
                \draw[->] (S) -- ++(1,0) -- (ML);
                \draw[->] (ML) -- ++(0,0.5) -- (mu);
                \draw[->] (DR) -- ++(0,0.5) -- ++(-4.25,0);
                \draw[->] (DR) -- (ML);
                \draw[<-] (DR) -- ++(0.75,0);
                \draw[->] (e) -- ++(0.75,0);
            \begin{scope}[shift = {(0,-2)}]
                \node at (-1.5,0) {$=$};
                \node at (0,0.5) (eps) {$\epsilon$};
                \node at (1,-0.25) (e) {$e$};
                \node at (1,0.25) (D) {$\Delta^{\mathrm{cop}}$};
                \node at (-1,-0.5) (mu) {$\mu$};
                \draw[<-] (eps) -- ++(-0.75,0);
                \draw[->] (D) -- (mu);
                \draw[->] (D) -- ++(0,0.5) -- ++(-1.75,0);
                \draw[<-] (D) -- ++(0.75,0);
                \draw[->] (e) -- ++(0.75,0);
            \begin{scope}[shift = {(0,-2)}]
                \node at (-1.5,0) {$=$};
                \node at (0,-0.25) (eps) {$\epsilon$};
                \node at (0,0.25) (i) {$i$};
                \node at (1,-0.25) (e) {$e$};
                \node at (1,0.25) (mu) {$\mu$};
                \draw[->] (i) -- ++(-0.75,0);
                \draw[<-] (eps) -- ++(-0.75,0);
                \draw[->] (e) -- ++(0.75,0);
                \draw[<-] (mu) -- ++(0.75,0);
            \end{scope}
            \end{scope}
            \end{scope}
            \end{scope}
        \end{tikzpicture}
        \end{center}
    \end{proof}

    Recall that a representation $\rho:D(H)\to \Endo(V)$ of the Drinfeld double of a Hopf algebra $H$ is the data of a representation $\rho_H:H\to \Endo(V)$ and a representation $\rho_{H^*}:H^*\to \Endo(V)$ satisfying:
    \[ \rho(M^D(x\otimes y)) = \rho(x)\rho(y)\]

    Note that $H$ and $H^{*,\mathrm{cop}}$ embed naturally as Hopf algebras into $D(H)$ via $\epsilon\otimes H$ and $H^{*,\mathrm{cop}}\otimes i$, respectively. We can express the condition above more concretely in the following way: let $x\in H$ and $f\in H^*$, then
    \[\rho(fx) = \rho_{H^*}(f)\rho_{H}(x)\]
    \[\rho(xf) = \rho_H(x)\rho_{H^*}(f) = \sum \rho_{H^*}(f')\rho_{H}(x')\]
    where $\sum f'\otimes x'$ is the element of $H^*\otimes H$ given by:

    \begin{center}
        \begin{tikzpicture}
            \node at (-1,0.5) (x) {$x$};
            \node at (-1,-0.5) (f) {$f$};
            \node at (0,0.5) (DL) {$\Delta$};
                \node at (1,-0.5) (S) {$S^{-1}$};
                \node at (2,0) (ML) {$M$};
                \draw[<-] (DL) -- ++(-0.75,0);
                \draw[->] (DL) -- ++(0,-1) -- (S);
                \draw[->] (DL) -- ++(2,0) -- (ML);
                \draw[->] (S) -- ++(1,0) -- (ML);
                \draw[->] (ML) -- ++(-2.25,0) -- ++(0,-0.5) -- ++(-0.5,0);
                \draw[<-] (ML) -- ++(0.75,0);
                \draw[->] (DL) -- ++(2.25,-1) -- ++(0.5,0);
        \end{tikzpicture}
    \end{center}

    We introduce a tensor-like notation for representations which will come in handy throughout this paper. We will notate a representation $\rho$ in the following way:

    \begin{center}
        \begin{tikzpicture}
            \node[darkgreen] at (0,0) (rho) {\fbox{$\rho$}};
            \draw[<-] (rho) -- ++(-0.75,0);
            \draw[->,darkgreen] (rho) -- ++(0.75,0);
        \end{tikzpicture}
    \end{center}

    indicating that a representation has input an element of an algebra, and outputs an endomorphism. We will also use a green $\textcolor{darkgreen}{M}$ to denote multiplication in $\Endo(V)$. Any other operations or functions on $\Endo(V)$ will also be colored green for consistency. The above equalities relating representations of $H$ and $H^*$ to $D(H)$ are thus given as follows:

    \begin{center}
        \begin{tikzpicture}
            \node at (0,0) (MD) {$M^D$};
            \node[darkgreen] at (1,0) (rho) {\fbox{$\rho$}};
            \draw[<-] (MD) -- ++(135:0.75);
            \draw[<-] (MD) -- ++(-135:0.75);
            \draw[->] (MD) -- (rho);
            \draw[->, darkgreen] (rho) -- ++(0.75,0);
            \node at (2.5,0) {$=$};
            \node at (2.5,-2) {$=$};
            \begin{scope}[shift = {(4,0)}]
                \node[darkgreen] at (0,0.25) (rhoU) {\fbox{$\rho$}};
                \node[darkgreen] at (0,-0.25) (rhoL) {\fbox{$\rho$}};
                \node[darkgreen] at (1,0) (M) {$M$};
                \draw[<-] (rhoU) -- ++(-0.75,0);
                \draw[<-] (rhoL) -- ++(-0.75,0);
                \draw[->,darkgreen] (rhoU) -- (M);
                \draw[->,darkgreen] (rhoL) -- (M);
                \draw[->,darkgreen] (M) -- ++(0.75,0);
            \end{scope}
            \begin{scope}[shift = {(4,-2)}]
                \node at (0,0.5) (DL) {$\Delta$};
                \node at (1,-0.5) (S) {$S^{-1}$};
                \node at (2,0) (ML) {$M$};
                \node at (3.5,-0.25) (MR) {$M$};
                \node at (3.5,0.25) (DR) {$\Delta^{\mathrm{cop}}$};
                \node[darkgreen] at (4.75,0.25) (rhoHdual) {\fbox{$\rho_{H^*}$}};
                \node[darkgreen] at (4.75,-0.25) (rhoH) {\fbox{$\rho_H$}};
                \node[darkgreen] at (6,0) (MRR) {$M$};
                \draw[<-] (DL) -- ++(-0.75,0);
                \draw[->] (DL) -- ++(0,-1) -- (S);
                \draw[->] (DL) -- ++(2,0) -- (ML);
                \draw[->] (S) -- ++(1,0) -- (ML);
                \draw[->] (ML) -- ++(-2.25,0) -- ++(0,-0.5) -- ++(-0.5,0);
                \draw[<-] (MR) -- ++(0,-0.5) -- ++(-4.25,0);
                \draw[->] (DR) -- ++(0,0.5) -- ++(-4.25,0);
                \draw[->] (DR) -- (ML);
                \draw[->] (DL) -- ++(2.25,-1) -- ++(0.5,0) -- (MR);
                \draw[<-] (DR) -- (rhoHdual);
                \draw[->] (MR) -- (rhoH);
                \draw[->,darkgreen] (rhoHdual) -- (MRR);
                \draw[->,darkgreen] (rhoH) -- (MRR);
                \draw[->,darkgreen] (MRR) -- ++(0.75,0);
            \end{scope}
        \end{tikzpicture}
    \end{center}

    \begin{lem}\label{lem:doubleequalities}
        The following relations hold if and only if the representations $\rho_H$ and $\rho_{H^*}$ assemble into a representation of the Drinfeld double of an involutory Hopf algebra $H$:

        \begin{center}
            \begin{tikzpicture}
                \node at (0,0.5) (D) {$\Delta$};
                \node at (0,-0.5) (M) {$M^{\mathrm{op}}$};
                \node[darkgreen] at (2.25,0.5) (rhoH) {\fbox{$\rho_H$}};
                \node[darkgreen] at (2.25,-0.5) (rhoHdual) {\fbox{$\rho_{H^*}$}};
                \node[darkgreen] at (3.5,0) (MR) {$M$};
                \draw[<-] (D) -- ++(-0.75,0);
                \draw[->] (M) -- ++(-0.75,0);
                \draw[->] (D) -- (M);
                \draw[->] (D) -- (rhoH);
                \draw[->] (rhoHdual) -- (M);
                \draw[->,darkgreen] (rhoH) -- (MR);
                \draw[->,darkgreen] (rhoHdual) -- (MR);
                \draw[->,darkgreen] (MR) -- ++(0.75,0);
                \node at (5,0) {$=$};
                \begin{scope}[shift = {(6.5,0)}]
                    \node at (0,0.5) (D) {$\Delta^{\mathrm{cop}}$};
                \node at (0,-0.5) (M) {$M$};
                \node[darkgreen] at (2.25,0.5) (rhoH) {\fbox{$\rho_H$}};
                \node[darkgreen] at (2.25,-0.5) (rhoHdual) {\fbox{$\rho_{H^*}$}};
                \node[darkgreen] at (3.5,0) (MR) {$M^{\mathrm{op}}$};
                \draw[<-] (D) -- ++(-0.75,0);
                \draw[->] (M) -- ++(-0.75,0);
                \draw[->] (D) -- (M);
                \draw[->] (D) -- (rhoH);
                \draw[->] (rhoHdual) -- (M);
                \draw[->,darkgreen] (rhoH) -- (MR);
                \draw[->,darkgreen] (rhoHdual) -- (MR);
                \draw[->,darkgreen] (MR) -- ++(0.75,0);
                \end{scope}
            \end{tikzpicture}\vspace{0.25cm}

            \begin{tikzpicture}
                \node at (0,0.5) (D) {$\Delta^{\mathrm{cop}}$};
                \node at (0,-0.5) (M) {$M$};
                \node at (1,0.5) (SU) {$S$};
                \node at (1,-0.5) (SL) {$S$};
                \node[darkgreen] at (2.25,0.5) (rhoH) {\fbox{$\rho_H$}};
                \node[darkgreen] at (2.25,-0.5) (rhoHdual) {\fbox{$\rho_{H^*}$}};
                \node[darkgreen] at (3.5,0) (MR) {$M$};
                \draw[<-] (D) -- ++(-0.75,0);
                \draw[->] (M) -- ++(-0.75,0);
                \draw[->] (D) -- (M);
                \draw[->] (D) -- (SU);
                \draw[->] (SU) -- (rhoH);
                \draw[->] (rhoHdual) -- (SL);
                \draw[->] (SL) -- (M);
                \draw[->,darkgreen] (rhoH) -- (MR);
                \draw[->,darkgreen] (rhoHdual) -- (MR);
                \draw[->,darkgreen] (MR) -- ++(0.75,0);
                \node at (5,0) {$=$};
                \begin{scope}[shift = {(6.5,0)}]
                    \node at (0,0.5) (D) {$\Delta$};
                \node at (0,-0.5) (M) {$M^{\mathrm{op}}$};
                \node at (1,0.5) (SU) {$S$};
                \node at (1,-0.5) (SL) {$S$};
                \node[darkgreen] at (2.25,0.5) (rhoH) {\fbox{$\rho_H$}};
                \node[darkgreen] at (2.25,-0.5) (rhoHdual) {\fbox{$\rho_{H^*}$}};
                \node[darkgreen] at (3.5,0) (MR) {$M^{\mathrm{op}}$};
                \draw[<-] (D) -- ++(-0.75,0);
                \draw[->] (M) -- ++(-0.75,0);
                \draw[->] (D) -- (M);
                \draw[->] (D) -- (SU);
                \draw[->] (SU) -- (rhoH);
                \draw[->] (rhoHdual) -- (SL);
                \draw[->] (SL) -- (M);
                \draw[->,darkgreen] (rhoH) -- (MR);
                \draw[->,darkgreen] (rhoHdual) -- (MR);
                \draw[->,darkgreen] (MR) -- ++(0.75,0);
                \end{scope}
            \end{tikzpicture}\vspace{0.25cm}

            \begin{tikzpicture}
                \node at (0,-0.5) (D) {$\Delta$};
                \node at (0,0.5) (M) {$M$};
                \node at (1,-0.5) (SL) {$S$};
                \node[darkgreen] at (2.25,-0.5) (rhoH) {\fbox{$\rho_H$}};
                \node[darkgreen] at (2.25,0.5) (rhoHdual) {\fbox{$\rho_{H^*}$}};
                \node[darkgreen] at (3.5,0) (MR) {$M$};
                \draw[<-] (D) -- ++(-0.75,0);
                \draw[->] (M) -- ++(-0.75,0);
                \draw[->] (D) -- (M);
                \draw[->] (D) -- (SL);
                \draw[->] (SL) -- (rhoH);
                \draw[->] (rhoHdual) -- (M);
                \draw[->,darkgreen] (rhoH) -- (MR);
                \draw[->,darkgreen] (rhoHdual) -- (MR);
                \draw[->,darkgreen] (MR) -- ++(0.75,0);
                \node at (5,0) {$=$};
                \begin{scope}[shift = {(6.5,0)}]
                    \node at (0,-0.5) (D) {$\Delta^{\mathrm{cop}}$};
                \node at (0,0.5) (M) {$M^{\mathrm{op}}$};
                \node at (1,-0.5) (SL) {$S$};
                \node[darkgreen] at (2.25,-0.5) (rhoH) {\fbox{$\rho_H$}};
                \node[darkgreen] at (2.25,0.5) (rhoHdual) {\fbox{$\rho_{H^*}$}};
                \node[darkgreen] at (3.5,0) (MR) {$M^{\mathrm{op}}$};
                \draw[<-] (D) -- ++(-0.75,0);
                \draw[->] (M) -- ++(-0.75,0);
                \draw[->] (D) -- (M);
                \draw[->] (D) -- (SL);
                \draw[->] (SL) -- (rhoH);
                \draw[->] (rhoHdual) -- (M);
                \draw[->,darkgreen] (rhoH) -- (MR);
                \draw[->,darkgreen] (rhoHdual) -- (MR);
                \draw[->,darkgreen] (MR) -- ++(0.75,0);
                \end{scope}
            \end{tikzpicture}\vspace{0.25cm}

            \begin{tikzpicture}
                \node at (0,-0.5) (D) {$\Delta^{\mathrm{cop}}$};
                \node at (0,0.5) (M) {$M^{\mathrm{op}}$};
                \node at (1,0.5) (SU) {$S$};
                \node[darkgreen] at (2.25,-0.5) (rhoH) {\fbox{$\rho_H$}};
                \node[darkgreen] at (2.25,0.5) (rhoHdual) {\fbox{$\rho_{H^*}$}};
                \node[darkgreen] at (3.5,0) (MR) {$M$};
                \draw[<-] (D) -- ++(-0.75,0);
                \draw[->] (M) -- ++(-0.75,0);
                \draw[->] (D) -- (M);
                \draw[->] (D) -- (rhoH);
                \draw[->] (rhoHdual) -- (SU);
                \draw[->] (SU) -- (M);
                \draw[->,darkgreen] (rhoH) -- (MR);
                \draw[->,darkgreen] (rhoHdual) -- (MR);
                \draw[->,darkgreen] (MR) -- ++(0.75,0);
                \node at (5,0) {$=$};
            \begin{scope}[shift = {(6.5,0)}]
                \node at (0,-0.5) (D) {$\Delta$};
                \node at (0,0.5) (M) {$M$};
                \node at (1,0.5) (SU) {$S$};
                \node[darkgreen] at (2.25,-0.5) (rhoH) {\fbox{$\rho_H$}};
                \node[darkgreen] at (2.25,0.5) (rhoHdual) {\fbox{$\rho_{H^*}$}};
                \node[darkgreen] at (3.5,0) (MR) {$M^{\mathrm{op}}$};
                \draw[<-] (D) -- ++(-0.75,0);
                \draw[->] (M) -- ++(-0.75,0);
                \draw[->] (D) -- (M);
                \draw[->] (D) -- (rhoH);
                \draw[->] (rhoHdual) -- (SU);
                \draw[->] (SU) -- (M);
                \draw[->,darkgreen] (rhoH) -- (MR);
                \draw[->,darkgreen] (rhoHdual) -- (MR);
                \draw[->,darkgreen] (MR) -- ++(0.75,0);
                \end{scope}
            \end{tikzpicture}
        \end{center}
    \end{lem}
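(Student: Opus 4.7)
The approach rests on the fact that $D(H)$ is generated as an algebra by its two natural subalgebras $\epsilon\otimes H\cong H$ and $H^*\otimes i\cong H^{*,cop}$. Consequently, once $\rho_H$ and $\rho_{H^*}$ are fixed, they assemble into a representation of $D(H)$ if and only if the single mixed multiplicativity relation
$$\rho_H(x)\,\rho_{H^*}(f) \;=\; \rho\!\left(M^D\bigl((\epsilon\otimes x)\otimes (f\otimes i)\bigr)\right)$$
holds for all $x\in H$ and $f\in H^*$. My plan is to prove the lemma by identifying each of the four diagrammatic equations as a reformulation of this single condition.

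First I would substitute $\epsilon,\,x,\,f,\,i$ into the four input legs of the diagram defining $M^D$ and simplify: the (co)unit axioms collapse several wires, the rightmost $M$ multiplies by the unit and hence becomes trivial, the right-hand $\Delta^{cop}$ reduces to splitting $f$, and the involutory hypothesis $S^{-1}=S$ replaces the inverse antipode. The surviving configuration matches one of the four stated equations of the lemma, establishing the equivalence of that equation to the representation condition.

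The remaining three equations I would derive from this one by pre- or post-composing individual input or output legs with $S$, pushing the antipodes across $M$ or $\Delta$ using the identities $S\circ M = M^{op}\circ(S\otimes S)$ and $\Delta\circ S = (S\otimes S)\circ \Delta^{cop}$, and finally cancelling pairs $S\circ S = \mathrm{id}$ via the involutory hypothesis. Each such manipulation is reversible, so all four equations are mutually equivalent, and each is equivalent to the underlying representation condition.

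The main obstacle will be careful diagrammatic bookkeeping: tracking which specific wire receives each antipode insertion and verifying that all interior antipodes cancel cleanly rather than leaving stray $S$'s in the middle of the diagram. The involutory hypothesis plays a dual role here, simultaneously removing $S^{-1}$ from the definition of $M^D$ and permitting free cancellation of doubled antipodes without keeping track of inverses.
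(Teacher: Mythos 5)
Your reduction target is the right one: the paper likewise characterizes a $D(H)$-representation by the single mixed relation $\rho_H(x)\rho_{H^*}(f)=\rho\big(M^D((\epsilon\otimes x)\otimes(f\otimes i))\big)=\sum\rho_{H^*}(f')\rho_H(x')$, i.e.\ the Drinfeld-double straightening rule. The gap is in how you connect that rule to the four displayed identities. Substituting $\epsilon$ and $i$ into $M^D$ and simplifying does \emph{not} land on any of the four equations: it lands on the straightening rule itself, whose left-hand side is the bare product $\rho_H(x)\rho_{H^*}(f)$, whereas every one of the four identities has, on \emph{both} sides, a comultiplication feeding $\rho_H$ and a multiplication absorbing the $\rho_{H^*}$ input -- they are ``smeared'' $(1,1)$-tensor versions of the straightening rule in the external $H$-legs. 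Your step 3 cannot bridge this either: pre- or post-composing legs with $S$ and pushing antipodes through via $S\circ M=M^{op}\circ(S\otimes S)$ and $\Delta\circ S=(S\otimes S)\circ\Delta^{cop}$ preserves the count of $M$'s and $\Delta$'s in the diagram, so at best it interconverts the four equations among themselves; it cannot create or remove the extra $\Delta$/$M$ pair that separates them from the bare straightening rule.

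The missing idea is the paper's one-line mechanism: compose the external $H$-input and $H$-output legs of each identity with one of the invertible ladders from the earlier lemma. On one side the ladder cancels the ambient comultiplication and multiplication (via the bialgebra and antipode axioms), leaving exactly $\rho_H(\bullet)\rho_{H^*}(\bullet)$; on the other side it produces the $M^D$-expression. Invertibility of the ladder is precisely what upgrades the implication to the ``if and only if'' the lemma asserts. Without an invertible operation playing this role, your argument shows only that the representation condition implies some consequences, not that each displayed identity is equivalent to it.
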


    \begin{proof}
        For each of these, put a ladder on each side so that either the left or the right is $\rho_H(\bullet)\rho_{H^*}(\bullet)$. The result will follow after simplifying.
    \end{proof}
\end{dff}

\begin{lem}\label{lem:traceisintegral}
    Let $H$ be an involutory Hopf algebra $H$ and let $e$ and $\mu$ be a non-zero cointegral and non-zero integral, respectively, such that $\mu(e) = 1$. Let $\rho_{\mathrm{reg}}:D(H)\to \mathrm{End}(D(H))$ be the left regular representation of $D(H)$. Let $\widetilde{T} = \mathrm{Tr}/\mathrm{dim}(H)$ where $\mathrm{Tr}$ is the usual trace of endomorphisms. Then 

    \begin{center}
        \begin{tikzpicture}
            \node[darkgreen] at (0,0) (rho) {\fbox{$\rho_{\mathrm{reg}}$}};
            \node[darkgreen] at (1,0) (T) {$\widetilde{T}$};
            \draw[<-] ([shift={(-0.5,-0.1)}]rho.center) -- ++(-0.5,0);
            \draw[->] ([shift={(-0.5,0.1)}]rho.center) -- ++(-0.5,0);
            \draw[->,darkgreen] (rho) -- (T);
            \node at (1.5,0) {$=$};
            \begin{scope}[shift = {(3,0)}]
                \node at (0,0.25) (e) {$e$};
                \node at (0,-0.25) (mu) {$\mu$};
                \draw[->] (e) -- ++(-0.75,0);
                \draw[<-] (mu) -- ++(-0.75,0);
            \end{scope}
        \end{tikzpicture}
    \end{center}
\end{lem}

\begin{proof}
Taking the trace of the left regular representation and computing yields

    \begin{center}
        \begin{tikzpicture}
            \node at (0,0.5) (DL) {$\Delta$};
                \node at (1,-0.5) (S) {$S^{-1}$};
                \node at (2,0) (ML) {$M$};
                \node at (3.5,-0.25) (MR) {$M$};
                \node at (3.5,0.25) (DR) {$\Delta^{\mathrm{cop}}$};
                \draw[<-] (DL) -- ++(-0.75,0);
                \draw[->] (DL) -- ++(0,-1) -- (S);
                \draw[->] (DL) -- ++(2,0) -- (ML);
                \draw[->] (S) -- ++(1,0) -- (ML);
                \draw (ML) -- ++(-2.25,0) -- ++(0,-0.5) -- ++(-0.75,0);
                \draw[<-] (MR) -- ++(0,-0.5) -- ++(-4.25,0);
                \draw[->] (DR) -- ++(0,0.5) -- ++(-4.25,0);
                \draw[->] (DR) -- (ML);
                \draw[->] (DL) -- ++(2.25,-1) -- ++(0.5,0) -- (MR);
                \draw[<-] (DR) -- (4.5,0.25) -- (4.5,-1.5) -- (-1,-1.5) -- (-1,-0.5);
                \draw (MR) -- (4,-0.25) -- (4,-1) -- (-0.75,-1) -- (-0.75,-0.75);
                
                \node at (5.5,0) {$=$};

                \begin{scope}[shift = {(7.5,0)}]
                    \node at (0,0.5) (DL) {$\Delta$};
                \node at (1,-0.5) (S) {$S^{-1}$};
                \node at (2,0) (ML) {$M$};
                \node at (3,-0.5) (P) {$P$};
                \node at (3.75,-0.5) (eps) {$\epsilon$};
                \node at (3.5,0.25) (DR) {$\Delta^{\mathrm{cop}}$};
                \draw[<-] (DL) -- ++(-0.75,0);
                \draw[->] (DL) -- ++(0,-1) -- (S);
                \draw[->] (DL) -- ++(2,0) -- (ML);
                \draw[->] (S) -- ++(1,0) -- (ML);
                \draw (ML) -- ++(-2.25,0) -- ++(0,-0.5) -- ++(-0.75,0);
                \draw[->] (DR) -- ++(0,0.5) -- ++(-4.25,0);
                \draw[->] (DR) -- (ML);
                \draw[->] (DL) -- ++(2.25,-1) -- (P);
                \draw[<-] (DR) -- (4.5,0.25) -- (4.5,-1.5) -- (-1,-1.5) -- (-1,-0.5);
                \draw[->] (P) -- (eps);
                \end{scope}

                \node at (5.5,-3) {$=$};

                \begin{scope}[shift = {(7.5,-3)}]
                \node at (0,0.5) (P) {$P$};
                \node at (0.75,0.5) (eps) {$\epsilon$};
                \node at (3.5,0.25) (DR) {$\Delta^{\mathrm{cop}}$};
                \draw[<-] (P) -- ++(-0.75,0);
                \draw[->] (DR) -- ++(0,0.5) -- ++(-4.25,0);
                \draw[<-] (DR) -- (4.5,0.25) -- (4.5,-1.5) -- (-1,-1.5) -- (-1,-0.5) -- (DR);
                \draw[->] (P) -- (eps);
                \end{scope}

                \node at (5.5,-5.5) {$=$};

                \begin{scope}[shift = {(7.5,-5.5)}]
                \node at (0,0.25) (PU) {$P$};
                \node at (0,-0.25) (PL) {$P$};
                \node at (0.75,0.25) (i) {$i$};
                \node at (0.75,-0.25) (eps) {$\epsilon$};
                \draw[->] (PU) -- ++(-0.75,0);
                \draw[->] (i) -- (PU);
                \draw[<-] (PL) -- ++(-0.75,0);
                \draw[->] (PL) -- (eps);
                \end{scope}
            
        \end{tikzpicture}
    \end{center}

    and by Lemma \ref{lem:uniquenessofintegrals}, $P(i)\otimes \epsilon(P) = \mathrm{dim}(H) e \otimes \mu$, which gives the result.
\end{proof}

\section{Topological Preliminaries}
\label{Topology_Sec}
We will now lay out the topological preliminaries. Recall that the invariant we will describe has input a pair of a 3-manifold and an embedded framed link. We will define \emph{Heegaard-Link diagrams} which completely determine such a pair, and vice-versa, up to some moves. 

\begin{dff}
    Let $M$ be a closed connected oriented 3-manifold. A \emph{knot} is an embedding $K:S^1\to M$. A disjoint union of knots with $m$ connected components is called a \emph{link} with $m$ components. We will not distinguish a knot from its image. A \emph{framing} for a knot $K$ is a choice of trivialization of its normal bundle in $M$. This amounts to choosing a knot parallel to $K$. A framing for a link $L$ is a framing for each of its components.
\end{dff}

We will consider \emph{oriented} knots and links. We will denote a knot $K$ with an orientation simply by $K$.

Suppose a link $L$ is embedded in a surface $\Sigma$ in a 3-manifold $M$. Then we say $L$ is endowed with the \emph{blackboard framing} if the choice of trivialization is a nonzero section of the tangent bundle of $\Sigma$.

\begin{dff}
    Given a closed connected oriented 3-manifold $M$ and an oriented framed link $L$ in $M$, a \emph{Heegaard-Link diagram} for the pair $(M,L)$ is a quadruple $(\Sigma,\alpha,\beta,\tilde L)$, where $(\Sigma,\alpha,\beta)$ is a Heegaard diagram for $M$ and $\tilde L$ is a set of simple closed curves properly embedded in $\Sigma$ such that the pair $(\tilde M, \tilde L)$ comprised of the 3-manifold $\tilde M$ built from the Heegaard diagram and $\tilde L$ treated as a framed link (induced by the blackboard framing in the Heegaard surface $\Sigma$) in $\tilde M$ is diffeomorphic with the pair $(M,L)$ where the restriction to a tubular neighborhood of $\tilde L$ is a framed diffeomorphism.
    
    We say that two Heegaard-Link diagrams $(\Sigma,\alpha,\beta,L)$ and $(\Sigma',\alpha',\beta',L')$ are \emph{equivalent} if there is an orientation-preserving diffeomorphism $f:\Sigma\to \Sigma'$ such that $f(\alpha) = \alpha'$, $f(\beta) = \beta'$, and $f$ restricted to a tubular neighborhood of $L$ is a framed diffeomorphism.
\end{dff}

\begin{rmk}
    We pause to remark that a Heegaard-Link diagram is a generalization of an ordinary link diagram. An ordinary link diagram for an oriented framed link $L$ may be considered as embedded on a sphere, with stabilizations resolving each crossing. This exactly specifies a Heegaard-Link diagram for the pair $(S^3,L)$. A general Heegaard-Link diagram is then a projection of the link on a Heegaard surface, with crossings marked (resolved with ``bridges''). 
    
    By an abuse of notation, we will use $L$ to denote the link in the original manifold and the link in a Heegaard-link diagram, as one would commonly do for links and link diagrams.
\end{rmk}

\begin{lem}\label{embedknotinsurface_lemma}
    Let $M$ be a closed connected orientable 3-manifold. Let $L$ be a framed link embedded in $M$. Then there exists a Heegaard splitting $M = H_u \cup H_l$ such that $L$ embeds in $\Sigma = H_u\cap H_l$ and the blackboard framing of $L$ agrees with the original framing.
\end{lem}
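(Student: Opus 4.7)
The plan is to produce the desired Heegaard splitting in two stages: first place $L$ on \emph{some} Heegaard surface (ignoring the framing), then stabilize the splitting further, locally near $L$, to correct the induced blackboard framing so that it matches the prescribed one.

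For the first stage, start with any Heegaard splitting $M = H_u \cup_{\Sigma_0} H_l$, which exists by the classical result that every closed orientable 3-manifold admits one. By transversality, isotope $L$ so that it meets $\Sigma_0$ transversely in finitely many points, whence $L$ decomposes into a finite collection of properly embedded arcs, each lying in $H_u$ or $H_l$. For each such arc $\gamma$, choose a tubular neighborhood $N(\gamma) \cong D^2 \times [0,1]$ inside its handlebody with $D^2 \times \{0,1\}$ sitting on $\Sigma_0$, with all chosen neighborhoods pairwise disjoint. Removing $N(\gamma)$ from its handlebody and attaching it as a $1$-handle to the other realizes a stabilization of the Heegaard splitting, and a pushoff of $\gamma$ now lies on the new surface. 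Iterating over all arcs produces a (higher genus) Heegaard splitting $M = H_u' \cup_\Sigma H_l'$ with $L \subset \Sigma$. Equivalently, one may choose a smooth triangulation of $M$ whose $1$-skeleton contains $L$, take $\Sigma = \partial N(T^{(1)})$ of the standard associated Heegaard splitting, and push $L$ radially out of the $1$-skeleton to the boundary of its thickening.

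At this point $L \subset \Sigma$ carries a blackboard framing $f_B$. Since framings on a knot form a $\mathbb{Z}$-torsor, for each component $L_i$ there is an integer $n_i = f_i - f_{B,i}$ measuring the discrepancy from the original framing. If all $n_i = 0$ we are done. Otherwise, for each component $L_i$ with $n_i \neq 0$, pick two nearby points $p,q \in L_i$ and stabilize $\Sigma$ by attaching a $1$-handle whose two feet are small disks on $\Sigma$ slightly off to one side of $L$ near $p$ and $q$, keeping these feet disjoint from the rest of $L$ and from any other stabilizing handles. Then isotope the sub-arc of $L_i$ from $p$ to $q$ through the new handle (which is a $3$-ball) so that on the cylindrical portion $\partial D^2 \times [0,1]$ of the new surface it winds $n_i$ times around the handle. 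The resulting curve is isotopic to $L_i$ in $M$, remains embedded on the new Heegaard surface, and its blackboard framing differs from the old by exactly $n_i$, hence matches $f_i$. Doing this for every component produces the required Heegaard splitting.

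The main obstacle is the framing correction: one must verify, via an explicit local model of the stabilization, that an $n$-times wound rerouting of a sub-arc through a fresh $1$-handle changes the blackboard framing by exactly $n$ while preserving the link isotopy class in $M$. This is essentially the surface analog of a Reidemeister-I twist. One also has to ensure that the multiple stabilizations performed near different arcs and components can be carried out independently, which is handled by choosing all auxiliary neighborhoods pairwise disjoint and small compared to the relevant features of $L$.
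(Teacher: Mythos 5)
Your proof is correct, and its key step --- measuring the framing discrepancy on each component via the $\mathbb{Z}$-torsor structure and then correcting it by stabilizing near $L_i$ and winding the rerouted sub-arc around the meridian of the fresh handle --- is exactly the argument the paper gives; the paper likewise notes that the two choices of winding direction realize $k_i \mapsto k_i \pm |k_i|$ and that one of them kills the discrepancy. Where you genuinely differ is in the first stage, getting $L$ onto \emph{some} Heegaard surface unframed: the paper projects $L$ onto an existing Heegaard surface as an immersed curve with transverse double points and resolves each double point by a stabilizing bridge (which is also the picture used everywhere else in the paper, e.g.\ in the Hennings comparison), whereas you either tube the splitting along the arcs into which a transverse Heegaard surface cuts $L$, or take the splitting associated to a triangulation whose $1$-skeleton contains $L$. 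One caveat on your tubing variant: for $H_u \setminus N(\gamma)$ to again be a handlebody --- so that the operation really is a stabilization --- the arc $\gamma$ must be boundary-parallel in its handlebody, and this does not follow from transversality alone; you should first isotope $L$ into bridge position with respect to $\Sigma_0$ (or into a collar of $\Sigma_0$), or simply rely on your triangulation alternative, which avoids the issue entirely.
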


\begin{proof}
    Project the link to some Heegaard surface $\Sigma$ so that it is immersed and transverse to the attaching circles of the 1- and 2-handles and any self-intersections are at most transverse double points. At self-intersections, stabilize the surface by introducing a canceling 1-2 handle pair and push off one of the arcs across the ``bridge'' which is formed. Perform this in such a way that the projection is isotopic to the original link. The result is a Heegaard-Link diagram for the link $L$ treated as an unframed link.

    Now, let $(\Sigma,\alpha,\beta,L)$ be some (unframed) Heegaard-Link diagram representing the pair $(M,L)$. Since the set of framings on each component $L_i$ is a torsor over $\mbb Z$, we know that the blackboard framing of $L$ in $\Sigma$ differs from the endowed framing in the hypotheses by an integer $k_i$ for each component.

    We now perform the following procedure to ensure the two framings match, i.e. we wish to describe a way to take each $k_i$ to 0. For each $L_i$, stabilize the Heegaard diagram near $L_i$ and reroute it so that it traverses both the longitude of the stabilized torus once and the meridian  $|k_i|$ times. There are two ways to do this up to equivalence which correspond to taking $k_i$ to $k_i-|k_i|$ or taking $k_i$ to $k_i+|k_i|$. One of these will take $k_i$ to 0 depending on if $k_i$ is positive or negative. After doing this for each component, the result is an embedded link in $\Sigma$ whose blackboard framing matches the endowed framing.
\end{proof}

The final results of this section will be describing an algorithm to obtain a Heegaard diagram for the manifold obtained by surgery on $M$ along an embedded framed link $L$ and listing a complete set of moves for Heegaard-Link diagrams. The surgery result is known and demonstrated in \cite{Szabo-Heegaard-Book}, but we give another proof here. Note that although the statement is for framed knots, we may extend it to links in the obvious way.

\begin{thm}\label{thm:surgerydiagram}
    Let $K$ be a framed knot embedded in a 3-manifold $M$. Let $E=(\Sigma,\alpha,\beta,K)$ be a Heegaard-Link diagram for the pair $(M,K)$ such that the blackboard framing of $K$ in $\Sigma$ agrees with its endowed framing. Then there exists a genus $g(\Sigma)+1$ Heegaard diagram for the 3-manifold $M(K)$ obtained by surgery of $M$ along $K$ which is obtained from $E$ in the following way (see Figure \ref{fig:SurgeryHeegaardDiagram}):
    \begin{enumerate}[1)]
        \item Perform an isotopy of the curve sets $\alpha$ and $\beta$ so that there is a separation of $K$ into two curves $K_1$ and $K_2$ with $|\alpha\cap K_1| = 0$ and $|\beta\cap K_2| = 0$. 
        \item Choose a tubular neighborhood $N$ of $K_1$ in $\Sigma$ and label the components of $N\setminus K_1$ $N'$ and $N''$. Remove two open disks away from $\beta$ from both $N'$ and $N''$.
        \item Attach $I\times S^1$ to $\Sigma$ by identifying the two boundaries with the two boundaries introduced in the previous step.
        \item Add a new circle $K_{\alpha}$ to $\alpha$ and $K_{\beta}$ to $\beta$ such that $K_\beta$ replaces the knot $K$, and $K_\alpha$ is the belt of the attached $I\times S^1$.
        \item Reroute the $\beta$ curves which intersected $K$ across the attached $I\times S^1$ so that it does not traverse the meridian.
    \end{enumerate}
\end{thm}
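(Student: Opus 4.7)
I would prove the theorem by interpreting Steps 2--3 as a Heegaard stabilization of $\Sigma$, and Steps 4--5 as a controlled modification of the resulting diagram that implements surgery on $K$. First, attaching the tube $I\times S^1$ produces a new Heegaard surface $\Sigma'$ of genus $g(\Sigma)+1$. The tube can be placed just inside one of the two handlebodies; this placement is topologically unobstructed because $K_1$ is disjoint from the $\alpha$-curves (the point of Step 1), giving a clear region on the appropriate side of $\Sigma$ in which to route the tube. With this placement, the new $\alpha$-handlebody $H_\alpha'$ is obtained from $H_\alpha$ by attaching a 1-handle whose core is the solid interior of the tube, while the new $\beta$-handlebody $H_\beta'$ is obtained from $H_\beta$ by drilling out a trivial arc. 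The belt circle $K_\alpha$ bounds a meridian disk in the new handle, so $K_\alpha$ is correctly added to the $\alpha$-system.

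Next I would compare the theorem's diagram with the ``trivial stabilization''. If instead of $K_\beta$ we added the curve $L$ consisting of a longitude of the tube closed up by a trivial arc on $\Sigma\setminus N$, the resulting diagram $(\Sigma',\alpha\cup\{K_\alpha\},\beta\cup\{L\})$ would destabilize back to $(\Sigma,\alpha,\beta)$ and represent $M$. The curve $K_\beta$ in the theorem differs from $L$ by closing the tube longitude via $K_2$ rather than the trivial arc; that is, $K_\beta$ is precisely the original knot $K$ viewed as a simple closed curve on $\Sigma'$ (with $K_1$ absorbed into the tube longitude). Replacing $L$ by $K_\beta$ in the $\beta$-system then corresponds, at the level of the $3$-manifold built from the diagram, to attaching a 2-handle to $H_\beta'$ along $K$ instead of $L$, which is exactly Dehn surgery on $K$ inside $M$.

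The framing for this surgery is read from $\Sigma'$: the blackboard framing of $K_\beta$ in $\Sigma'$ equals the blackboard framing of $K$ in $\Sigma$, because the tube $I\times S^1$ is a product and introduces no twist as $K_\beta$ traverses it. By hypothesis this blackboard framing equals the endowed framing of $K$, so the 2-handle attachment realizes surgery with the correct coefficient and yields $M(K)$. The rerouting in Step 5 is needed only so that the $\beta'$-curves remain disjoint from $K_\alpha$ (otherwise $\beta$-curves that crossed $K$ would now intersect the tube's meridian), and each such rerouting is a handle-slide of a $\beta$-curve over $K_\beta$, which does not change the underlying 3-manifold. The principal technical challenge will be making the framing computation precise: one must verify carefully that the slide $L\leadsto K_\beta$ corresponds to a 2-handle attachment along $K$ with exactly the blackboard framing, which is where the product structure of the tube and the compatibility hypothesis on blackboard and endowed framings both enter.
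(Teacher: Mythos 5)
Your overall strategy---stabilize along $K_1$, recognize the diagram with the trivial dual curve $L$ as a stabilization of $(\Sigma,\alpha,\beta)$, and then swap $L$ for $K_\beta$---is close in spirit to the paper's, but the central step is asserted exactly where it needs to be proved, and the assertion as stated is not a correct general principle. Replacing one $\beta$-curve $L$ by another curve $c$ does \emph{not}, in general, effect Dehn surgery on $c$: writing $M=N\cup V_L$, where $N$ is the union of the $\alpha$-handlebody, a collar of $\Sigma'$, and the $2$-handles along the remaining $\beta$-curves, and $V_L$ is the complementary solid torus filled with meridian $L$, the swap produces the Dehn filling of $N$ with meridian $c$, i.e.\ a surgery on the \emph{core of $V_L$} with slope $c$. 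To conclude that this is surgery on $K$ you must identify that core with $K$, and this is exactly where Steps 1, 2 and 5 earn their keep: after rerouting, $K_\beta$ is disjoint from every $\alpha$-curve, from $K_\alpha$, and from every rerouted $\beta$-curve, and meets $L$ in a single point, so pushing it into the $\beta$-handlebody lands it in $V_L$ meeting the meridian disk once---hence it is isotopic to the core. This intersection count is the heart of the matter and is absent from your argument; it is also why the paper phrases the intermediate stage as a generalized Heegaard diagram for $M\setminus\nu(K)$, since deleting $L$ from the $\beta$-system is literally drilling out that core.

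Two further points would sink the write-up as it stands. First, $K_\beta$ is not ``the tube longitude closed up by $K_2$'': it is the original curve $K=K_1\cup K_2$ sitting on $\Sigma$ minus the two disks, passing \emph{between} the feet of the tube, not over it. The curve you describe differs from $K_\beta$ homologically by the belt circle $K_\alpha$ and, crucially, is disjoint from $L$, so it would fail the intersection condition above; it is also not obviously surface-framed isotopic to $K$, which undercuts your framing discussion (for the actual $K_\beta$ the framing statement is immediate, since no isotopy of $K$ is performed). Second, Step 5 is not there to keep the $\beta$-curves off $K_\alpha$---after rerouting they each traverse the tube and hence meet $K_\alpha$, which is harmless since $\alpha$- and $\beta$-curves may intersect---but to make them disjoint from $K_\beta$; and each rerouting is realized as a handle slide over the small dual curve $L$ (performed while $L$ is still in the diagram), not over $K_\beta$. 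With the intersection-pattern lemma supplied and these identifications corrected, your outline does close up into essentially the paper's proof.
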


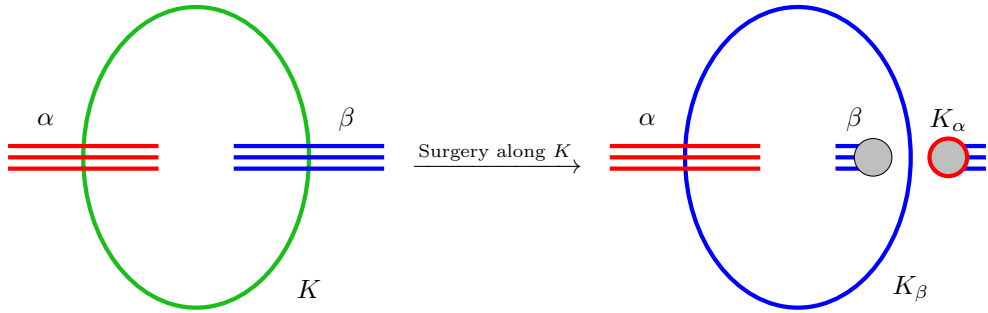
\begin{figure}
    \centering
    \begin{tikzpicture}
        \draw[darkgreen, ultra thick] (0,0) ellipse (1.5cm and 2cm);
        \draw[red, ultra thick] (-0.5,0) -- (-2.5,0);
        \draw[red, ultra thick] (-0.5,0.15) -- (-2.5,0.15);
        \draw[red, ultra thick] (-0.5,-0.15) -- (-2.5,-0.15);
        \draw[blue, ultra thick] (0.5,0) -- (2.5,0);
        \draw[blue, ultra thick] (0.5,0.15) -- (2.5,0.15);
        \draw[blue, ultra thick] (0.5,-0.15) -- (2.5,-0.15);
        \node at (1.5,-1.75) {$K$};
        \node at (-2,0.5) {$\alpha$};
        \node at (2,0.5) {$\beta$};
        \node at (4,0) {$\xrightarrow{\text{Surgery along }K}$};
    \begin{scope}[shift = {(8,0)}]
        \draw[blue, ultra thick] (0,0) ellipse (1.5cm and 2cm);
        \draw[red, ultra thick] (-0.5,0) -- (-2.5,0);
        \draw[red, ultra thick] (-0.5,0.15) -- (-2.5,0.15);
        \draw[red, ultra thick] (-0.5,-0.15) -- (-2.5,-0.15);
        \draw[blue, ultra thick] (0.5,0) -- (1,0);
        \draw[blue, ultra thick] (0.5,0.15) -- (1,0.15);
        \draw[blue, ultra thick] (0.5,-0.15) -- (1,-0.15);
        \draw[blue, ultra thick] (2,0) -- (2.5,0);
        \draw[blue, ultra thick] (2,0.15) -- (2.5,0.15);
        \draw[blue, ultra thick] (2,-0.15) -- (2.5,-0.15);
        \draw[draw=black, fill=lightgray] (1,0) circle (0.25);
        \draw[draw=black, fill=lightgray] (2,0) circle (0.25);
        \draw[red, ultra thick] (2,0) circle (0.25);
        \node at (1.5,-1.75) {$K_{\beta}$};
        \node at (-2,0.5) {$\alpha$};
        \node at (0.75,0.5) {$\beta$};
        \node at (2,0.5) {$K_\alpha$};
        \end{scope}
    \end{tikzpicture}
    \caption{A Heegaard Diagram for the surgery manifold $M(K)$}
    \label{fig:SurgeryHeegaardDiagram}
\end{figure}

\begin{proof}
    Recall surgery of $M$ along $K$:
    \begin{enumerate}[1)]
        \item Remove a tubular neighborhood $K\times D^2$ from $M$.
        \item Glue a solid torus according to the surgery coefficient of $K$ along the boundary of $M \setminus (K\times D^2)$.
    \end{enumerate}

    To construct the Heegaard diagram, first perform isotopies to the curve sets so that all $\alpha$ curves intersecting $K$ are on one side and all $\beta$ curves intersecting $K$ are on the other side. Stabilize the diagram, forming a bridge across $K$, and slide the $\beta$ curves across this bridge.

    \begin{center}
        \begin{tikzpicture}
        \draw[darkgreen, ultra thick] (0,0) ellipse (1.5cm and 2cm);
        \draw[red, ultra thick] (-0.5,0) -- (-2.5,0);
        \draw[red, ultra thick] (-0.5,0.15) -- (-2.5,0.15);
        \draw[red, ultra thick] (-0.5,-0.15) -- (-2.5,-0.15);
        \draw[blue, ultra thick] (0.5,0) -- (2.5,0);
        \draw[blue, ultra thick] (0.5,0.15) -- (2.5,0.15);
        \draw[blue, ultra thick] (0.5,-0.15) -- (2.5,-0.15);
        \node at (1.5,-1.75) {$K$};
        \node at (-2,0.5) {$\alpha$};
        \node at (2,0.5) {$\beta$};
        \node at (4,0) {$\xrightarrow{\text{Bridge across }K}$};
    \begin{scope}[shift = {(8,0)}]
        \draw[darkgreen, ultra thick] (0,0) ellipse (1.5cm and 2cm);
        \draw[red, ultra thick] (-0.5,0) -- (-2.5,0);
        \draw[red, ultra thick] (-0.5,0.15) -- (-2.5,0.15);
        \draw[red, ultra thick] (-0.5,-0.15) -- (-2.5,-0.15);
        \draw[blue, ultra thick] (0.5,0) -- (1,0);
        \draw[blue, ultra thick] (0.5,0.15) -- (1,0.15);
        \draw[blue, ultra thick] (0.5,-0.15) -- (1,-0.15);
        \draw[blue, ultra thick] (2,0) -- (2.5,0);
        \draw[blue, ultra thick] (2,0.15) -- (2.5,0.15);
        \draw[blue, ultra thick] (2,-0.15) -- (2.5,-0.15);
        \draw[draw=black, fill=lightgray] (1,0) circle (0.25);
        \draw[draw=black, fill=lightgray] (2,0) circle (0.25);
        \draw[blue, ultra thick] (1.75,0) -- (1.25,0);
        \draw[red, ultra thick] (2,0) circle (0.25);
        \node at (1.5,-1.75) {$K$};
        \node at (-0.75,0.5) {$\alpha$};
        \node at (0.75,0.5) {$\beta$};
        \end{scope}
    \end{tikzpicture}
    \end{center}

    Then the following is a the relevant portion of a generalized Heegaard diagram for the manifold with boundary $M\setminus (K\times D^2)$ given by removing a tubular neighborhood of $K$ from the upper handlebody $H_\beta$ (where $K$ used to be is indicated by the dashed line):

    \begin{center}
    \begin{tikzpicture}
        \draw[darkgreen, ultra thick, dashed] (0,0) ellipse (1.5cm and 2cm);
        \draw[red, ultra thick] (-0.5,0) -- (-2.5,0);
        \draw[red, ultra thick] (-0.5,0.15) -- (-2.5,0.15);
        \draw[red, ultra thick] (-0.5,-0.15) -- (-2.5,-0.15);
        \draw[blue, ultra thick] (0.5,0) -- (1,0);
        \draw[blue, ultra thick] (0.5,0.15) -- (1,0.15);
        \draw[blue, ultra thick] (0.5,-0.15) -- (1,-0.15);
        \draw[blue, ultra thick] (2,0) -- (2.5,0);
        \draw[blue, ultra thick] (2,0.15) -- (2.5,0.15);
        \draw[blue, ultra thick] (2,-0.15) -- (2.5,-0.15);
        \draw[draw=black, fill=lightgray] (1,0) circle (0.25);
        \draw[draw=black, fill=lightgray] (2,0) circle (0.25);
        \draw[red, ultra thick] (2,0) circle (0.25);
        \node at (-0.75,0.5) {$\alpha$};
        \node at (0.75,0.5) {$\beta$};
    \end{tikzpicture}
    \end{center}

    Attach a solid torus along this boundary to obtain the following Heegaard diagram for the surgery manifold $M(K)$:
    
    \begin{center}
    \begin{tikzpicture}
        \draw[blue, ultra thick] (0,0) ellipse (1.5cm and 2cm);
        \draw[red, ultra thick] (-0.5,0) -- (-2.5,0);
        \draw[red, ultra thick] (-0.5,0.15) -- (-2.5,0.15);
        \draw[red, ultra thick] (-0.5,-0.15) -- (-2.5,-0.15);
        \draw[blue, ultra thick] (0.5,0) -- (1,0);
        \draw[blue, ultra thick] (0.5,0.15) -- (1,0.15);
        \draw[blue, ultra thick] (0.5,-0.15) -- (1,-0.15);
        \draw[blue, ultra thick] (2,0) -- (2.5,0);
        \draw[blue, ultra thick] (2,0.15) -- (2.5,0.15);
        \draw[blue, ultra thick] (2,-0.15) -- (2.5,-0.15);
        \draw[draw=black, fill=lightgray] (1,0) circle (0.25);
        \draw[draw=black, fill=lightgray] (2,0) circle (0.25);
        \draw[red, ultra thick] (2,0) circle (0.25);\
        \node at (-0.75,0.5) {$\alpha$};
        \node at (0.75,0.5) {$\beta$};
    \end{tikzpicture}
    \end{center}
\end{proof}

\begin{xmp}
    Consider the following genus 1 Heegaard-Link diagram for the pair $(S^1\times S^2, L)$ where $L$ is the blackboard framed oriented knot parallel to the Heegaard circles.

    \begin{center}
        \begin{tikzpicture}
    %Surface
        \draw[closed, ultra thick] (0,0) to[curve through = {(1.5,-1) ..  (3,0) .. (1.5,1)}] (0,0);
        \draw[ultra thick] (1.5,0) ellipse (0.65 and 0.25); 
    %Left Colored Curves
        \draw[red, ultra thick] (1.2,-0.21) arc(90:270:0.2 and 0.38);
        \node at (1,-1.25) {$\alpha$};
        \draw[red, ultra thick, dashed] (1.2,-0.21) arc(90:-90:0.2 and 0.38);
        \draw[blue, ultra thick] (1.5,-0.23) arc(90:270:0.2 and 0.38);
        \node at (1.5,-1.25) {$\beta$};
        \draw[blue, ultra thick, dashed] (1.5,-0.23) arc(90:-90:0.2 and 0.38);
        \draw[green, ultra thick, decoration={markings, mark=at position 0.5 with {\arrow{>}}}, postaction={decorate}] (1.8,-0.21) arc(90:270:0.2 and 0.38);
        \node at (2,-1.25) {$L$};
        \draw[green, ultra thick, dashed] (1.8,-0.21) arc(90:-90:0.2 and 0.38);
    \end{tikzpicture}
    \end{center}

    Then the Heegaard diagram for the surgery manifold is given by

    \begin{center}
        \begin{tikzpicture}
    %Surface
        \draw[closed, ultra thick] (0,0) to[curve through = {(2,-1) .. (3.5,-0.5) .. (5,-1) .. (7,0) .. (5,1) .. (3.5,0.5) .. (2,1)}] (0,0);
        \draw[ultra thick] (1.5,0) ellipse (0.65 and 0.35);
        \draw[ultra thick] (5.5,0) ellipse (0.65 and 0.35);
    %Colored Curves
        \draw[red, ultra thick] (1.2,-0.29) arc(90:270:0.2 and 0.38);
        \draw[red, ultra thick, dashed] (1.2,-0.29) arc(90:-90:0.2 and 0.38);
        \node at (0.75,-0.6) {$\alpha'_1$};
        \draw[blue, ultra thick] (1.7,-0.3) arc(90:270:0.2 and 0.38);
        \draw[blue, ultra thick, dashed] (1.7,-0.3) arc(90:-90:0.2 and 0.38);
        \node at (2.15,-0.6) {$\beta'_1$};
        \draw[red, ultra thick] (5.5,-0.33) arc(90:270:0.2 and 0.38);
        \draw[red, ultra thick, dashed] (5.5,-0.33) arc(90:-90:0.2 and 0.38);
        \node at (5.5,-1.4) {$\alpha'_2$};
        \draw[blue, ultra thick] (2.15,0) arc(180:0:1.35 and 0.2);
        \draw[blue, ultra thick, dashed] (2.15,0) arc(-180:0:1.35 and 0.2);
        \node at (2.5,0.4) {$\beta'_2$};
    \end{tikzpicture}
    \end{center}

    Note that by sliding $\beta_2'$ over $\beta_1'$ we obtain the standard Heegaard diagram for $(S^1\times S^2) \# (S^1\times S^2)$. Note that since $L$ is a 0-framed unknot in $S^1 \times S^2$, indeed performing surgery along $L$ yields the same manifold.
\end{xmp}

\begin{xmp}
    Consider the following genus 1 Heegaard-Link diagram for the pair $(S^3,L)$ where $L$ is a $(-p)$-framed unknot.
    
    \begin{center}
        \begin{tikzpicture}
    %Surface
        \draw[closed, ultra thick] (0,0) to[curve through = {(3,-2) ..  (6,0) .. (3,2)}] (0,0);
        \draw[ultra thick] (3,0) ellipse (1.3 and 0.5); 
    %Colored Curves
        \draw[ultra thick, red] (3,0) ellipse (2 and 1);
        \draw[ultra thick, blue] (3,-0.5) arc(90:-90:0.5 and 0.75);
        \draw[ultra thick, blue, dashed] (3,-0.5) arc(90:270:0.5 and 0.75);
        \draw[ultra thick, green, decoration={markings, mark=at position 0.5 with {\arrow{>}}}, postaction={decorate}] (3,0) ellipse (2.5 and 1.25);
        \draw[ultra thick, green, fill = white] (2.25,0.25) rectangle (3.75,2.25);
        \node at (3,1.6) {$p$};
        \node at (3,1.25) {left-hand};
        \node at (3,0.9) {twists};
    \end{tikzpicture}
    \end{center}
    
    The resulting surgery manifold is well-known to be the lens space $L(p,1)$, which has a Heegaard diagram $(\mbb{T}^2,\alpha,\beta)$ where $\alpha$ is a standard meridian on $\mbb{T}^2$ and $\beta$ is a $(p,1)$ torus knot, which traverses the longitude $p$ times and the meridian $1$ time.

    The resulting Heegaard diagram for the surgery manifold is given by

    \begin{center}
        \begin{tikzpicture}
    %Surface
        \draw[closed, ultra thick] (0,0) to[curve through = {(3,-2) ..  (6,-1) .. (9,-2) .. (12,0) .. (9,2) .. (6,1) .. (3,2)}] (0,0);
        \draw[ultra thick] (3,0) ellipse (1.3 and 0.5); 
        \draw[ultra thick] (9,0) ellipse (1.3 and 0.5); 
    %Colored Curves
        \draw[ultra thick, red] (3,0) ellipse (2 and 1);
        \draw[ultra thick, blue, decoration={markings, mark=at position 0.5 with {\arrow{>}}}, postaction={decorate}] (3,1.5) to[curve through = {(1,1) .. (0.5,0) .. (1,-1) .. (3.5,-1.5) .. (6,0)}] (7.7,0);
        \draw[ultra thick, blue, dashed] (7.7,0) arc(0:-180:1.7 and 0.5);
        \draw[ultra thick, blue] (4.3,0) to[curve through = {(4,1.25)}] (3,1.5);
        \draw[ultra thick, blue, fill = white] (2.25,0.25) rectangle (3.75,2.25);
        \node at (3,1.6) {$p$};
        \node at (3,1.25) {left-hand};
        \node at (3,0.9) {twists};
        \draw[ultra thick, red] (9,-0.5) arc(90:-90:0.5 and 0.75);
        \draw[ultra thick, red, dashed] (9,-0.5) arc(90:270:0.5 and 0.75);
        \draw[ultra thick, blue] (3,-0.5) to[curve through = {(3.25,-0.6) .. (6,0.5) .. (9,1) .. (11.5,0) .. (7,-0.75) .. (4,-1.5)}] (3,-2);
        \draw[ultra thick, blue, dashed] (3,-0.5) arc(90:270:0.5 and 0.75);

        \node at (4.75,0.9) {$\alpha'_1$};
        \node at (9.75,-1.7) {$\alpha'_2$};
        \node at (0.8,1.2) {$\beta'_1$};
        \node at (11,0) {$\beta'_2$};
    \end{tikzpicture}
    \end{center}

    Sliding $\alpha'_1$ along $\alpha'_2$ via the path defined by $\beta'_2$ yields a cancelling pair of Heegaard circles. Specifically $\alpha'_2$ cancels with $\beta'_2$. Then after destabilizing we obtain the diagram 

    \begin{center}
        \begin{tikzpicture}
    %Surface
        \draw[closed, ultra thick] (0,0) to[curve through = {(3,-2) ..  (6,0) .. (3,2)}] (0,0);
        \draw[ultra thick] (3,0) ellipse (1.3 and 0.5); 
    %Colored Curves
        \draw[ultra thick, red] (3,0) ellipse (2 and 1);
        \draw[ultra thick, blue, decoration={markings, mark=at position 0.5 with {\arrow{>}}}, postaction={decorate}] (3,0) ellipse (2.5 and 1.25);
        \draw[ultra thick, blue, fill = white] (2.25,0.25) rectangle (3.75,2.25);
        \node at (3,1.6) {$p$};
        \node at (3,1.25) {left-hand};
        \node at (3,0.9) {twists};
    \end{tikzpicture}
    \end{center}

    which is the ``upside down'' Heegaard diagram for the Lens space $L(p,1)$.
\end{xmp}

\begin{thm}[Heegaard-Link Moves] \label{thm:extendedHeegaardMoves}
    Let $M$ be a closed connected oriented 3-manifold and let $L$ be an $m$-component oriented link in $M$. Then any two Heegaard-Link diagrams for the pair $(M,L)$ are diffeomorphic after a finite sequence of the following moves:
    \begin{enumerate}[Move 1:]
        \item (Diagram Isotopy) An isotopy of the curve sets $\alpha$ and $\beta$, or an isotopy of the link $L$.
        \item (Handle Slide) A handle slide of the Heegaard curve sets away from the link or a slide of the link across a Heegaard circle which does not intersect any link components.
        \item (Stabilization/Destabilization) A stabilization or destabilization of the Heegaard diagram at a disk away from the link $L$.
    \end{enumerate}
\end{thm}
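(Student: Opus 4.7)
The plan is to establish a relative version of the Reidemeister--Singer theorem in which the embedded framed link $L$ is preserved throughout. Classically, any two Heegaard diagrams of a closed oriented 3-manifold are related by a finite sequence of diagram isotopies, handle slides, and (de)stabilizations; the goal is to show that, in the presence of $L$, these moves can always be realized while avoiding a neighborhood of $L$, together with the additional on-surface flexibility of isotoping and sliding $L$.

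First, I would normalize the link. Given two Heegaard-Link diagrams $(\Sigma_i,\alpha_i,\beta_i,L_i)$ for $(M,L)$, the definition of equivalence provides an ambient framed isotopy of $M$ carrying $L_1 \subset \Sigma_1$ to $L_2 \subset \Sigma_2$. Using Move~1 inside each diagram, one may assume that both diagrams have the \emph{same} embedded framed link $L \subset M$ sitting inside their respective Heegaard surfaces $\Sigma_i$ with blackboard framing matching the prescribed framing, which is possible by Lemma \ref{embedknotinsurface_lemma}.

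Next, fix a tubular neighborhood $N(L) \cong L \times D^2$. Each Heegaard-Link diagram then restricts to a generalized Heegaard splitting of the link exterior $E = M \setminus \operatorname{int} N(L)$ into two compression bodies, meeting $\partial N(L)$ in parallel longitudes determined by the framing. A relative Reidemeister--Singer theorem for 3-manifolds with toroidal boundary, proved via generic one-parameter families of self-indexing Morse functions on $E$ that are constant on $\partial N(L)$, asserts that any two such splittings are related by a finite sequence of relative diagram isotopies, relative handle slides, and relative (de)stabilizations, each supported in the interior of $E$ and hence disjoint from $L$. Pushed back into the surfaces $\Sigma_i \subset M$, these correspond exactly to the ``away from $L$'' portions of Moves~1, 2, and 3. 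The residual ambiguity is how $L$ sits inside a fixed Heegaard surface: two realizations of the same framed isotopy class of $L$ in $\Sigma$ differ by on-surface isotopies of $L$ (Move 1) and by sliding $L$ across compressing disks in either handlebody, i.e., across $\alpha$- or $\beta$-curves not meeting any link component (the link part of Move 2), since such a compressing disk lies in the complement of $L$.

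The main obstacle is the relative Cerf-theoretic step: one must verify that the one-parameter family of Morse functions on $E$ can be chosen so that all births, deaths, and handle-crossings occur in the interior of $E$, away from $\partial N(L)$. This is a standard transversality/codimension argument, as the configurations in which a Cerf singularity touches $L$ form a subset of codimension at least $2$ in the relevant function space, and therefore generic paths avoid it. Once this is in place, combining the relative theorem with the on-surface moves for $L$ yields the stated classification.
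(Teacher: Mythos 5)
Your strategy --- delete a tubular neighborhood of $L$ and run a relative Reidemeister--Singer argument on the link exterior $E$ --- is genuinely different from the paper's. The paper keeps the absolute Reidemeister--Singer theorem as its only black box and then does the real work on the link side: it builds $M$ from the Heegaard surface in stages ($\Sigma\times I$, then thickened disks along the $\alpha$- and $\beta$-curves, then $3$-balls) and shows that a framed ambient isotopy of $L$, when concentrated in each stage, is realized by surface isotopies, slides across Heegaard circles, and handle slides, respectively. Your route instead pushes all of the difficulty into a single relative statement about splittings of $E$; if that statement were available in the precise form you need, the rest of your argument (the normalization of $L$ by absorbing an ambient isotopy into a diffeomorphism of the second diagram, the transport of interior moves back to $M$) would be sound.

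However, two steps as written are gaps rather than routine verifications. First, the relative Reidemeister--Singer theorem you invoke is internally inconsistent as stated: a self-indexing Morse function on $E$ that is \emph{constant} on $\partial N(L)$ produces a closed splitting surface disjoint from $\partial E$, whereas you need the splitting surface to meet each boundary torus in the two framing longitudes. The correct framework is that of sutured Heegaard splittings, where each boundary torus is partitioned into two annuli separated by longitudinal sutures and the Morse function interpolates between them; the uniqueness-up-to-moves statement in that setting is a substantive theorem, not a transversality remark --- one must control the entire handle structure near $\partial E$, not merely where the birth/death and crossing singularities occur --- and in your approach it carries the full weight of Theorem~\ref{thm:extendedHeegaardMoves}, so it cannot be left as a sketch. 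Second, your closing claim --- that two embeddings of $L$ in a Heegaard surface representing the same framed isotopy class in $M$ differ by surface isotopies and slides over Heegaard circles --- is asserted without argument, yet it is precisely the nontrivial point: an ambient isotopy of $L$ can pass through either handlebody, and one must show it decomposes into isotopies supported in $\Sigma\times I$, in the $2$-handles, and in the $3$-balls, checking along the way that the blackboard framing is preserved (the paper does this by deforming the isotopy into an isotopy of bands in $D^2\times I$ rel the boundary). Either supply the sutured Reidemeister--Singer theorem in full, in which case this last step becomes unnecessary, or prove the decomposition claim directly as the paper does.
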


\begin{proof}
    A version of this is proven in \cite[Section 3]{ozsvath2008holomorphic} using $2l$-pointed Heegaard diagrams. There is a relationship between these and Heegaard-Link diagrams which we do not explore. For our purposes, we prove this result differently.

    We first note that any two Heegaard diagrams for $M$ are diffeomorphic after a finite sequence of diagram isotopies, handle slides, and (de)stabilizations. Now, consider a Heegaard-Link diagram $(\Sigma,\alpha,\beta,L)$ as we examine isotopies of $L$ concentrated in the following spaces built by attaching handles to $\Sigma$. 
    
    First, we thicken $\Sigma$ to $\Sigma\times I$ and notice that any framed isotopy of $L$ in this space (followed by embedding it back to $\Sigma = \Sigma\times \{0\}$) can be realized as an isotopy of $L$ in the Heegaard surface $\Sigma$. This is because any isotopy of $\Sigma\times I$ is equivalent to an isotopy of isotopies of $\Sigma$.

    Next, we attach thickened disks to each of the $\alpha$ and $\beta$ curves. Note that any framed isotopy of $L$ in this space (followed again by embedding back to $\Sigma$) is realized as an isotopy of $L$ in $\Sigma$ with the addition of a slide across the Heegaard circles. It is important to note here that the blackboard framing agrees before and after the framed isotopy since any isotopy concentrated in these thickened disks can be modified to be an isotopy of bands in $D^2\times I$ rel $S^1\times (\{0\}\cup \{1\})$, which obviously preserves the framing.

    Finally, we attach balls to the boundary sphere components. Any isotopy of $L$ concentrated in these balls can be modified to be an isotopy of a finite number of ribbons rel the boundary. This can be realized as an isotopy on the bounding sphere, which is a composition of isotopies in $\Sigma$ and handle slides.
\end{proof}

\section{Invariants of Heegaard-Link Diagrams}
\label{DiagramInvariant_Sec}
We dedicate this section to describing the invariant of Heegaard-Link diagrams and proving the main results of the paper.

Let $H$ be a finite dimensional involutory Hopf algebra and let $\mu\in H^*$ be a nonzero two-sided integral and $e\in H$ a nonzero two-sided cointegral such that $\mu(e) = 1$. Let $\rho:D(H)\to \Endo(V)$ be a representation of the Drinfeld double of $H$. Consider also a linear map $T:\Endo(V)\to \Bbbk$ which satisfies the property that $T(XY) = T(YX)$ (i.e. $T$ is \emph{trace-like}).

Let $E = (\Sigma,\alpha,\beta,L)$ be a Heegaard-Link diagram, and $L = L_1\cup \cdots \cup L_m$, blackboard framed in $\Sigma$. Orient all Heegaard circles arbitrarily and consider an arbitrary base point on each circle and link component such that the base point does not lie on a crossing.

For each $\alpha_i$, assign the tensor 

    \begin{center}
    \begin{tikzpicture}
        \node at (0,0) (e) {$e$};
        \node at (1,0) (D) {$\Delta$};
        \node[right of = D] at (60:1) (c1) {$c_1$};
        \node[right of = D] at (30:1) (c2) {$c_2$};
        \node[right of = D] at (-60:1) (c3) {$c_{n_i}$};
        \draw[->] (D) -- (c1);
        \draw[->] (D) -- (c2);
        \draw[->] (D) -- (c3);
        \foreach \angle in {5,-15,-35} {
            \node[right of = D] at (\angle:0.5) {$\cdot$};
        }
        \draw[->] (e) -- (D);
    \end{tikzpicture}
    \end{center}
where $c_j$ is the $j$th crossing encountered by traversing $\alpha_i$ along its orientation beginning at the base point. Similarly, for each $\beta_i$, assign the tensor

\begin{center}
    \begin{tikzpicture}
        \node at (0,0) (mu) {$\mu$};
        \node at (1,0) (M) {$M$};
        \node[right of = M] at (-60:1) (c1) {$c_1$};
        \node[right of = M] at (-30:1) (c2) {$c_2$};
        \node[right of = M] at (60:1) (c3) {$c_{n_i}$};
        \draw[<-] (M) -- (c1);
        \draw[<-] (M) -- (c2);
        \draw[<-] (M) -- (c3);
        \foreach \angle in {15,-5,35} {
            \node[right of = M] at (\angle:0.5) {$\cdot$};
        }
        \draw[<-] (mu) -- (M);
    \end{tikzpicture}
\end{center}
where $c_j$ is the $j$th crossing encountered by traversing $\beta_i$ along its orientation beginning at the base point.

For each link component, $L_i$, assign the tensor

\begin{center}
    \begin{tikzpicture}
        \node[darkgreen] at (0,0) (T) {$T$};
        \node[darkgreen] at (1,0) (M) {$M$};
        \node[darkgreen, right of = M] at (45:1.5) (rho1) {\rotatebox{45}{\fbox{$\rho$}}};
        \node[darkgreen, right of = M] at (0:1.5) (rho2) {\rotatebox{0}{\fbox{$\rho$}}};
        \node[darkgreen, right of = M] at (-45:1.5) (rho3) {\rotatebox{-45}{\fbox{$\rho$}}};
        \node[right of =M] at (45:2.75) (c3) {$c_3$};
        \node[right of =M] at (0:2.75) (c2) {$c_2$};
        \node[right of =M] at (-45:2.75) (c1) {$c_1$};
        \draw[darkgreen,->] (M) -- (T);
        \draw[darkgreen,->] (rho1) -- (M);
        \draw[darkgreen,->] (rho2) -- (M);
        \draw[darkgreen,->] (rho3) -- (M);
        \node[right of =M] at (75:1.5) {\rotatebox{-30}{$\cdots$}};
        \draw[<-] ([shift={(0.35,0.2)}]rho1.center) -- ++(45:0.5);
        \draw[->] ([shift={(0.2,0.35)}]rho1.center) -- ++(45:0.5);
        \draw[<-] ([shift={(0.35,-0.1)}]rho2.center) -- ++(0:0.5);
        \draw[->] ([shift={(0.35,0.1)}]rho2.center) -- ++(0:0.5);
        \draw[->] ([shift={(0.35,-0.2)}]rho3.center) -- ++(-45:0.5);
        \draw[<-] ([shift={(0.2,-0.35)}]rho3.center) -- ++(-45:0.5);
    \end{tikzpicture}
\end{center}

where $c_j$ is the $j$th crossing encountered by traversing $L_i$ along its orientation beginning at the base point.

Before we proceed, we fix a prescription for a positive crossing with respect to each pair of curves. This is (mostly) arbitrary, but if we make a different choice, we may need to consider representations of $D(H^{\mathrm{op}})$ instead. We will use the prescription that for any pair of $\alpha_i$, $\beta_j$, $L_k$ with intersection $p$, $(d_p(\alpha_i), d_p(\beta_j))$, $(d_p(L_k), d_p(\beta_j))$, and $(d_p(\alpha_i),d_p(L_k))$ each form an ordered basis for $T_p\Sigma$, where $d_p\gamma$ is the tangent vector at a point $p$ in the direction of $\gamma$. If this basis matches the orientation of $\Sigma$ we call the crossing positive, otherwise, we call the crossing negative. Pictorially, the following crossings are positive when the plane is given the standard orientation:

\begin{center}
    \begin{tikzpicture}
    %alpha < beta +
        \draw[red,ultra thick, ->] (-1,1) -- (1,-1);
        \draw[blue,ultra thick, ->] (-1,-1) -- (1,1);
        \node at (-1.25,-1.25) {$\beta$};
        \node at (-1.25,1.25) {$\alpha$};
    %beta > kappa + 
        \draw[darkgreen,ultra thick, ->] (2,1) -- (4,-1);
        \draw[blue,ultra thick, ->] (2,-1) -- (4,1);
        \node at (1.75,-1.25) {$\beta$};
        \node at (1.75,1.25) {$L$};
    %kappa > alpha +
        \draw[red,ultra thick, ->] (5,1) -- (7,-1);
        \draw[darkgreen,ultra thick, ->] (5,-1) -- (7,1);
        \node at (4.75,-1.25) {$L$};
        \node at (4.75,1.25) {$\alpha$};
    \end{tikzpicture}
\end{center}

For each crossing $c_{ij}$ of $\alpha_i$ with $\beta_j$, we contract by the following tensor:

\begin{center}
    \begin{tikzpicture}
        \node at (0,0) (S) {$S^{\eta_{ij}}$};
        \draw[<-] (S) -- ++(-0.75,0);
        \draw[->] (S) -- ++(0.75,0);
    \end{tikzpicture}
\end{center}
where $\eta_{ij} = 0$ if the crossing $c_{ij}$ is positive, and $\eta_{ij} = 1$ if the crossing is negative.

For each crossing $c_{ij}$ of $\alpha_i$ with $L_j$, we contract by the following tensor:

\begin{center}
    \begin{tikzpicture}
        \node at (0,0) (S) {$S^{\eta_{ij}}$};
        \node at (0,0.5) (eps) {$\epsilon$};
        \draw[<-] (S) -- ++(-0.75,0);
        \draw[->] (S) -- ++(0.75,0);
        \draw[<-] (eps) -- ++(0.75,0);
    \end{tikzpicture}
\end{center}
where $\eta_{ij} = 0$ if the crossing $c_{ij}$ is positive, and $\eta_{ij} = 1$ if the crossing is negative. Similarly, for each crossing of $\beta_i$ with $L_j$, we contract by the following tensor:

\begin{center}
    \begin{tikzpicture}
        \node at (0,0) (S) {$S^{\eta_{ij}}$};
        \node at (0,-0.5) (i) {$i$};
        \draw[->] (S) -- ++(-0.75,0);
        \draw[<-] (S) -- ++(0.75,0);
        \draw[->] (i) -- ++(0.75,0);
    \end{tikzpicture}
\end{center}
where again $\eta_{ij} = 0$ if the crossing $c_{ij}$ is positive, and $\eta_{ij} = 1$ if the crossing is negative.

Define the \emph{extended Kuperberg bracket} $\langle E\rangle_{H,(\rho,T)}$ as the contraction of this tensor network.

For brevity, and when the context is clear, we will often use the following notation relating the representation $\rho$ of the Drinfeld double $D(H)$ with its components $\rho_H$ and $\rho_{H^*}$:

\begin{center}
\begin{tikzpicture}
    \node[darkgreen] (rho) at (0,0) {\fbox{$\rho$}};
    \draw[<-] ([shift={(-0.35,0)}]rho.center) -- ++(0:-0.5);
    \draw[->,darkgreen] (rho) -- ++(0.75,0);

    \node at (1.25,0) {$=$};
    
    \begin{scope}[shift={(3,0)}]
    \node[darkgreen] (rho2) at (0,0) {\fbox{$\rho$}};
    \node (eps) at (-1,0.25) {$\epsilon$};
    \draw[<-] ([shift={(-0.35,-0.1)}]rho2.center) -- ++(10:-0.5);
    \draw[->] ([shift={(-0.35,0.1)}]rho2.center) -- (eps);
    \draw[->,darkgreen] (rho2) -- ++(0.75,0);

    \node at (1.25,0) {$=$};
    \end{scope}

    \begin{scope}[shift = {(5.75,0)}]
    \node[darkgreen] (rho3) at (0,0) {\fbox{$\rho_H$}};
    \draw[<-] ([shift={(-0.5,0)}]rho3.center) -- ++(0:-0.5);
    \draw[->,darkgreen] (rho3) -- ++(1,0);
    \end{scope}
    
\end{tikzpicture}\vspace{0.5cm}

\begin{tikzpicture}
    \node[darkgreen] (rho) at (0,0) {\fbox{$\rho$}};
    \draw[->] ([shift={(-0.35,0)}]rho.center) -- ++(0:-0.5);
    \draw[->,darkgreen] (rho) -- ++(0.75,0);

    \node at (1.25,0) {$=$};
    
    \begin{scope}[shift={(3,0)}]
    \node[darkgreen] (rho2) at (0,0) {\fbox{$\rho$}};
    \node (i) at (-1,-0.25) {$i$};
    \draw[<-] ([shift={(-0.35,-0.1)}]rho2.center) -- (i);
    \draw[->] ([shift={(-0.35,0.1)}]rho2.center) -- ++(-10:-0.5);
    \draw[->,darkgreen] (rho2) -- ++(0.75,0);

    \node at (1.25,0) {$=$};
    \end{scope}

    \begin{scope}[shift = {(5.75,0)}]
    \node[darkgreen] (rho3) at (0,0) {\fbox{$\rho_{H^*}$}};
    \draw[->] ([shift={(-0.5,0)}]rho3.center) -- ++(0:-0.5);
    \draw[->,darkgreen] (rho3) -- ++(1,0);
    \end{scope}
    
\end{tikzpicture}
\end{center}

For a pair $(M,L)$ of a closed connected oriented 3-manifold $M$ and a framed oriented link $L$ embedded in $M$, define the \emph{extended Kuperberg invariant} $Z_{\mathrm{exK}}((M,L);H,(\rho,T))$ to be
\[ Z_{\mathrm{exK}}((M,L);H,(\rho,T)) = \langle E \rangle_{H,(\rho,T)}\] for some Heegaard-Link diagram $E$ representing $(M,L)$.

\begin{mainthm}
    The bracket $\langle \bullet\rangle_{H,\mu,(\rho,T)}$ is invariant under Heegaard-Link diagram moves, and hence $Z_{\mathrm{exK}}(\bullet;H,(\rho,T))$ is an invariant of pairs $(M,L)$ consisting of a closed connected oriented 3-manifold $M$ and an oriented framed link $L$ embedded in $M$.
\end{mainthm}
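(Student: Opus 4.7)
The plan is to follow Kuperberg's original invariance argument, handling each auxiliary choice and each move from Theorem \ref{thm:extendedHeegaardMoves} in turn, and inserting new checks at every place where link tensors appear. I would first verify that $\langle E\rangle_{H,(\rho,T)}$ is well-defined. Base point independence on a $\beta$ circle follows from the cyclicity of iterated multiplication combined with the two-sided property of $\mu$; dually, base point independence on each $\alpha$ circle uses coassociativity and the two-sided property of $e$. On a link component, base point independence is exactly the hypothesis $T(XY) = T(YX)$. Orientation reversal of a Heegaard circle is absorbed by $\mu \circ S = \mu$ and $e \circ S = e$ together with the involutory axiom.

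Next I would verify invariance under diagram isotopy (Move 1). The non-trivial cases are isotopies that move an intersection past a vertex or past another intersection. Heegaard-Heegaard crossings passing a vertex are Kuperberg's original (co)associativity arguments. A link-Heegaard intersection sliding along a link strand reorders the $\rho$ insertions on that component; associativity of multiplication in $\Endo(V)$ together with the representation-compatibility identities of Lemma \ref{lem:doubleequalities} show that the two orderings give equal tensors. Reidemeister-II-like cancellations are absorbed by $S^2 = \mathrm{id}$, so that the sign $\eta_{ij}$ only matters mod $2$ and adjacent cancelling antipodes disappear.

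The core step is invariance under the handle slides of Move 2. A slide of Heegaard curves away from the link reduces to Kuperberg's original computation using the bialgebra axiom together with the (co)integral axiom: the duplicated tensor introduced by the slide collapses back under the integral. For the slide of a link component $L_i$ across a Heegaard circle $\gamma$ disjoint from all link components, the slid link picks up new link-Heegaard intersections wherever $\gamma$ crosses a Heegaard curve of the opposite color. Using the ladders together with the relations of Lemma \ref{lem:doubleequalities} (and the identification of the Drinfeld double two-sided integral $e\otimes \mu$ from Proposition \ref{prop:doubleintegral}), the newly introduced $\rho$ factors reassemble into a copy of the tensor originally assigned to $\gamma$, which is then collapsed by the integral/cointegral identity $\mu(e) = 1$ to recover the original bracket.

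Finally, a stabilization or destabilization (Move 3) inserts or removes a canceling $\alpha$--$\beta$ pair with a single crossing in a disk disjoint from $L$, contributing $\mu(S^0 \circ e) = \mu(e) = 1$, so the bracket is unchanged. The main obstacle, in my view, will be the link-over-circle slide: one must carefully track the non-commutative order of the new $\rho$ insertions on $L_i$, distinguish the $\rho_H$ and $\rho_{H^*}$ components arising from the different crossing signs at $\gamma$'s intersections, and invoke the correct relation from Lemma \ref{lem:doubleequalities} at each crossing before collapsing via the integral axiom. Everything else either reduces to a standard Kuperberg manipulation or is a short application of trace cyclicity of $T$.
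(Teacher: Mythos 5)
Your decomposition matches the paper's proof almost exactly: reduce to base-point change, orientation reversal, isotopy (via two-point and three-point moves), handle slides, and stabilization; use cyclicity of $e$, $\mu$, $T$ for base points, antipode-invariance of the integrals for orientation reversal, Lemma \ref{lem:doubleequalities} for the mixed three-point moves, the representation property to reduce link slides to the Heegaard-circle case, and $\mu(e)=1$ for stabilization. Two of your justifications are misattributed, however, and would fail if carried out literally. First, the two-point (bigon) move is \emph{not} absorbed by $S^2=\mathrm{id}$: the two cancelling crossings feed into \emph{different} legs of the $\Delta$ on the $\alpha$ circle and of the $M$ on the other curve, so the two antipodes never compose; the identity that collapses the bigon is the antipode axiom $M\circ(S\otimes\mathrm{id})\circ\Delta=i\circ\epsilon$ (and, for a bigon between a Heegaard circle and a link component, the same axiom after first using that $\rho_H$ is an algebra map to merge the two adjacent $\rho$ insertions). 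The involutory hypothesis enters elsewhere (orientation reversal and the consistency of $\eta_{ij}\in\{0,1\}$), not here. Second, the handle-slide collapse is governed by the defining property of the (co)integral, $(\mathrm{id}\otimes\mu)\circ\Delta=i\cdot\mu$ (dually for $e$), not by the normalization $\mu(e)=1$, which is only needed for stabilization; for the link-over-circle slide the new $\rho_H$ insertions along $L_i$ merge into $\rho_H$ of a product occupying an extra $\Delta$-leg of each $\alpha$ circle, and it is again the integral axiom that kills this extra leg. With those two corrections your argument is the paper's argument; the only ingredient you leave implicit that the paper makes explicit is the reduction of curve-set isotopy to two-point and three-point moves via the bigon criterion and minimal-position results.
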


\begin{proof}
    By Theorem \ref{thm:extendedHeegaardMoves}, we need only to prove invariance under the following moves:
    \begin{enumerate}[1)]
        \item Base point change
        \item Heegaard circle orientation reversal
        \item Isotopy of curve sets
        \item Handle slides
        \item Stabilization/Destabilization
    \end{enumerate}

    Note that because $e$, $\mu$, and $T$ are cyclic, we have invariance under base point change. Note also that because $S\circ e = e$ and $\mu\circ S = \mu$, and $S$ is an anti-algebra and anti-coalgebra morphism from $H$ to itself, we get invariance under circle orientation reversal.

    For isotopies of curve sets, we need the following well-known result:

    \begin{lem}
        Let $\Gamma$ be a closed 1-manifold in a surface $\Sigma$. Let $i_0$ and $i_1$ be homotopic immersions of $\Gamma$ into $\Sigma$ such that each component of $\Gamma$ is embedded in $\Sigma$ by $i_0$ and $i_1$, and the components of $i_0(\Gamma)$ and $i_1(\Gamma)$ intersect transversely, then $i_0$ and $i_1$ are diffeomorphic after a sequence of two-point and three-point moves (see Figures \ref{fig:twopoint} and \ref{fig:threepoint}).
    \end{lem}

    \begin{proof}
        By the bigon criterion (see \cite[Lemma~3.1]{Hass85}, after a sequence of two-point moves, $i_0$ and $i_1$ may be assumed to be in minimal position. Then by \cite[Lemma~3.4]{PATERSON2002205} any two minimal immersions are ambiently isotopic after a sequence of three-point moves.
    \end{proof}

    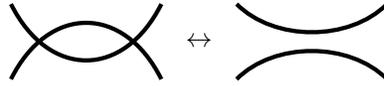
\begin{figure}
        \centering
        \begin{tikzpicture}
            \draw[ultra thick] (-1,0.5) .. controls +(0.5,-1) and +(-0.5,-1) .. (1,0.5);
            \draw[ultra thick] (-1,-0.5) .. controls +(0.5,1) and +(-0.5,1) .. (1,-0.5);
            \node[ultra thick] at (1.5,0) {$\leftrightarrow$};
            \draw[ultra thick] (2,0.5) .. controls +(0.5,-0.5) and +(-0.5,-0.5) .. (4,0.5);
            \draw[ultra thick] (2,-0.5) .. controls +(0.5,0.5) and +(-0.5,0.5) .. (4,-0.5);
        \end{tikzpicture}
        \caption{Two-Point Move}
        \label{fig:twopoint}
    \end{figure}

    \begin{figure}
        \centering
        \begin{tikzpicture}
            \draw[ultra thick] (-1.25,-0.5) -- (1.25,-0.5);
            \draw[ultra thick] (-0.5,1) -- (1,-1);
            \draw[ultra thick] (0.5,1) -- (-1,-1);
            \node at (2,0) {$\leftrightarrow$};
            \begin{scope}[shift = {(4,0)}]
            \draw[ultra thick] (-1.25,-0.5) .. controls +(1,1) and +(-1,1) .. (1.25,-0.5);
            \draw[ultra thick] (-0.5,1) .. controls +(0,-1) and +(-1,0.25) .. (1,-1);
            \draw[ultra thick] (0.5,1) .. controls +(0,-1) and +(1,0.25) .. (-1,-1);
            \end{scope}
        \end{tikzpicture}
        \caption{Three-Point Move}
        \label{fig:threepoint}
    \end{figure}
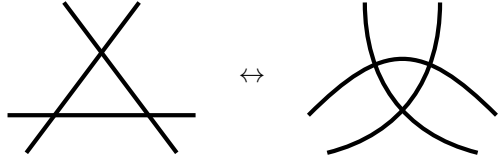

    For a two-point move with an $\alpha$ and $\beta$ curve, the following representative tensor equality demonstrates the invariance:

    \begin{center}
        \begin{tikzpicture}
            \node at (0,0) (D) {$\Delta$};
            \node at (1,0.5) (S) {$S$};
            \node at (2,0) (M) {$M$};
            \draw[->] (D) -- (S);
            \draw[->] (S) -- (M);
            \draw[->] (D) .. controls +(0.5,-0.5) and +(-0.5,-0.5) .. (M);
            \draw[<-] (D) -- ++(-0.75,0);
            \draw[->] (M) -- ++(0.75,0);
            \node at (3.5,0) {$=$};
            \begin{scope}[shift = {(5,0)}]
                \node at (0,0) (e) {$\epsilon$};
                \node at (1,0) (i) {$i$};
                \draw[<-] (e) -- ++(-0.75,0);
                \draw[->] (i) -- ++(0.75,0);
            \end{scope}
        \end{tikzpicture}
    \end{center}

    For a two-point move with an $\alpha$ curve an a link component $L_i$, the following representative tensor equality demonstrates invariance:

    \begin{center}
        \begin{tikzpicture}
            \node at (0,0) (D) {$\Delta$};
            \node at (1,0.5) (S) {$S$};
            \node[darkgreen] at (2.25,0.5) (rhoH1) {\fbox{$\rho_H$}};
            \node[darkgreen] at (2.25,-0.5) (rhoH2) {\fbox{$\rho_H$}};
            \node[darkgreen] at (3.5,0) (M) {$M$};
            \draw[->] (D) -- (S);
            \draw[->] (S) -- (rhoH1);
            \draw[->] (D) -- (rhoH2);
            \draw[->,darkgreen] (rhoH1) -- (M);
            \draw[->,darkgreen] (rhoH2) -- (M);
            \draw[<-] (D) -- ++(-0.75,0);
            \draw[->,darkgreen] (M) -- ++(0.75,0);
            \node at (5,0) {$=$};
            \begin{scope}[shift = {(6.5,0)}]
            \node at (0,0) (D) {$\Delta$};
            \node at (1,0.5) (S) {$S$};
            \node[darkgreen] at (3.25,0) (rhoH) {\fbox{$\rho_H$}};
            \node at (2,0) (M) {$M$};
            \draw[->] (D) -- (S);
            \draw[->] (S) -- (M);
            \draw[->] (D) .. controls +(0.5,-0.5) and +(-0.5,-0.5) .. (M);
            \draw[->] (M) -- (rhoH);
            \draw[<-] (D) -- ++(-0.75,0);
            \draw[->,darkgreen] (rhoH) -- ++(0.75,0);
            \end{scope}
            \node at (5,-1.5) {$=$};
            \begin{scope}[shift={(6.5,-1.5)}]
                \node at (0,0) (eps) {$\epsilon$};
                \node at (1,0) (i) {$i$};
                \node[darkgreen] at (2.25,0) (rhoH) {\fbox{$\rho_H$}};
                \draw[<-] (eps) -- ++(-0.75,0);
                \draw[->] (i) -- (rhoH);
                \draw[->,darkgreen] (rhoH) -- ++(0.75,0);
            \end{scope}
            \node at (5,-3) {$=$};
            \begin{scope}[shift={(6.5,-3)}]
                \node at (0,0) (eps) {$\epsilon$};
                \node[darkgreen] at (2.25,0) (I) {\fbox{$\mathrm{Id}_{V}$}};
                \draw[<-] (eps) -- ++(-0.75,0);
                \draw[->,darkgreen] (I) -- ++(0.75,0);
            \end{scope}
        \end{tikzpicture}
    \end{center}

    For a representative three-point move:

    \begin{center}
        \begin{tikzpicture}
            \draw[->,darkgreen,ultra thick] (-1.25,-0.5) -- (1.25,-0.5);
            \draw[->,red,ultra thick] (-0.5,1) -- (1,-1);
            \draw[->,blue,ultra thick] (0.5,1) -- (-1,-1);
            \node at (2,0) {$\leftrightarrow$};
            \begin{scope}[shift = {(4,0)}]
            \draw[->,darkgreen,ultra thick] (-1.25,-0.5) .. controls +(1,1) and +(-1,1) .. (1.25,-0.5);
            \draw[->,red,ultra thick] (-0.5,1) .. controls +(0,-1) and +(-1,0.25) .. (1,-1);
            \draw[->,blue,ultra thick] (0.5,1) .. controls +(0,-1) and +(1,0.25) .. (-1,-1);
            \end{scope}
        \end{tikzpicture}
    \end{center}

    the following tensor equality demonstrates invariance:

    \begin{center}
        \begin{tikzpicture}
            \node at (0,0) (D) {$\Delta$};
            \node at (2,0) (M) {$M$};
            \node[darkgreen] at (2,-2) (rhoHdual) {\fbox{$\rho_{H^*}$}};
            \node[darkgreen] at (0,-2) (rhoH) {\fbox{$\rho_H$}};
            \node at (0,-1) (S) {$S$};
            \node[darkgreen] at (1,-3) (Mgreen) {$M$};
            \draw[->] (D) -- (M);
            \draw[->] (D) -- (S);
            \draw[->] (S) -- (rhoH);
            \draw[<-] (M) -- (rhoHdual);
            \draw[<-] (D) -- ++(-0.75,0);
            \draw[->] (M) -- ++(0.75,0);
            \draw[->,darkgreen] (rhoH) -- (Mgreen);
            \draw[->,darkgreen] (rhoHdual) -- (Mgreen);
            \draw[->,darkgreen] (Mgreen) -- ++(0,-0.75);
            \node at (3.5,-1) {$=$};
            \begin{scope}[shift = {(5,0)}]
            \node at (0,0) (D) {$\Delta^{\mathrm{cop}}$};
            \node at (2,0) (M) {$M^{\mathrm{op}}$};
            \node[darkgreen] at (2,-2) (rhoHdual) {\fbox{$\rho_{H^*}$}};
            \node[darkgreen] at (0,-2) (rhoH) {\fbox{$\rho_H$}};
            \node at (0,-1) (S) {$S$};
            \node[darkgreen] at (1,-3) (Mgreen) {$M^{\mathrm{op}}$};
            \draw[->] (D) -- (M);
            \draw[->] (D) -- (S);
            \draw[->] (S) -- (rhoH);
            \draw[<-] (M) -- (rhoHdual);
            \draw[<-] (D) -- ++(-0.75,0);
            \draw[->] (M) -- ++(0.75,0);
            \draw[->,darkgreen] (rhoH) -- (Mgreen);
            \draw[->,darkgreen] (rhoHdual) -- (Mgreen);
            \draw[->,darkgreen] (Mgreen) -- ++(0,-0.75);
            \end{scope}
        \end{tikzpicture}
    \end{center}

    which is true by Lemma \ref{lem:doubleequalities}. One can similarly use Lemma \ref{lem:doubleequalities} to show invariance under the other versions of three-point moves.

    For a handle slide, suppose we first slide $\beta_i$ over $\beta_j$:

    \begin{center}
    \begin{tikzpicture}
        \draw[blue,ultra thick,<-] (0,1) -- (0,-1);
        \begin{scope}[shift = {(2,0)}]
            \draw[blue,ultra thick,decoration={markings, mark=at position 0.75 with {\arrow{>}}}, postaction={decorate}] (0,0) circle (1cm);
            \foreach \angle in {0,-120,120} {
            \draw[ultra thick] (\angle : 0.8) -- (\angle : 1.2);
        }
        \node at (0 : 1.5) {$b$};
        \node at (120 : 1.5) {$c$};
        \node at (-1200 : 1.5) {$a$};
        \end{scope}
        \node at (4.5,0) {$\rightarrow$};
        \begin{scope}[shift = {(7,0)}]
        \begin{scope}[shift = {(2,0)}]
        \draw[blue,ultra thick,decoration={markings, mark=at position 0.75 with {\arrow{>}}}, postaction={decorate}] (0,0) circle (0.8cm);
        \draw[blue,ultra thick] (0,0) circle (1cm);
        \foreach \angle in {0,-120,120} {
            \draw[ultra thick] (\angle : 0.6) -- (\angle : 1.2);
        }
        \node at (0 : 1.5) {$b$};
        \node at (120 : 1.5) {$c$};
        \node at (-1200 : 1.5) {$a$};
        \draw[fill=white,draw=white] (-1.2,-0.25) rectangle (-0.9,0.25);
        \draw[blue,ultra thick] (-2,-1) -- (-2,-0.25) -- (-0.95,-0.25);
        \draw[blue,ultra thick,<-] (-2,1) -- (-2,0.25) -- (-0.95,0.25);
        \end{scope}
        \end{scope}
    \end{tikzpicture}
    \end{center}

    The ``after'' picture corresponds to the following tensor:

    \begin{center}
        \begin{tikzpicture}
            \node at (0,0.5) (a) {$a$};
            \node at (0,0) (b) {$b$};
            \node at (0,-0.5) (c) {$c$};
            \node at (1,0.5) (Da) {$\Delta$};
            \node at (1,0) (Db) {$\Delta$};
            \node at (1,-0.5) (Dc) {$\Delta$};
            \node at (2.5,0.5) (MU) {$M$};
            \node at (2.5,-0.5) (ML) {$M$};
            \node at (3.5,-0.5) (mu) {$\mu$};
            \draw[->] (a) -- (Da);
            \draw[->] (b) -- (Db);
            \draw[->] (c) -- (Dc);
            \draw[->] (Da) -- (MU);
            \draw[->] (Da) -- (ML);
            \draw[->] (Db) -- (MU);
            \draw[->] (Db) -- (ML);
            \draw[->] (Dc) -- (MU);
            \draw[->] (Dc) -- (ML);
            \draw[->] (MU) -- ++(0.75,0);
            \draw[->] (ML) -- (mu);
        \end{tikzpicture}
    \end{center}

    The following equalities demonstrate invariance:

    \begin{center}
        \begin{tikzpicture}
            \node at (0,0.5) (a) {$a$};
            \node at (0,0) (b) {$b$};
            \node at (0,-0.5) (c) {$c$};
            \node at (1,0.5) (Da) {$\Delta$};
            \node at (1,0) (Db) {$\Delta$};
            \node at (1,-0.5) (Dc) {$\Delta$};
            \node at (2.5,0.5) (MU) {$M$};
            \node at (2.5,-0.5) (ML) {$M$};
            \node at (3.5,-0.5) (mu) {$\mu$};
            \draw[->] (a) -- (Da);
            \draw[->] (b) -- (Db);
            \draw[->] (c) -- (Dc);
            \draw[->] (Da) -- (MU);
            \draw[->] (Da) -- (ML);
            \draw[->] (Db) -- (MU);
            \draw[->] (Db) -- (ML);
            \draw[->] (Dc) -- (MU);
            \draw[->] (Dc) -- (ML);
            \draw[->] (MU) -- ++(0.75,0);
            \draw[->] (ML) -- (mu);
            \node at (4.5,0) {$=$};
            \begin{scope}[shift = {(5.5,0)}]
            \node at (0,0.5) (a) {$a$};
            \node at (0,0) (b) {$b$};
            \node at (0,-0.5) (c) {$c$};
            \node at (2,0) (D) {$\Delta$};
            \node at (1,0) (M) {$M$};
            \node[below right =0.25 of D] (mu) {$\mu$};
            \draw[->] (a) -- (M);
            \draw[->] (b) -- (M);
            \draw[->] (c) -- (M);
            \draw[->] (M) -- (D);
            \draw[->] (D) -- (mu);
            \draw[->] (D) -- ++(45:0.75);
            \end{scope}
            \node at (4.5,-2) {$=$};
            \begin{scope}[shift = {(5.5,-2)}]
            \node at (0,0.5) (a) {$a$};
            \node at (0,0) (b) {$b$};
            \node at (0,-0.5) (c) {$c$};
            \node at (2,0) (mu) {$\mu$};
            \node at (1,0) (M) {$M$};
            \node at (3,0) (i) {$i$};
            \draw[->] (a) -- (M);
            \draw[->] (b) -- (M);
            \draw[->] (c) -- (M);
            \draw[->] (M) -- (mu);
            \draw[->] (i) -- ++(0.75,0);
            \end{scope}
        \end{tikzpicture}
    \end{center}

    Note that if instead $L_k$ slides over a Heegaard circle we can use the fact that $\rho$ is a representation to reduce the computation to the above, as we did in proving invariance under the two-point move.

    Finally, we check stabilization. Note that destabilization is the inverse operation, so we need only check stabilization. This follows because $\mu(e) = 1$.
\end{proof}

\begin{cor}\label{cor:recoverKup}
    The extended Kuperberg invariant recovers the Kuperberg invariant (see \cite{Kup91}) when we take $L$ to be the empty link.
\end{cor}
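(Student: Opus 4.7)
The plan is simply to unwind the definition of the extended Kuperberg bracket when $L = \emptyset$ and observe that it reduces verbatim to Kuperberg's original construction. First I would note that a Heegaard-Link diagram $(\Sigma, \alpha, \beta, \emptyset)$ is by definition just a Heegaard diagram for $M$, the moves of Theorem~\ref{thm:extendedHeegaardMoves} reduce to the usual Heegaard moves (isotopy, handle slide, stabilization), and there is no framing data to track.

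Next I would enumerate the tensors that the extended construction actually assigns in this case: to each $\alpha_i$ we assign the iterated comultiplication $\Delta^{(n_i-1)} \circ e$, to each $\beta_j$ we assign $\mu \circ M^{(n_j-1)}$, and to each crossing $c_{ij} \in \alpha_i \cap \beta_j$ we contract by $S^{\eta_{ij}}$ with $\eta_{ij} \in \{0,1\}$ according to sign. Since $L$ is empty, no link-component tensor $T \circ M \circ (\rho \otimes \cdots \otimes \rho)$ is introduced, and no $\alpha$-$L$ or $\beta$-$L$ crossing tensors (the ones involving $\epsilon$ or $i$) appear. Consequently $\rho$ and $T$ play no role whatsoever in the computation.

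Finally I would observe that the tensor network one obtains is literally the tensor network used by Kuperberg in \cite{Kup91} (up to matching the orientation/sign convention at $\alpha$-$\beta$ crossings, which is immediate from the basis convention fixed just before the definition of the bracket). Contracting the identical network produces the same scalar, so $\langle (\Sigma, \alpha, \beta, \emptyset) \rangle_{H, (\rho, T)} = \langle (\Sigma, \alpha, \beta) \rangle_{Kup}$ for every Heegaard diagram, and therefore $Z_{exK}((M, \emptyset); H, (\rho, T)) = Z_{Kup}(M; H)$. I do not anticipate any real obstacle here; the only care needed is a sanity check that the sign convention for $S^{\eta_{ij}}$ at $\alpha$-$\beta$ crossings coincides with the one fixed in \cite{Kup91}, which is a direct inspection.
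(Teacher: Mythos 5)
Your proposal is correct and matches the paper's intent exactly: the paper states this corollary without proof, treating it as immediate because with $L=\emptyset$ the Heegaard-Link diagram, the moves of Theorem~\ref{thm:extendedHeegaardMoves}, and the assigned tensor network all reduce verbatim to Kuperberg's original construction. Your unwinding of the definitions is precisely the (trivial) argument being invoked.
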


\section{Relationship with the Hennings Invariant}
\label{Hennings_Sec}
Hennings \cite{hen96}, and later reformulated by Kauffman and Radford \cite{kauffman1995invariants}, defined a 3-manifold invariant from unimodular ribbon Hopf algebras where the 3-manifold is presented as a surgery link. Let $H$ be a ribbon Hopf algebra and suppose that $H$ is involutory, and thus unimodular. Let $\mu\in H^*$ be a non-zero two-sided integral. We associate a regular isotopy invariant $\langle L\rangle _{H,\mu}$ to a framed unoriented link $L$ as follows. Choose a link diagram for $L$ with respect to a height function such that the crossings are not critical points. On each component $L_i$ of $L$, pick a base point which is neither a crossing nor an extremum, and arbitrarily orient $L_i$. Define $\delta_i$ to be 0 if the orientation of $L_i$ near the base point is downwards and 1 otherwise. For a point $p$ on $L_i$ which is not an extremum, let $w_p$ be the algebraic sum of extrema between the base point and $p$, where an extremum is counted as +1 (resp. -1) if the orientation near it is counterclockwise (resp. clockwise). Equivalently, $w_p$ is 2 times the total counterclockwise rotation, in units of $1 = 360^\circ$, of the tangent of $L_i$ from the base point to $p$. Define $w_i$ to be $w_p/2$ for $p$ very close to the base point in the backward direction of $L_i$. Clearly $w_i$ is the winding number of $L_i$. Decorate each crossing with the tensor factors of the $R$-matrix for $H$ ($R = \sum_i s_i\otimes t_i$) as below.

\begin{center}
    \begin{tikzpicture}
        \draw[ultra thick] (0,-1) -- (2,1);
        \draw[line width = 10pt, white] (0,1) -- (2,-1);
        \draw[ultra thick] (0,1) -- (2,-1);
        \node at (2.5,0) {$\leftrightarrow$};
        \node at (3.5,0) {$\underset{i}{\sum}$};
        \draw[ultra thick] (4,-1) -- (6,1);
        \draw[ultra thick] (6,-1) -- (4,1);
        \draw[fill = black] (4.5,0.5) circle (2pt);
        \draw[fill = black] (5.5,0.5) circle (2pt);
        \node at (4.2,0.5) {$s_i$};
        \node at (5.8,0.5) {$t_i$};
        \begin{scope}[shift = {(9,0)}]
        \draw[ultra thick] (0,1) -- (2,-1);
        \draw[white, line width = 10pt] (0,-1) -- (2,1);
        \draw[ultra thick] (0,-1) -- (2,1);
        \node at (2.5,0) {$\leftrightarrow$};
        \node at (3.5,0) {$\underset{i}{\sum}$};
        \draw[ultra thick] (4.5,-1) -- (6.5,1);
        \draw[ultra thick] (6.5,-1) -- (4.5,1);
        \draw[fill = black] (5,-0.5) circle (2pt);
        \draw[fill = black] (6,-0.5) circle (2pt);
        \node at (4.4,-0.5) {$S(s_i)$};
        \node at (6.3,-0.5) {$t_i$};
        \end{scope}
    \end{tikzpicture}
\end{center}

Then we replace each decorating element $x$ on $L_i$ by $S^{-w_{p(x)}+\delta_i}(x)$, where $p(x)$ denotes the point on $L_i$ where $x$ is located.

Then the Hennings-Kauffman-Radford Link Invariant $\langle L\rangle _{H,\mu}$ is the evaluation of the right integral $\mu_R$ on the products along each $L_i$: $$\langle L\rangle _{H,\mu} := \sum_{(R)} \mu(q_1)\cdots \mu(q_{c(L)})$$ where $c(L)$ is the number of components of $L$, and $q_i\in H$ is the product of the decorating elements (after applying $S$ powers) on $L_i$ multiplied in the order following its orientation starting from the base point.

We may upgrade this link invariant to one of the 3-manifold obtained by surgery on $S^3$ along the framed link $L$ by introducing the following normalization factor: Let $v\in H$ be the ribbon element, and let $\omega(v)$ be a square root of $\mu(v)/\mu(v^{-1})$, then  the \emph{Hennings-Kauffman-Radford invariant} for the 3-manifold $M$ obtained by surgery on $S^3$ along the framed link $L$ is defined to be: \[Z_{\mathrm{HKR}}(M;H,\mu,\omega(v)) = (\mu(v)/\omega(v))^{-c(L)}\omega(v)^{-\text{sign}(L)}\langle L\rangle_{H,\mu}\] where $\text{sign}(L)$ is the signature of the framing matrix of $L$.

\begin{mainthm}\label{hennings_cor}
Let $H$ be an involutory Hopf algebra with nonzero two-sided integral $\mu$ and nonzero two-sided cointegral $e$ such that $\mu(e) =1$. Then for an (unoriented) framed link $L$ embedded in $S^3$, \[Z_{\mathrm{exK}}((S^3,L);H, (\rho_{\mathrm{reg}},\widetilde{T})) = Z_{\mathrm{HKR}}(S^3(L);D(H),e\otimes \mu,1)\]
\end{mainthm}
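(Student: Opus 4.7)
The plan is to start from a planar link projection of $L$ onto $S^2 \subset S^3$ and stabilize the genus-zero Heegaard surface at each crossing to obtain, via Lemma \ref{embedknotinsurface_lemma}, a Heegaard-Link diagram $E = (\Sigma, \alpha, \beta, L)$ in which each stabilization contributes a dual pair of circles $(\alpha_c, \beta_c)$ encoding the crossing, and in which the blackboard framing of $L$ agrees with the given framing after the usual longitude-traversal adjustments (one additional stabilization per component). With this choice of diagram, the tensor network computing $\langle E \rangle_{H, (\rho_{reg}, e\otimes \mu)}$ decomposes naturally into one local contribution per crossing of the original projection, plus per-component contributions from the framing-adjustment stabilizations, together with a global trace on each link component, where $\lambda^D := e\otimes \mu$ serves as the two-sided integral of the Drinfeld double (Proposition \ref{prop:doubleintegral}).

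The heart of the argument is a purely algebraic local identity identifying the contribution of one crossing-stabilization together with the two adjacent $\rho_{reg}$-boxes with the Hennings $R^D$-decoration at the corresponding crossing. Using the explicit form $R^D = \sum_i (\epsilon\otimes e_i) \otimes (e^i \otimes 1)$ of the universal $R$-matrix of $D(H)$, one shows that the cointegral $e$ fed through the comultiplication on the $\alpha_c$-side and the integral $\mu$ applied after multiplication on the $\beta_c$-side jointly produce the canonical pairing element $\sum_i e_i \otimes e^i \in H \otimes H^*$, which under the natural embeddings $H \hookrightarrow D(H)$ and $H^{*,\mathrm{cop}} \hookrightarrow D(H)$ assembles into $R^D$. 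The representation-compatibility relations of Lemma \ref{lem:doubleequalities} are then used to rewrite the local tensor in its canonical $R^D$-decorated form, and the crossing-sign powers $S^{\eta_{ij}}$ match the writhe-based $S^{-w_p + \delta_i}$ corrections in the Hennings construction once one invokes $S^2 = \mathrm{Id}$ (since $H$ is involutory).

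The main obstacle is carrying out this local identity in full rigor, which requires careful bookkeeping of arrow directions across the three pairwise crossing types ($\alpha$--$L$, $\beta$--$L$, $\alpha$--$\beta$) and repeated application of the integral/cointegral identities together with Lemma \ref{lem:doubleequalities}. Finally, the normalization on the right-hand side is verified in the involutory setting: since $D(H)$ inherits involutivity from $H$, Proposition \ref{lem:uequalsv} gives $v^D = u^D = S^D(u^D)$, hence $\lambda^D(v^D) = \lambda^D((v^D)^{-1})$ and so $\omega(v^D) = 1$ is a valid choice; the scalar $\lambda^D(v^D)^{-c(L)}$ appearing in $Z_{HKR}$ is accounted for by the per-component framing-adjustment stabilizations introduced in the Heegaard-Link construction, completing the identification.
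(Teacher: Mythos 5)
Your main computation follows the paper's own route: stabilize a planar projection at each crossing, observe that the $\alpha_c$-circle (cointegral fed through $\Delta$) and the $\beta_c$-circle (multiplication into $\mu$) combine, via the identity $\sum e_{(1)}\,\mu(e_{(2)}x)=\mu(e)\,S(x)$, into the canonical element of $H\otimes H^*$, hence into $R^D$ or its inverse acting through $\rho_{\mathrm{reg}}\otimes\rho_{\mathrm{reg}}$ on the two strands, and then use that for the regular representation the trace functional $e\otimes\mu$ evaluates the integral of $D(H)$ on the ordered product of decorations along each component. That part of the proposal is essentially the paper's argument and is sound.

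The gap is in the normalization. The prefactor in $Z_{HKR}$ is $(\mu^D(v^D)/\omega)^{-c(L)}\,\omega^{-\mathrm{sign}(L)}$, so to identify $Z_{exK}$ with $Z_{HKR}$ you must show $\mu^D(v^D)=\mu^D((v^D)^{-1})=1$. Your argument only addresses $\omega$: from $v^D=S^D(v^D)$ you infer $\lambda^D(v^D)=\lambda^D((v^D)^{-1})$, which does not follow ($S$-invariance of $u$ together with $\mu\circ S=\mu$ gives no relation between $\mu(u)$ and $\mu(u^{-1})$), and even granting it you are still left with the factor $\lambda^D(v^D)^{-c(L)}$. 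You then claim this factor is ``accounted for by the per-component framing-adjustment stabilizations,'' but a stabilization contributes exactly $\mu(e)=1$ to the bracket and cannot absorb a nontrivial scalar; moreover the prefactor in $Z_{HKR}$ is a fixed normalization, not a framing correction. The paper instead computes $\mu^D(u^D)$ directly from the definitions of $R^D$, $M^D$ and $S^D$ and shows it collapses to $\mu(e)=1$ (and likewise for $(u^D)^{-1}$); some such explicit computation is indispensable and is missing from your proposal. Relatedly, the paper avoids your per-component longitude-traversal stabilizations by simply choosing the planar diagram so that the blackboard (writhe) framing already agrees with the endowed framing; if you do insert the handles of Lemma \ref{embedknotinsurface_lemma}, the link acquires extra crossings with the new Heegaard circles whose tensors must then be evaluated (they amount to ribbon-element insertions), and that analysis is not carried out.
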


\begin{proof}
   As above, choose a diagram for $L$ in braid closure such that the blackboard framing agrees with the endowed framing. On each component of $L$, pick a base point which is neither a crossing nor an extremum, and orient $L$ so that all strands of the braid are directed downward.
   
   We now embed $L$ on a Heegaard surface for $S^3$ in the following way. Embed the link in $S^2$ and to each crossing, stabilize as in \ref{fig:resolvingcrossings}. The result is a Heegaard-Link diagram for the pair $(S^3,L)$ where the blackboard framing of $L$ matches the endowed framing. See Figure \ref{fig:resolvingcrossings}.

    \begin{figure}
        \centering
        \begin{tikzpicture}
            \draw[->,darkgreen,ultra thick] (-1,1) -- (1,-1);
            \draw[darkgreen,ultra thick] (1,1) -- (0.5,0.5);
            \draw[darkgreen,->,ultra thick] (-0.5,-0.5) -- (-1,-1);
            \draw[blue,ultra thick] (0.5,0.5) -- (-0.5,-0.5);
            \draw[draw=black, fill=lightgray] (0.5,0.5) circle (0.25);
            \draw[draw=black, fill=lightgray] (-0.5,-0.5) circle (0.25);
            \draw[draw=red,ultra thick] (-0.5,-0.5) circle (0.25);
        \end{tikzpicture}
        \caption{Stabilizing bridge at a crossing}
        \label{fig:resolvingcrossings}
    \end{figure}
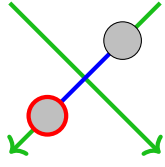

    Let $\rho = \rho_{\mathrm{reg}}$,

    \begin{center}
        \begin{tikzpicture}
            \node at (0,0) (D) {$\Delta$};
            \node at (1,0) (M) {$M$};
            \node[darkgreen] at (2,0) (rhoR) {\fbox{$\rho$}};
            \node[darkgreen] at (-1,0) (rhoL) {\fbox{$\rho$}};
            \node at (0,-1) (e) {$e$};
            \node at (1,-1) (mu) {$\mu$};
            \draw[->] (D) -- (M);
            \draw[->] (D) -- (rhoL);
            \draw[<-] (M) -- (rhoR);
            \draw[->] (e) -- (D);
            \draw[->] (M) -- (mu);
            \draw[darkgreen,->] (rhoR) -- ++(0.75,0);
            \draw[darkgreen,->] (rhoL) -- ++(-0.75,0);
            \node at (3.4,0) {$=$};
            \begin{scope}[shift = {(5,0)}]
                \node at (0,0) (S) {$S$};
                \draw[->] (S) -- ++(-0.75,0);
                \draw[<-] (S) -- ++(0.75,0);
                \node at (2,0) {$\mu(e)$};
            \end{scope}
        \end{tikzpicture}
    \end{center}

    For a positive crossing (as in Figure \ref{fig:resolvingcrossings}), the contribution to the invariant is thus an introduction of the inverse $R$-matrix $R^D$ for $D(H)$, where the first component of $R^D$ corresponds to the overcrossing. Similarly, if the crossing is negative, we introduce the $R$-matrix for $D(H)$. The introduction of the $S^{-w_{p(x)}+\delta_i}$ tensors in the HKR invariant are canceled since their only contributions occur in the cups and caps of the diagram and each cup is paired with a cap next to one another in the trace closure. 

    Now, since $\rho$ is the regular representation, it is given by $x\mapsto L_x$, or left multiplication in the Drinfeld double, by Lemma \ref{lem:traceisintegral}, we have that contracting the tensors is given by the integral $e\otimes \mu$ applied to the multiplication along the corresponding link component in the direction of orientation starting from the base point.

    All that is left to check is the normalization factor. Recall that by Proposition \ref{lem:uequalsv}, the ribbon element $v$ may be taken to be the Drinfeld element $u$. Taking $\mu^D(v)$, we obtain

    \begin{center}
        \begin{tikzpicture}
        \node at (0,0) (R) {$R$};
        \node at (0.5,0.4) {\tiny{$1$}};
        \node at (1.25,0.5) (S) {$S$};
        \node at (2,0) (M) {$M$};
        \node at (3,0) (mu) {$\mu^D$};
        \draw[->] (R) .. controls +(0.5,0.5) and +(-0.5,-0.5) .. (M);
        \draw[->] (R) .. controls +(0.5,-0.5) and +(-0.5,-0.5) .. (S);
        \draw[->] (S) -- (M);
        \draw[->] (M) -- (mu);
        \begin{scope}[shift = {(2,-2)}]
            \node at (-1,0) {$=$};
            \node at (0,1) (eps) {$\epsilon$};
            \node at (0,-1) (i) {$i$};
            \draw[] (0.75,-0.75) arc (-90:-270:0.75);
        \begin{scope}[shift = {(2,-1)}]
            \node at (0,0.25) (SUL) {$S$};
            \node at (0,-0.25) (SBL) {$S$};
            \node at (1,-0.25) (D) {$\Delta$};
            \node at (2.5,0.25) (M) {$M$};
            \node at (1.75,-0.75) (SB) {$S$};
            \draw[] (SUL) -- ++(-1.25,0);
            \draw[<-] (SBL) -- (i);
            \draw[->] (M) -- (SUL);
            \draw[->] (SBL) -- (D);
            \draw[->] (D) -- ++(0,-0.5) -- (SB);
            \draw[->] (SB) -- ++(0.75,0) -- (M);
            \draw[->] (D) -- ++(0,1) -- ++(1.5,0) -- (M);
            \draw[<-] (M) -- ++(0.75,0);
            \draw[] (D) -- ++(2.25,0) -- ++(1,1.75);
        \end{scope}
        \begin{scope}[shift = {(7,0)}]
                \node at (0,0.5) (DL) {$\Delta$};
                \node at (1,-0.5) (S) {$S$};
                \node at (2,0) (ML) {$M$};
                \node at (3.5,-0.25) (MR) {$M$};
                \node at (3.5,0.25) (DR) {$\Delta^{\mathrm{cop}}$};
                \draw[<-] (DL) -- ++(-0.75,0);
                \draw[->] (DL) -- ++(0,-1) -- (S);
                \draw[->] (DL) -- ++(2,0) -- (ML);
                \draw[->] (S) -- ++(1,0) -- (ML);
                \draw[->] (ML) -- ++(-2.25,0) -- ++(-2,1) -- (eps);
                \draw[<-] (MR) -- ++(0,-0.5) -- ++(-4.25,0) -- ++(-1.5,1) -- ++(-4,0.5);
                \draw[] (DR) -- ++(0,0.5) -- ++(-4.5,0) -- ++(-0.75,-1.5);
                \draw[->] (DR) -- (ML);
                \draw[->] (DL) -- ++(2.25,-1) -- ++(0.5,0) -- (MR);
                \node at (4.5,0.25) (e) {$e$};
                \node at (4.5,-0.25) (mu) {$\mu$};
                \draw[<-] (DR) -- (e);
                \draw[->] (MR) -- (mu);
            \end{scope}
        \end{scope}

        \begin{scope}[shift = {(2,-5)}]
            \node at (-1,0) {$=$};
            \draw[] (0.75,-0.75) arc (-90:-270:0.75);
        \begin{scope}[shift = {(2,-1)}]
            \node at (0,0.25) (SUL) {$S$};
            \node at (2.5,0.25) (M) {$M$};
            \draw[] (SUL) -- ++(-1.25,0);
            \draw[->] (M) -- (SUL);
            \draw[<-] (M) -- ++(0.75,0);
        \end{scope}
        \begin{scope}[shift = {(7,0)}]
                \node at (3.5,-0.25) (MR) {$M$};
                \node at (3.5,0.25) (DR) {$\Delta^{\mathrm{cop}}$};
                \draw[<-] (DL) -- ++(-0.75,0);
                \draw[<-] (MR) -- ++(0,-0.5) -- ++(-4.25,0) -- ++(-1.5,1) -- ++(-4,0.5);
                \draw[] (DR) -- ++(0,0.5) -- ++(-4.5,0) -- ++(-0.75,-1.5);
                \node at (4.5,0.25) (e) {$e$};
                \node at (4.5,-0.25) (mu) {$\mu$};
                \draw[<-] (DR) -- (e);
                \draw[->] (MR) -- (mu);
            \end{scope}
        \end{scope}
    \end{tikzpicture}
    \end{center}

    This is just $\mu(e) = 1$. One can similarly show that $\mu^D(v^{-1}) = 1$, which, together with the computations at crossings, yields the desired result.
\end{proof}

\begin{mainthm}
    Let $H$ be an involutory ribbon Hopf algebra with universal R-matrix $R$ and nonzero two-sided integral $\mu$ and nonzero two-sided cointegral $e$ such that $\mu(e) =1$. Define a representation $\rho_R:D(H)\to \Endo(H)$ of the Drinfeld double of $H$ by

    \begin{center}
        \begin{tikzpicture}
            \node[darkgreen] at (0,0) (rho) {\fbox{$\rho_R$}};
            \draw[<-] ([shift={(-0.4,-0.1)}]rho.center) -- ++(-0.5,0);
            \draw[->] ([shift={(-0.4,0.1)}]rho.center) -- ++(-0.5,0);
            \draw[->,darkgreen] (rho) -- ++(0.75,0);
            \node at (1.5,0) {$:=$};
            \begin{scope}[shift = {(3,0)}]
                \node at (0,0) (R) {$R$};
                \node at (0.2,0.4) {\tiny{$1$}};
                \node at (1,-0.5) (M) {$M$};
                \draw[->] (R) .. controls +(0.5,0.5) and +(1,0) .. (-0.5,0.75); 
                \draw[->] (R) -- (M);
                \draw[<-] (M) -- ++(-170:1.5);
                \draw[->] (M) -- ++(0.75,0);
            \end{scope}
        \end{tikzpicture}
    \end{center}
    
    then
    \[Z_{\mathrm{exK}}((S^3,L);H,(\rho_R,\widetilde{T})) = Z_{\mathrm{HKR}}(S^3(L);H,\mu,1)\]
\end{mainthm}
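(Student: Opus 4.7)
The plan is to mirror the proof of Theorem~\ref{hennings_cor}, with $\rho_{reg}$ replaced by $\rho_R$. First I would choose a diagram for $L$ in $S^3$ in trace-closure form, embed it on a 2-sphere, and stabilize at every crossing by the bridge of Figure~\ref{fig:resolvingcrossings}, producing a Heegaard-Link diagram whose blackboard framing matches the endowed framing. The same local analysis as in the previous theorem shows that each bridge contributes the tensor $(\rho_R \otimes \rho_R)(R^D)^{\pm 1}$ coupling the two link strands through it, with the sign depending on the sign of the crossing exactly as in Theorem~\ref{hennings_cor}.

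The heart of the argument is the algebraic identity
\[
(\rho_R \otimes \rho_R)(R^D) \;=\; (L \otimes L)(R),
\]
where $L \colon H \to \Endo(H)$ denotes left multiplication. Writing $R^D = \sum_i (\epsilon \otimes f_i) \otimes (f^i \otimes 1_H)$ for dual bases $\{f_i\} \subset H$ and $\{f^i\} \subset H^*$, one has $\rho_R(\epsilon \otimes f_i) = L_{f_i}$ because $(\epsilon \otimes \mathrm{id})(R) = 1$, and $\rho_R(f^i \otimes 1_H) = L_{\sum_j f^i(R'_j) R''_j}$ directly from the definition of $\rho_R$. The double sum then collapses via $\sum_i f^i(a) f_i = a$ to $\sum_j L_{R'_j} \otimes L_{R''_j}$, as claimed. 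In particular, at every bridge the rewriting converts the $D(H)$-R-matrix insertion into the $H$-R-matrix insertion acting by left multiplication on each strand.

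After this substitution, the contracted tensor along each link component $L_i$ becomes left multiplication by a single element $q_i \in H$, and inspection shows that $q_i$ is precisely the product of R-matrix decorations prescribed by the Hennings-Kauffman-Radford construction for $L_i$, read off in the cyclic order determined by its orientation and base point. The trace $T = e \otimes \mu$ restricts on $\Img(\rho_R) \subseteq \Endo(H)$---a space consisting only of left multiplications---to the functional $L_h \mapsto \mu(h)$, which is a legitimate trace because the involutory hypothesis forces $\mu(ab) = \mu(ba)$ on $H$; applying it yields $\mu(q_i)$, and multiplying over components reproduces $\langle L \rangle_{H, \mu}$. The $S$-power corrections at cups and caps of the HKR picture cancel in the trace closure exactly as in the proof of Theorem~\ref{hennings_cor}.

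The normalization factor $\omega(v) = 1$ is checked by an argument analogous to the final computation of Theorem~\ref{hennings_cor}, carried out in $H$ rather than $D(H)$: Proposition~\ref{lem:uequalsv} identifies $v$ with the Drinfeld element $u$, and manipulating $\mu(u)$ with the integral axioms together with $\mu(e) = 1$ gives $\mu(v) = \mu(v^{-1}) = 1$. The main obstacle I anticipate is the key identity in the second paragraph; once it is in place, the rest is careful bookkeeping that essentially parrots the earlier proof.
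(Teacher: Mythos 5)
Your proposal is correct and follows essentially the same route as the paper: the same trace-closure Heegaard--Link diagram with bridges at crossings, the same reduction of each bridge tensor to the ($\pm$) $R$-matrix of $H$ acting by left multiplication, and the same bookkeeping to recover $\langle L\rangle_{H,\mu}$. The only difference is cosmetic --- you verify the key identity $(\rho_R\otimes\rho_R)(R^D)=(L\otimes L)(R)$ algebraically with dual bases, whereas the paper carries out the equivalent computation diagrammatically.
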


\begin{proof}
    First we note that $\rho_R$ indeed forms a representation of the Drinfeld double (c.f. \cite{Rad11}).
    Using the same Heegaard-Link diagram as in the proof of Theorem \ref{hennings_cor}, we obtain the following tensor at positive link crossings. 

    \begin{center}
    \begin{tikzpicture}
            \node at (0,0) (D) {$\Delta$};
            \node at (1,0) (M) {$M$};
            \node[darkgreen] at (2,0) (rhoR) {\fbox{$\rho$}};
            \node[darkgreen] at (-1,0) (rhoL) {\fbox{$\rho$}};
            \node at (0,-1) (e) {$e$};
            \node at (1,-1) (mu) {$\mu$};
            \draw[->] (D) -- (M);
            \draw[->] (D) -- (rhoL);
            \draw[<-] (M) -- (rhoR);
            \draw[->] (e) -- (D);
            \draw[->] (M) -- (mu);
            \draw[darkgreen,->] (rhoR) -- ++(0.75,0);
            \draw[darkgreen,->] (rhoL) -- ++(-0.75,0);
    \end{tikzpicture}
    \end{center}
    
    By Lemma \ref{lem:integralequalities}, using $\rho = \rho_R$, and plugging in we have
    \begin{center}
        \begin{tikzpicture}
            \node at (0,0) (D) {$\Delta$};
            \node at (1,0) (M) {$M$};
            \node[darkgreen] at (2,0) (rhoR) {\fbox{$\rho$}};
            \node[darkgreen] at (-1,0) (rhoL) {\fbox{$\rho$}};
            \node at (0,-1) (e) {$e$};
            \node at (1,-1) (mu) {$\mu$};
            \draw[->] (D) -- (M);
            \draw[->] (D) -- (rhoL);
            \draw[<-] (M) -- (rhoR);
            \draw[->] (e) -- (D);
            \draw[->] (M) -- (mu);
            \draw[darkgreen,->] (rhoR) -- ++(0.75,0);
            \draw[darkgreen,->] (rhoL) -- ++(-0.75,0);
            \node at (3.5,0) {$=$};
            \begin{scope}[shift = {(6,0)}]
                \node[darkgreen] at (1,0) (rhoR) {\fbox{$\rho_R$}};
                \node[darkgreen] at (-1,0) (rhoL) {\fbox{$\rho_R$}};
                \node at (0,0) (S) {$S$};
                \draw[->] (rhoR) -- (S);
                \draw[->] (S) -- (rhoL);
                \draw[->,darkgreen] (rhoL) -- ++(-0.75,0);
                \draw[->,darkgreen] (rhoR) -- ++(0.75,0);
                \node at (3,0) {$\mu(e)$};
            \end{scope}
            \begin{scope}[shift = {(8,-2.5)}]
                \node at (-4.5,0) {$=$};
                \node at (0,0) (R) {$R$};
                \node at (-0.3,0.2) {\tiny{$1$}};
                \node at (1,-0.5) (M) {$M$};
                \node at (0,-0.5) (i) {$i$};
                \node at (-1,0) (S) {$S$};
                \draw[->] (R) -- (S); 
                \draw[->] (R) -- (M);
                \draw[<-] (M) -- (i);
                \draw[->] (M) -- ++(0.75,0);
            \begin{scope}[shift = {(-2,0.5)}]
                \node at (0,0) (R) {$R$};
                \node at (0,0.4) {\tiny{$1$}};
                \node at (-1,-0.5) (M) {$M^{\mathrm{op}}$};
                \node at (1,0.75) (eps) {$\epsilon$};
                \draw[->] (R) .. controls +(-0.5,0.5) and +(-1,0) .. (eps); 
                \draw[->] (R) -- (M);
                \draw[<-] (M) -- (S);
                \draw[->] (M) -- ++(-0.75,0);
            \end{scope}
            \end{scope}
            \begin{scope}[shift = {(6,-4)}]
                \node at (-2.5,0) {$=$};
                \node at (0,0) (R) {$R$};
                \node at (-0.3,0.2) {\tiny{$1$}};
                \node at (-1,0) (S) {$S$};
                \draw[->] (R) -- (S);
                \draw[->] (S) -- ++(-0.75,0);
                \draw[->] (R) -- ++(0.75,0);
            \end{scope}
        \end{tikzpicture}
    \end{center}
     
    Thus the contribution to the invariant is the inverse R-matrix with the first leg corresponding to the overcrossing as before. For the negative crossings, we obtain a contribution of the R-matrix, and thus, as in \ref{hennings_cor}, we obtain the HKR invariant of $L$ with respect to $H$.
\end{proof}

\section{Relationship with Surgery}
\label{Surgery_Sec}
Let $(M,L)$ be a pair of a closed connected oriented 3-manifold $M$ and an oriented framed link $L$ embedded in $M$. Recall that using Theorem \ref{thm:surgerydiagram} we obtain a Heegaard diagram for the surgery manifold $M(L)$. 

\begin{mainthm}\label{main_theorem}
    For a pair $(M,L)$,
    \[Z_{\mathrm{exK}}((M,L);H,(\rho_{\mathrm{reg}},\widetilde{T})) = Z_{\mathrm{exK}}((M(L),\emptyset);H,\emptyset)\]
\end{mainthm}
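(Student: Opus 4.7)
The plan is to compare the two tensor networks directly using the Heegaard diagram for $M(L)$ produced by Theorem \ref{thm:surgerydiagram}. I would start from a Heegaard-Link diagram $E=(\Sigma,\alpha,\beta,L)$ for $(M,L)$ and apply that algorithm to obtain a Heegaard diagram $E'=(\Sigma',\alpha',\beta')$ for $M(L)$. For each component $L_i$ the algorithm introduces one new $\alpha$-circle $K_\alpha^i$ (the belt of the attached $I\times S^1$) contributing the tensor $e\circ \Delta$, one new $\beta$-circle $K_\beta^i$ (replacing $L_i$) contributing $\mu\circ M$, and exactly one framing crossing between $K_\alpha^i$ and $K_\beta^i$. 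The rerouting eliminates all of the old $\beta$-$L_i$ intersections, while $K_\beta^i$ inherits precisely the original $\alpha$-$L_i$ crossings.

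Next, I would rewrite the LHS link tensor for each component. Since $\rho_{reg}:D(H)\to \Endo(D(H))$ is left multiplication and is an algebra homomorphism, and since the trace-like functional $T=e\otimes \mu$ corresponds under $\rho_{reg}$ to the two-sided integral $\lambda^D=e\otimes\mu$ of $D(H)$ (Proposition \ref{prop:doubleintegral}), the link tensor for $L_i$ with ordered crossing contributions $x_1,\dots,x_n\in D(H)$ evaluates to $\lambda^D(x_1x_2\cdots x_n)$. Each $\alpha$-$L_i$ crossing contributes a factor in the embedded copy $H\hookrightarrow D(H)$ (after the $S^\eta$ twist), while each $\beta$-$L_i$ crossing contributes a factor in $H^*\hookrightarrow D(H)$, interleaved along the cyclic order on $L_i$.

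The heart of the proof is the local identity, for each component,
\[
\lambda^D(x_1\cdots x_n)\;=\;\bigl(\text{tensor contraction contributed by } K_\alpha^i,\ K_\beta^i,\ \text{and the inherited } \alpha\text{-}L_i \text{ crossings}\bigr),
\]
where both sides are viewed as scalars depending on the unchanged ambient network. Unpacking the right-hand side, the cointegral $e$ coming from $K_\alpha^i$ produces the $H$-argument, the integral $\mu$ coming from $K_\beta^i$ produces the $H^*$-argument of the pairing $\lambda^D(f\otimes h)=f(e)\mu(h)$, and the single framing crossing of $K_\alpha^i$ with $K_\beta^i$ plays the role of the R-matrix $R^D$ that ties the two factors together. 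The proof of the identity uses Lemma \ref{lem:doubleequalities} to push every $H$/$H^*$ commutation in $M^D$ into a form compatible with the integral, together with the Section 2 integral identities (invariance of $e$ and $\mu$ under $S$, cocommutativity of $\lambda^D$, and $\mu(e)=1$) to collapse antipodes and (co)multiplications after integration.

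The main obstacle is precisely this rearrangement: the multiplication $M^D$ in $D(H)$ involves $\Delta$, $S^{-1}$, and $\Delta^{cop}$, so the literal product $x_1\cdots x_n$ is not just the tensor of its $H$ and $H^*$ parts, and on the topological side one must match the elimination of $\beta$-$L_i$ crossings (forced by the rerouting's avoidance of the meridian) with exactly the terms that collapse to units and counits after integration. Once the local identity is established at each component, the remaining portions of the two tensor networks are literally identical, so contracting everything yields $Z_{exK}((M,L);H,(\rho_{reg},e\otimes\mu)) = Z_{exK}((M(L),\emptyset);H,\emptyset)$.
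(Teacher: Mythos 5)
Your proposal is correct and takes essentially the same route as the paper: both pass to the surgery Heegaard diagram of Theorem \ref{thm:surgerydiagram}, identify the link tensor under $(\rho_{\mathrm{reg}}, e\otimes \mu)$ as the integral $\lambda^D = e\otimes\mu$ evaluated on the product of the crossing contributions in $D(H)$, and match it against the $\Delta\circ e$ and $\mu\circ M$ tensors of the new circles $K_\alpha$, $K_\beta$ by unpacking $M^D$ and applying the integral identities. The only remark worth adding is that the ``main obstacle'' you flag (the interleaving of $H$- and $H^*$-factors) is already dispatched by step 1 of Theorem \ref{thm:surgerydiagram}, which isotopes the diagram so that all $\alpha$-crossings and all $\beta$-crossings of each component are separated, so exactly one application of the straightening map in $M^D$ is needed.
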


\begin{proof}
    Without loss of generality, assume that $L$ is a knot. Then recall Figure \ref{fig:SurgeryHeegaardDiagram}:
    
    \begin{center}
    \begin{tikzpicture}
        \draw[darkgreen, ultra thick] (0,0) ellipse (1.5cm and 2cm);
        \draw[red, ultra thick] (-0.5,0) -- (-2.5,0);
        \draw[red, ultra thick] (-0.5,0.15) -- (-2.5,0.15);
        \draw[red, ultra thick] (-0.5,-0.15) -- (-2.5,-0.15);
        \draw[blue, ultra thick] (0.5,0) -- (2.5,0);
        \draw[blue, ultra thick] (0.5,0.15) -- (2.5,0.15);
        \draw[blue, ultra thick] (0.5,-0.15) -- (2.5,-0.15);
        \node at (1.5,-1.75) {$K$};
        \node at (-2,0.5) {$\alpha$};
        \node at (2,0.5) {$\beta$};
        \node at (4,0) {$\xrightarrow{\text{Surgery along }K}$};
    \begin{scope}[shift = {(8,0)}]
        \draw[blue, ultra thick] (0,0) ellipse (1.5cm and 2cm);
        \draw[red, ultra thick] (-0.5,0) -- (-2.5,0);
        \draw[red, ultra thick] (-0.5,0.15) -- (-2.5,0.15);
        \draw[red, ultra thick] (-0.5,-0.15) -- (-2.5,-0.15);
        \draw[blue, ultra thick] (0.5,0) -- (1,0);
        \draw[blue, ultra thick] (0.5,0.15) -- (1,0.15);
        \draw[blue, ultra thick] (0.5,-0.15) -- (1,-0.15);
        \draw[blue, ultra thick] (2,0) -- (2.5,0);
        \draw[blue, ultra thick] (2,0.15) -- (2.5,0.15);
        \draw[blue, ultra thick] (2,-0.15) -- (2.5,-0.15);
        \draw[draw=black, fill=lightgray, thick] (1,0) circle (0.25);
        \draw[draw=black, fill=lightgray, thick] (2,0) circle (0.25);
        \draw[red, ultra thick] (2,0) circle (0.25);
        \node at (1.5,-1.75) {$K_{\beta}$};
        \node at (-2,0.5) {$\alpha$};
        \node at (0.75,0.5) {$\beta$};
        \node at (2,0.5) {$K_\alpha$};
        \end{scope}
    \end{tikzpicture}
    \end{center}

    The left side corresponds to the tensor

    \begin{center}
        \begin{tikzpicture}
            \node at (-1,0) (M) {$M$};
            \node at (4,0) (D) {$\Delta$};
            \node[darkgreen] at (0.25,0) (rhoH) {\fbox{$\rho_H$}};
            \node[darkgreen] at (2.75,0) (rhoHdual) {\fbox{$\rho_{H^*}$}};
            \node[darkgreen] at (1.5,0) (Mgreen) {$M$};
            \node[darkgreen] at (1.5,1) (int) {$\widetilde{T}$};
            \foreach \angle in {135,180,-135} {
                \draw[<-] (M) -- ++(\angle:0.75);
            }
            \foreach \angle in {45,0,-45} {
                \draw[->] (D) -- ++(\angle:0.75);
            }
            \draw[->] (M) -- (rhoH);
            \draw[->] (rhoHdual) -- (D);
            \draw[->,darkgreen] (rhoH) -- (Mgreen);
            \draw[->,darkgreen] (rhoHdual) -- (Mgreen);
            \draw[->,darkgreen] (Mgreen) -- (int);
        \end{tikzpicture}   
    \end{center}

    By Lemma \ref{lem:traceisintegral} and that $\rho$ is a representation of the Drinfeld double, we have that plugging in the relevant structures and simplifying yields:

    \begin{center}
        \begin{tikzpicture}
            \node at (0,0.75) (M) {$M$};
            \node at (0,-0.75) (D) {$\Delta$};
            \node at (4,0) (e) {$e$};
            \node at (4,-0.5) (mu) {$\mu$};
            \foreach \angle in {135,180,-135} {
                \draw[<-] (M) -- ++(\angle:0.75);
            }
            \foreach \angle in {135,180,-135} {
                \draw[->] (D) -- ++(\angle:0.75);
            }
            \begin{scope}[shift = {(1,0)}]
                \node at (0,0.5) (DL) {$\Delta$};
                \node at (1,-0.5) (S) {$S$};
                \node at (2,0) (ML) {$M$};
                \draw[<-] (DL) -- (M);
                \draw[->] (DL) -- ++(0,-1) -- (S);
                \draw[->] (DL) -- ++(2,0) -- (ML);
                \draw[->] (S) -- ++(1,0) -- (ML);
                \draw[->] (ML) -- ++(-2.25,0) -- ++(0,-0.5) -- (D);
                \draw[->] (DL) -- ++(2.25,-1) -- (mu);
                \draw[<-] (ML) -- (e);
            \end{scope}
            \node at (5,0) {$=$};
            \begin{scope}[shift = {(6.5,0)}]
                \node at (0,0.75) (M) {$M$};
            \node at (0,-0.75) (D) {$\Delta$};
            \node at (1,-0.75) (e) {$e$};
            \node at (1,0.75) (mu) {$\mu$};
            \foreach \angle in {135,180,-135} {
                \draw[<-] (M) -- ++(\angle:0.75);
            }
            \foreach \angle in {135,180,-135} {
                \draw[->] (D) -- ++(\angle:0.75);
            }
            \draw[->] (e) -- (D);
            \draw[->] (M) -- (mu);
            \end{scope}
        \end{tikzpicture}
    \end{center}

    It is easy to see this matches the tensor for the right hand side of Figure \ref{fig:SurgeryHeegaardDiagram}.
\end{proof}

\begin{cor}
Let $H$ be a finite-dimensional involutory Hopf algebra and let $M$ be a closed connected oriented 3-manifold, then
    \[Z_{\mathrm{Kup}}(M;H,\mu) = Z_{\mathrm{HKR}}(M; D(H),\mu,1)\]
\end{cor}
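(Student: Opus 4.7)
The plan is to chain together the three major results already established in the paper. By Lickorish--Wallace, every closed connected oriented 3-manifold $M$ can be realized as surgery $S^3(L)$ on $S^3$ along some framed link $L$, so I would begin by fixing such a surgery presentation $M = S^3(L)$.

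Next, I would apply Theorem~\ref{hennings_cor} to rewrite the right-hand side
\[
Z_{HKR}(M; D(H), \mu, 1) \;=\; Z_{HKR}(S^3(L); D(H), e\otimes \mu, 1) \;=\; Z_{exK}((S^3,L); H, (\rho_{reg}, e\otimes \mu)),
\]
where I am using Proposition~\ref{prop:doubleintegral} to identify the two-sided integral $\mu$ for $D(H)$ with $e\otimes \mu$ (and noting that the normalization factor $\omega(v) = 1$ thanks to the computation $\mu^D(v) = \mu^D(v^{-1}) = 1$ carried out in the proof of Theorem~\ref{hennings_cor}).

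Then I would apply Theorem~\ref{main_theorem} to turn the link-bearing invariant into a link-free one on the surgery manifold:
\[
Z_{exK}((S^3,L); H, (\rho_{reg}, e\otimes \mu)) \;=\; Z_{exK}((S^3(L), \emptyset); H, \emptyset) \;=\; Z_{exK}((M,\emptyset); H, \emptyset).
\]
Finally, Corollary~\ref{cor:recoverKup} identifies $Z_{exK}((M,\emptyset); H, \emptyset)$ with the involutory Kuperberg invariant $Z_{Kup}(M; H, \mu)$, closing the chain.

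There is no real obstacle here: everything reduces to verifying that the inputs line up (in particular, that the integral on $D(H)$ used in Theorem~\ref{hennings_cor} matches the one implicit in the statement, which is exactly the content of Proposition~\ref{prop:doubleintegral}). The proof is essentially a one-line corollary once the three theorems of Sections~\ref{Hennings_Sec} and~\ref{Surgery_Sec} are in hand, and it gives the new proof of the involutory Kuperberg--Hennings equivalence promised in the introduction.
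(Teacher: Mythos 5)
Your proposal is correct and follows exactly the paper's own argument: present $M$ as $S^3(L)$, then chain Theorem~\ref{intro:thm2}, Theorem~\ref{intro:thm4}, and Corollary~\ref{cor:recoverKup}, with Proposition~\ref{prop:doubleintegral} supplying the identification of the integral on $D(H)$ with $e\otimes\mu$. No gaps; your version just spells out the bookkeeping the paper leaves implicit.
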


\begin{proof}
    Present $M$ as the manifold obtained by surgery on $S^3$ along some oriented framed link $L$. Then by Theorem \ref{intro:thm4}, Corollary \ref{cor:recoverKup}, and Theorem \ref{intro:thm2} we obtain the result.
\end{proof}

\section{State Sum Formulas}
\label{StateSum_Sec}
The construction outlined in Section \ref{DiagramInvariant_Sec} can be easily modified to obtain a family of invariants. Since the link components are disjoint, we could have chosen many different representations of $D(H)$ and mixed and matched across the link components. Choose some collection $\{(\rho_i, T_i)\}_{i\in I}$ of representations $\rho_i$ and trace-like functionals $T_i$ indexed over some (finite) indexing set $I$, then consider the sum (possibly with additional normalization)\[\sum_{\mathrm{col}:\{L_1,\dots ,L_m\}\to I} Z_{\mathrm{exK}}((M,L);H,\{(\rho_i,T_i)\}_{i\in \mathrm{Im}(\mathrm{col})})\] where $Z_{\mathrm{exK}}((M,L);H,\{(\rho_{\mathrm{col}(i)},T_i)\}_{i=1}^m)$ is the contraction of the tensor network obtained in the same way above except using $\rho_{\mathrm{col}(L_i)}$ for the tensor assigned to the $L_i$ component. This expression will be an invariant of the pair $(M,L)$ as usual, and in certain circumstances, recovers known invariants.

The following proposition gives a complete relationship between the extended Kuperberg invariant and the Witten-Reshetikhin-Turaev invariant.

\begin{prop}\label{prop:statesum}
    Let $(H,R)$ be a finite-dimensional involutory quasitriangular Hopf algebra and denote by $\rho_R$ the representation defined in Theorem \ref{intro:thm3}. Let $I$ be the set of irreducible representations of $H$. Then for each link $L = L_1\cup \cdots \cup L_m$ and each coloring $\mathrm{col}:\{1,\dots ,m\}\to I$, the Reshetikhin-Turaev colored link invariant $Z_{\mathrm{RT}}(L;\mathrm{col})$ (c.f. \cite{Reshetikhin1990}) is equal to the expression
    
    \[Z_{\mathrm{RT}}(L;\mathrm{col}) = Z_{\mathrm{exK}}((S^3,L);H,\{(\mathrm{col}(L_i)\circ \rho_R,\mathrm{Tr})\}_{i=1}^m)\]

    where $\mathrm{Tr}$ denotes the usual trace of linear endomorphisms on finite-dimensional vector spaces.
\end{prop}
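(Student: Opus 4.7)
The plan is to reduce the computation to the local picture used in the proof of Theorem~\ref{intro:thm3}, with the regular representation $\rho_R$ replaced by its composition $\rho_R^W$ with an irreducible representation $(W,\pi_W)$ of $H$. Explicitly, $\rho_R^W : D(H) \to \Endo(W)$ is given by $\rho_R^W(f\otimes v) = \sum f(R')\,\pi_W(R''v)$, and one first checks that this is indeed a representation of $D(H)$ by verifying the relations of Lemma~\ref{lem:doubleequalities}. This is a minor extension of the verification already carried out for $\rho_R$ itself, with $\pi_W$ applied at the output.

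Next, I would present $L$ as a braid closure embedded in a Heegaard surface for $S^3$ exactly as in the proof of Theorem~\ref{hennings_cor}, resolving each braid crossing by a stabilization bridge as in Figure~\ref{fig:resolvingcrossings}. At a positive crossing between strands colored by $W_1$ and $W_2$, the local contribution assembled from $e$, $\Delta$, $M$, $\mu$ along the bridge together with the two $\rho_R^{W_j}$ insertions simplifies in exactly the same way as in the proof of Theorem~\ref{intro:thm3}, yielding $(\pi_{W_1}\otimes \pi_{W_2})(R^{-1}) \in \Endo(W_1\otimes W_2)$, and analogously $(\pi_{W_1}\otimes \pi_{W_2})(R)$ at a negative crossing. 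Contracting with $\mathrm{Tr}$ around each closed link component then produces the quantum trace, so the full assembly recovers the standard braid-closure formula for $Z_{\mathrm{WRT}}(L;\mathrm{col})$.

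The main obstacle I expect is convention-matching. First, the positive/negative crossing prescription from Section~\ref{DiagramInvariant_Sec} and the choice of first leg of $R$ must be aligned with WRT conventions so that overcrossings correspond to the correct leg of $R$. Second, and more subtly, the HKR-style arguments of Section~\ref{Hennings_Sec} absorbed the $S^{-w_p+\delta_i}$ antipode corrections into the antipode-invariant functional $e\otimes \mu$; here $\mathrm{Tr}$ is not a priori antipode-invariant, so one must argue their cancellation directly. Since $H$ is involutory, Proposition~\ref{lem:uequalsv} forces $v=u$ and hence $\mu(v)=\mu(v^{-1})$, which makes the writhe/framing normalization trivial and, combined with the braid-closure geometry, eliminates the extra antipode factors so that the residual local contribution is genuinely the bare $R$-matrix action on $W_1\otimes W_2$.
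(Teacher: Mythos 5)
Your proposal is correct and follows essentially the same route as the paper: present $L$ as a braid closure on the Heegaard surface with crossings resolved by stabilization bridges, reduce to the local crossing computation from the proof of Theorem~\ref{intro:thm3} (now post-composed with the colors) to recover $R^{\mp 1}$ acting on $W_1\otimes W_2$, and match the closure with the categorical trace, which is the ordinary trace since $H$ is involutory. The only quibble is that your worry about the $S^{-w_p+\delta_i}$ corrections and the $\mu(v)/\mu(v^{-1})$ normalization is moot here --- those are artifacts of the HKR construction and of the $3$-manifold normalization, neither of which enters the colored link invariant or the extended Kuperberg bracket, whose link tensors depend only on the cyclic order of crossings.
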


\begin{proof}
    Let us first recall the Reshetikhin-Turaev (RT) colored link invariant. Present $L$ as the braid closure of some braid endowed with the blackboard framing. We may orient the link such that all strands in the braid are pointing down. Color the components by $\mathrm{col}$. Recall that the RT functor sends the cup and cap to the coevaluation and evaluation morphisms of $\mathrm{Rep}(H)$ and sends crossings to the braiding isomorphism (or its inverse). Since the RT colored link invariant has the properties that it is invariant under Reidemeister moves and that swapping the orientation of a link component and dualizing the color yields the same number, it is enough to only consider the case above.

    Now, the link is presented as the trace closure of a braid and as such the resulting image under the RT functor is the trace of the endomorphism defined by the image of only the braid under the same functor. Hence we need only analyze the tensor associated to a crossing since the traces on $\mathrm{Rep}(H)$ in both expressions match.

    \begin{center}
        \begin{tikzpicture}
            \draw[->,darkgreen,ultra thick] (-1,1) -- (1,-1);
            \draw[darkgreen,ultra thick] (1,1) -- (0.5,0.5);
            \draw[darkgreen,->,ultra thick] (-0.5,-0.5) -- (-1,-1);
            \draw[blue,ultra thick] (0.5,0.5) -- (-0.5,-0.5);
            \draw[draw=black, fill=lightgray] (0.5,0.5) circle (0.25);
            \draw[draw=black, fill=lightgray] (-0.5,-0.5) circle (0.25);
            \draw[draw=red,ultra thick] (-0.5,-0.5) circle (0.25);
            \node[below left] at (-1,-1) {$V$};
            \node[below right] at (1,-1) {$W$};
        \end{tikzpicture}
    \end{center}

    Note that in the RT functor (if we use the same convention as in the HKR invariant above), this picture would be sent to $c_{W,V}^{-1}$, the inverse braiding, which is given by the action of the inverse R-matrix. Computing as in the proof of Theorem \ref{intro:thm3}, we obtain the following tensor:
    \begin{center}
        \begin{tikzpicture}
            \node at (0,0) (D) {$\Delta$};
            \node at (1,0) (M) {$M$};
            \node[darkgreen] at (2.5,0) (rhoR) {\fbox{$\rho_V\circ \rho_R$}};
            \node[darkgreen] at (-1.5,0) (rhoL) {\fbox{$\rho_W\circ \rho_R$}};
            \node at (0,-1) (e) {$e$};
            \node at (1,-1) (mu) {$\mu$};
            \draw[->] (D) -- (M);
            \draw[->] (D) -- (rhoL);
            \draw[<-] (M) -- (rhoR);
            \draw[->] (e) -- (D);
            \draw[->] (M) -- (mu);
            \draw[darkgreen,->] (rhoR) -- ++(1.5,0);
            \draw[darkgreen,->] (rhoL) -- ++(-1.5,0);
            \begin{scope}[shift = {(4,-2)}]
                \node at (-4.5,0) {$=$};
                \node[darkgreen] at (1.75,0) (rhoV) {\fbox{$\rho_V$}};
                \node[darkgreen] at (-2.75,0) (rhoW) {\fbox{$\rho_W$}};
                \node[darkgreen] at (0.5,0) (rhoR) {\fbox{$\rho_R$}};
                \node[darkgreen] at (-1.5,0) (rhoL) {\fbox{$\rho_R$}};
                \node at (-0.5,0) (S) {$S$};
                \draw[->] (rhoR) -- (S);
                \draw[->] (S) -- (rhoL);
                \draw[->,darkgreen] (rhoL) -- (rhoW);
                \draw[->,darkgreen] (rhoW) -- ++(-1,0);
                \draw[->,darkgreen] (rhoR) -- (rhoV);
                \draw[->,darkgreen] (rhoV) -- ++(1,0);
                \node at (3.5,0) {$\mu(e)$};
            \end{scope}
            \begin{scope}[shift = {(4,-4.5)}]
                \node at (-4.5,0) {$=$};
                \node[darkgreen] at (3,-0.5) (rhoV) {\fbox{$\rho_V$}};
                \node[darkgreen] at (-2.75,0) (rhoW) {\fbox{$\rho_W$}};
                \draw[->,darkgreen] (rhoW) -- ++(-1,0);
                \node at (1,0) (R) {$R$};
                \node at (0.7,0.2) {\tiny{$1$}};
                \node at (2,-0.5) (M) {$M$};
                \node at (1,-0.5) (i) {$i$};
                \node at (0,0) (S) {$S$};
                \draw[->] (R) -- (S); 
                \draw[->] (R) -- (M);
                \draw[<-] (M) -- (i);
                \draw[->] (M) -- (rhoV);
                \draw[->,darkgreen] (rhoV) -- ++(1,0);
            \begin{scope}[shift = {(-0.5,0.5)}]
                \node at (0,0) (R) {$R$};
                \node at (0,0.4) {\tiny{$1$}};
                \node at (-1,-0.5) (M) {$M^{\mathrm{op}}$};
                \node at (1,0.75) (eps) {$\epsilon$};
                \draw[->] (R) .. controls +(-0.5,0.5) and +(-1,0) .. (eps); 
                \draw[->] (R) -- (M);
                \draw[<-] (M) -- (S);
                \draw[->] (M) -- (rhoW);
            \end{scope}
            \end{scope}
            \begin{scope}[shift = {(4,-6)}]
                \node at (-4.5,0) {$=$};
                \node[darkgreen] at (0.75,0) (rhoV) {\fbox{$\rho_V$}};
                \node[darkgreen] at (-2.75,0) (rhoW) {\fbox{$\rho_W$}};
                \node at (-0.5,0) (R) {$R$};
                \node at (-0.8,0.2) {\tiny{$1$}};
                \node at (-1.5,0) (S) {$S$};
                \draw[->] (R) -- (S);
                \draw[->] (S) -- (rhoW);
                \draw[->] (R) -- (rhoV);
                \draw[->,darkgreen] (rhoW) -- ++(-1,0);
                \draw[->,darkgreen] (rhoV) -- ++(1,0);
            \end{scope}
        \end{tikzpicture}
    \end{center}

    as desired. In the same vein one can show that the (non-inverse) braiding is properly implemented as well.
\end{proof}

\vspace{0.5cm}
\noindent\textbf{Acknowledgements.} The second author would like to thank Greg Kuperberg for introducing the problem and providing critical insights to him. The authors thank Eric Samperton for helpful discussions. The authors are partially supported by National Science Foundation under Award
No. 2006667 and No. 2304990.

\vspace{0.5cm}
\noindent\textbf{Data availability statement.} All data relevant to this manuscript are included within the manuscript.

\bibliographystyle{alpha}

\end{document}